\tikzset{commutative diagrams/diagrams={baseline=-2.5pt}} 
\newcommand{\chapter}
\newcommand{\quotes}[1]{\textquoteleft{#1}'}
\newcommand{\set}[1]{\{{#1}\}}
\newcommand{\complex}[1]{\ensuremath{\left\{{#1}\right\}}}
\newcommand{\cone}[1]{\ensuremath{\operatorname{Cone}\left({#1}\right)}}
\newcommand{\setconds}[2]{\ensuremath{\left\{\begin{array}{c|c} #1 & #2 \end{array}}\right\}}
\newcommand\xyhook{\ar@{^{(}->}}
\newcommand\nextcell{\pgfmatrixnextcell}
\newcommand\Z{\mathbb Z}
\newcommand\C{\mathbb C}
\newcommand\iso{\simeq}
\newcommand\id{\operatorname{id}}
\newcommand\acts{\curvearrowright}
\newcommand\into{\hookrightarrow}
\newcommand\To{\longrightarrow}
\newcommand\so{\ \ \Longrightarrow\ }
\newcommand\Hom{\operatorname{Hom}}
\newcommand\Ext{\operatorname{Ext}}
\newcommand\RHom{\operatorname{\mathbb{R}Hom}}
\newcommand\RDerived{\mathbb{R}}
\renewcommand\Im{\operatorname{Im}}
\renewcommand\P{\mathbb P}
\newcommand\PP{\mathbb P}
\newcommand{\Gr}{\mathbbm{Gr}}
\newcommand\pt{\operatorname{pt}}
\theoremstyle{plain}
\newtheorem{thm}{Theorem}
\newtheorem{prop}[thm]{Proposition}
\newtheorem{lem}[thm]{Lemma}
\newtheorem{cor}[thm]{Corollary}
\newtheorem{alg}[thm]{Algorithm}
\newtheorem{keythm}{Theorem}
\theoremstyle{definition}
\newtheorem{defn}[thm]{Definition}
\newtheorem{eg}[thm]{Example}
\newtheorem*{keydefn}{Definition}
\theoremstyle{remark}
\newtheorem{rem}[thm]{Remark}
\newtheorem*{acks}{Acknowledgements}
\newtheorem*{notn}{Notation}
\numberwithin{thm}{section}
\newcommand{\emphasis}[1]{{\em #1}}
\newcommand{\case}[1]{{\em\bf Case #1:}}
\newcommand{\cE}{\mathcal{E}}
\newcommand{\cF}{\mathcal{F}}
\newcommand{\cO}{\mathcal{O}}
\newcommand{\cS}{\mathcal{S}}
\newcommand{\cV}{\mathcal{V}}
\newcommand{\cW}{\mathcal{W}}
\newcommand{\mcF}{\mathcal{F}}
\newcommand{\mcT}{\mathcal{T}}
\newcommand{\mfF}{\mathcal{F}}
\newcommand{\mfP}{\mathcal{P}}
\newcommand{\mfQ}{\mathcal{Q}}
\newcommand{\mfR}{\mathcal{R}}
\newcommand{\mfS}{\mathcal{S}}
\newcommand{\mfT}{\mathcal{T}}
\newcommand{\mfX}{\mathfrak{X}}
\newcommand{\mfY}{\mathcal{Y}}
\newcommand{\mfZ}{\mathcal{Z}}
\newcommand{\bbS}{\mathbb{S}}
\newcommand{\Tot}{\operatorname{Tot}}
\newcommand{\Sym}{\operatorname{Sym}}
\newcommand{\Wedge}{\mbox{\large{$\wedge$}}}
\newcommand{\br}[1]{\langle#1\rangle}
\newcommand{\comp}[3]{\operatorname{Comp}_{#1}^{#2}(#3)}
\newcommand\quot{/\kern-.7ex/}
 \newcounter{enumistar}
\newcommand{\staircase}[1]{\delta_{#1}}
\newcommand{\cpt}[2]{{#1}^{#2}}
\newcommand{\bijar}[1][]{%
 \ar[#1]
 \ar@<0.7ex>@{}[#1]|-*=0[@]{\sim}} 
\newcommand{\bijarswap}[1][]{%
 \ar[#1]
 \ar@<0.7ex>@{}[#1]|-*=0[@]{\sim}} 
\newcommand{\PlainYoungDiag}[6]{

\def\xStart{#2} \def\yStart{#3}
\def\xSize{#4} \def\ySize{#5}

\draw (\xStart,\yStart) 
  rectangle (\xStart+\xSize,\yStart+\ySize);
\draw[help lines] (\xStart,\yStart) grid (\xStart+\xSize,\yStart+\ySize);

\newcounter{#1row}
\foreach \rowLength in #6 {
  \ifnum\rowLength>0 {
    \foreach \col in {1,...,\rowLength}{
      \draw[fill=gray!50] (\xStart+\col-1,\yStart+\value{#1row}) rectangle (\xStart+\col,\yStart+\value{#1row}+1) ; } }
  \fi
  \stepcounter{#1row} }
}
\newcommand{\YoungDiag}[7]{


\def\xStart{#2} \def\yStart{#3}
\def\xSize{#4} \def\ySize{#5}
\draw   node at (\xStart+\xSize/2,\yStart-0.75) ()  {#7};
\PlainYoungDiag{#1}{#2}{#3}{#4}{#5}{#6}

}
\newcommand{\AnnotatedYoungDiag}[9]{


\def\xStart{#2} \def\yStart{#3}
\def\xSize{#4} \def\ySize{#5}
\def\rhSpace{#9}
\draw   node at (\xStart-\xSize,\yStart+\ySize/2) ()  {#7};
\draw   node at (\xStart+\rhSpace/2*\xSize+3/2*\xSize,\yStart+\ySize/2) ()  {#8};
\PlainYoungDiag{#1}{#2}{#3}{#4}{#5}{#6}

}
\begin{document}

\title[Window shifts and Grassmannian twists]{Window shifts, flop equivalences and Grassmannian twists}%

\author{Will Donovan}
\address{School of Mathematics and Maxwell Institute of Mathematics, University of Edinburgh, Edinburgh, EH9 3JZ, U.K.}
\email{will.donovan@ed.ac.uk}

\author{Ed Segal}
\address{Department of Mathematics, Imperial College London,
London, SW7 2AZ, U.K.}
\email{edward.segal04@imperial.ac.uk}

\thanks{MSC 2000: Primary 14F05, 18E30; Secondary 14M15.}

\begin{abstract} 
We introduce a new class of autoequivalences that act on the derived categories of certain vector bundles over Grassmannians. These autoequivalences arise from Grassmannian flops: they generalize Seidel-Thomas spherical twists, which can be seen as arising from standard flops. We first give a simple algebraic construction, which is well-suited to explicit computations. We then give a geometric construction using spherical functors which we prove is equivalent.
\end{abstract}

\maketitle

\tableofcontents

\section{Introduction}

Derived equivalences corresponding to flops were first explored by Bondal and Orlov \cite{Bondal:1995vra}. They exhibited an equivalence of bounded derived categories of coherent sheaves corresponding to the standard flop of a projective space $\PP^{d-1}$ in a smooth algebraic variety with normal bundle $\mathcal{N}\iso\cO(-1)^{\oplus d}$ \cite[Theorem 3.6]{Bondal:1995vra}. More generally it is conjectured \cite[Conjecture 5.1]{Kawamata:2002vq} that for any flop between smooth projective varieties there exists a derived equivalence. This follows for 3-folds by work of Bridgeland \cite{Bridgeland:2000ux}, but is still an open question in higher dimensions.

Examples of flops, including the standard flop, may be obtained by variation of GIT, and in this case there is a particular approach to constructing derived equivalences. Suppose $X_+$ and $X_-$ are a pair of varieties related by a flop, and that both are possible GIT quotients of a larger space $M$ by the action of a group $G$. Then $X_+$ and $X_-$ are open substacks of the Artin stack $\mfX=[M/G]$, and there are restriction functors from $D^b(\mfX)$ to both $D^b(X_+)$ and $D^b(X_-)$. So one way to construct an equivalence between $D^b(X_+)$ and $D^b(X_-)$ is to find a subcategory inside $D^b(\mfX)$ which is equivalent to both of them. We call such a subcategory a \quotes{window}.

This technique was inspired by the physical analysis carried out by Herbst, Hori and Page in \cite{Herbst:2008wl}, and was introduced into the mathematics literature by the second author in \cite{Segal:2009tua}. Both of these papers were concerned with Landau-Ginzburg models, where the derived category is modified by a superpotential,  however the technique is still interesting when applied to ordinary derived categories.

In this paper we study a particular class of examples, which are local models of \quotes{Grassmannian flops}. For us, $X_+$ is the total space of the vector bundle
$$\Hom(V,S) \To \Gr(r,V)$$
where $S$ is the tautological subspace bundle on the Grassmannian $\Gr(r,V)$ of $r$-dimensional subspaces of a vector space $V$, where $0<r<\dim V$. This can be flopped to a second space $X_-$, which is the total space of a vector bundle over the dual Grassmannian $\Gr(V,r)$. (When $V$ is 2-dimensional, and $r=1$, this is the standard Atiyah flop.) This flop arises from a GIT problem, and we show that it is possible to find a window. In fact we find a whole set of windows, indexed by $\Z$, and hence show:
\begin{keythm}[Theorem \ref{thm.equivalences2}] For $k \in \mathbb{Z}$ there exist equivalences $$\begin{tikzcd} \psi_k : D^b(X_+) \overset{\sim}{\To} D^b(X_-). \end{tikzcd}$$\end{keythm}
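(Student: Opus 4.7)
The strategy follows the window technique outlined in the introduction. First, I would realize both $X_+$ and $X_-$ as open substacks of a common quotient stack. Take
$$M := \Hom(\C^r, V) \oplus \Hom(V, \C^r),$$
with $GL_r$ acting by $g \cdot (A, B) = (Ag^{-1}, gB)$. The two GIT chambers, corresponding to characters $\det^{\pm 1}$, impose the stability conditions \quotes{$A$ injective} on one side (yielding $X_+$, with $B$ parametrising $\Hom(V, S)$ over $\Gr(r,V)$) and \quotes{$B$ surjective} on the other (yielding $X_-$ over the dual Grassmannian). Both are then open substacks of $\mfX := [M/GL_r]$.

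Next, for each $k \in \Z$ I would define a window subcategory $\cW_k \subset D^b(\mfX)$ generated by the bundles $\Sigma^\lambda \cS$, where $\cS$ is the tautological rank-$r$ bundle on $\mfX$ coming from the defining representation of $GL_r$, and $\lambda$ runs over Young diagrams in an appropriate box shifted by $k$ — concretely, with $k \leq \lambda_r \leq \lambda_1 \leq k + (\dim V - r)$, so that the window contains exactly the number of generators of Kapranov's exceptional collection on $\Gr(r, V)$. The claim is then that each restriction functor in
$$
D^b(X_+) \overset{r_+}{\longleftarrow} \cW_k \overset{r_-}{\longrightarrow} D^b(X_-)
$$
is an equivalence, and $\psi_k := r_- \circ r_+^{-1}$ is the desired derived equivalence.

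To prove the equivalence $\cW_k \isoto D^b(X_+)$ (and symmetrically for $X_-$), there are two steps. Essential surjectivity reduces to Kapranov's result that the Schur bundles $\Sigma^\lambda S$ generate $D^b(\Gr(r, V))$, pulled back to the total space via the affine projection; the overall $k$-shift does not affect generation. Full faithfulness amounts to showing
$$ \RHom_{\mfX}\bigl(\Sigma^\mu \cS, \Sigma^\lambda \cS\bigr) \isoto \RHom_{X_+}\bigl(\Sigma^\mu \cS|_{X_+}, \Sigma^\lambda \cS|_{X_+}\bigr)$$
for all $\lambda, \mu$ in the window, i.e.\ that removing the unstable locus costs no cohomology. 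Resolving the unstable locus by a Koszul complex (its equations being the $r \times r$ minors of $A$) and applying Borel-Weil-Bott on $\Gr(r,V)$ reduces this to a combinatorial weight vanishing: each $GL_r$-weight produced by combining $\Sigma^\lambda \otimes (\Sigma^\mu)^\vee$ with Koszul contributions must land in the singular region for the Weyl dot-action.

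The main obstacle is precisely this Borel-Weil-Bott verification. The window must be narrow enough that all Koszul contributions die, yet wide enough to contain a generating set for $D^b(X_\pm)$; verifying that the natural box-shaped window strikes this balance uniformly across all pairs $\lambda, \mu$ and all multidegrees of the Koszul resolution is the combinatorial heart of the argument. Once established, the two restriction equivalences exist symmetrically on the $+$ and $-$ sides — the symmetry $\cS \leftrightarrow \cS^\vee$ interchanges the roles of $A$ and $B$ — and their composition yields $\psi_k$, with varying $k$ producing the claimed infinite family.
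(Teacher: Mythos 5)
Your setup and overall strategy match the paper exactly: realize $X_\pm$ as open substacks of the quotient stack, define windows $\cW_k$ generated by Kapranov's Schur powers twisted by $\cO(k)$, and show each restriction functor is an equivalence. You have also correctly isolated the real content as the full-faithfulness statement. Where your route genuinely diverges is in how you propose to prove it. The paper's argument is short and avoids any direct Borel--Weil--Bott computation at this step: since $\mfX$ is an affine quotient stack and the generators are vector bundles, $\RHom_{\mfX}$ is concentrated in degree $0$; by a tilting result cited from elsewhere, $\RHom_{X_+}$ is likewise concentrated in degree $0$; so one only needs to match the ordinary $\Hom$'s, and that follows from normality plus the fact that the unstable locus has codimension $d-r+1 \geq 2$ (Hartogs extension for the Hom-bundle). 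You instead propose to resolve the unstable locus by a Koszul complex on the $r\times r$ minors and push the comparison through Borel--Weil--Bott. This is closer in spirit to the Halpern-Leistner/Ballard--Favero--Katzarkov framework and is not unreasonable, but there is a real technical snag: the unstable locus is a determinantal variety of codimension $d-r+1$, cut out by $\binom{d}{r}$ minors that do \emph{not} form a regular sequence when $r>1$, so the naive Koszul complex on them is not a free resolution of its structure sheaf. To make your route rigorous one would have to pass either to the \v{C}ech complex of the minors (computing local cohomology) or to the correct Eagon--Northcott/Buchsbaum--Rim-type resolutions and then run a more delicate BWB analysis. By contrast, the paper's argument outsources all the BWB combinatorics to the previously established tilting property of $\pi^*\mcT$ on $X_+$, and closes the degree-zero gap with nothing more than a codimension count. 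Both roads lead to the theorem, but the tilting-plus-Hartogs route is considerably leaner once the tilting statement is in hand, and it avoids the regular-sequence pitfall your sketch currently glosses over.
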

The fact that there are many different choices of windows is not a surprise, as it was present in the original analysis of Herbst--Hori--Page. It has an important consequence: if we combine equivalences corresponding to different windows, we produce \emph{autoequivalences} of $D^b(X_+)$.

\begin{keydefn}[Definition \ref{defn.window-autoequivalences2}] We define {\em window-shift autoequivalences} $\omega_{k,l}$ by
$$\omega_{k,l} := \psi_k^{-1}\psi_l : D^b(X_+) \overset{\sim}{\To} D^b(X_+).$$
\end{keydefn}

Most of this paper is devoted to studying these autoequivalences, and in particular to proving that they are equivalently described by a geometric construction discovered by the first author in \cite{Donovan:2011vc}. In the case of a standard 3-fold flop this geometric construction is well-known -- the skyscraper sheaf along the flopping $\P^1$ is a spherical object, and we can get a derived autoequivalence by performing a Seidel-Thomas spherical twist \cite{Seidel:2000}. In the Grassmannian examples, the construction produces something a bit like a family spherical twist \cite{Horja:2001} but more complicated: in fact it is associated to a spherical functor \cite{Anno:2007wo} which involves a push-down by a resolution of singularities. See \cite{Donovan:2011vc} for further discussion in the case $r=2$. We show:

\begin{keythm}[Theorem \ref{theorem.grassmantwist}] There exists a natural isomorphism $$\omega_{0,1} \iso T_F$$ where $T_F$ is a twist of a spherical functor $F$ with target $D^b(X_+)$, defined in Section \ref{section.geometry}.
\end{keythm}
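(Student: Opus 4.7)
The plan is to trace $\omega_{0,1}$ explicitly through the two window equivalences and then identify the resulting functor with $T_F$ by matching a natural distinguished triangle on each side. First, I unpack the windows: each $\psi_k$ lifts $\cE \in D^b(X_+)$ to the unique representative $\widetilde{\cE}_k$ in the window $\cW_k \subset D^b(\mfX)$ and then restricts to $X_-$. So $\omega_{0,1}(\cE) = \psi_0^{-1}\psi_1(\cE)$ is obtained by lifting $\cE$ into $\cW_1$, restricting to $X_-$, re-lifting to an object $\widetilde{\cE}_0'$ in $\cW_0$, and restricting back to $X_+$. The two lifts $\widetilde{\cE}_1$ and $\widetilde{\cE}_0'$ agree on $X_-$ and therefore differ by something supported on the $X_+$-side unstable locus, carrying the single equivariant weight that distinguishes $\cW_0$ from $\cW_1$.

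Concretely, I would construct a natural distinguished triangle in $D^b(\mfX)$ relating $\widetilde{\cE}_1$ and $\widetilde{\cE}_0'$, with third term $\cK(\cE)$ supported on the unstable locus with the distinguishing weight. Restricting this triangle to $X_+$ and rotating yields a triangle of the form
\[ FR(\cE) \To \cE \To \omega_{0,1}(\cE), \]
where the object $R(\cE)$ depends functorially on $\cE$ and must be identified with the right adjoint of the spherical functor $F$ from Section~\ref{section.geometry}. Matching this triangle with the defining triangle $FR \to \id \to T_F$ of the spherical twist then produces the desired natural isomorphism $\omega_{0,1} \iso T_F$.

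The main obstacle is the identification of the "transition" in $D^b(\mfX)$ between the two windows with the counit of the spherical adjunction $F \dashv R$. Because $F$ is defined geometrically via a resolution of singularities, this requires translating an equivariant description on the unstable locus of the $X_+$-chamber into a pushforward from that resolution, likely via a derived base change. I would first verify the identification on a set of generators — natural candidates are Schur functors of the tautological bundle $S$ pulled back to $X_+$ — on which both $\omega_{0,1}$ (via the algebraic window construction of the earlier section) and $T_F$ (via the explicit kernel description coming from the resolution) admit compatible descriptions. Once the isomorphism is established on generators in a way that is compatible with the natural triangles above, functoriality and naturality promote it to an isomorphism of autoequivalences.
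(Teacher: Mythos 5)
Your high-level picture is right — $\omega_{0,1}$ lifts into $\cW_1$, passes through $X_-$, re-lifts into $\cW_0$, and the two stacky lifts must differ by something supported on the $X_+$-side unstable locus — but the decisive step, \emph{constructing} the natural distinguished triangle relating $\widetilde{\cE}_1$ and $\widetilde{\cE}_0'$, is left as a black box, and that is precisely where the real work lies. There is no a priori morphism between the two lifts; they simply happen to agree after restriction to $X_-$. The paper supplies the missing mechanism by defining a \emph{transfer functor} on the whole stack, $T_\mfF := \cone{\mfF\mfR \to \id}$, where $\mfF = j_*\pi^*$ and $\mfR = \pi_* j^!$ are built from the stacky correspondence $\mfY \leftarrow \mfZ \to \mfX$ lying over the geometric correspondence defining $F$. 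The cone construction hands you the natural map $\widetilde{\cE}_1 \to T_\mfF(\widetilde{\cE}_1)$ for free, and then one proves three things (Lemmas \ref{lemma.twisttransferproperty1}–\ref{lemma.twisttransferproperty3}): $T_\mfF$ is invisible on $X_-$, it sends $\cW_1$ into $\cW_0$ (so $T_\mfF(\widetilde{\cE}_1) = \widetilde{\cE}_0'$ by uniqueness of the lift), and $i_{X_+}^* T_\mfF$ agrees with $T_F i_{X_+}^*$ on $\cW_1$. The formal argument of Proposition \ref{prop.shiftequalstwist1} then closes the diagram.

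Your plan to ``verify the identification on generators and promote by functoriality'' also has a gap: agreement on a generating set of objects does not by itself produce a natural isomorphism of functors — you need an actual natural transformation between the two sides before you can test it for invertibility on generators. The paper produces that transformation explicitly in the proof of Lemma \ref{lemma.twisttransferproperty3}, using flat base change along the (Cartesian) right-hand square of diagram \eqref{diagram.correspondences} together with the (non-Cartesian) base-change map $\tau: i_{Y_+}^*\pi_* \to \pi_* i_Z^*$ from the left-hand square; only then is the transformation checked to be an isomorphism on the generating Schur bundles, via Proposition \ref{prop.pushdownsbypi}. You correctly identified both obstacles, but in their present form your steps (2) and (4) assume what the transfer-functor machinery is actually needed to prove.
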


Dually, we find that another window shift autoequivalence, namely $\omega_{-1,0}$, can be described in terms of a cotwist \cite{Anno:2010we} around a spherical functor with source $D^b(X_+)$: we defer a precise statement until Section \ref{section.geometry} (Theorem \ref{theorem.grassmancotwist}). As a pleasing corollary, we find that a twist and a cotwist on $D^b(X_+)$ are related (Corollary \ref{corollary.twistcotwist}).
\medskip

The physics in \cite{Herbst:2008wl} concerns B-branes in gauged linear $\sigma$-models (GLSMs). The input data for such a model consists of a vector space $M$ with an action of a group $G$, then by standard prescriptions one can build a supersymmetric gauge theory in 2 dimensions. The theory has a complex parameter $t$, called the Fayet--Iliopoulos parameter, and in certain \quotes{large-radius} limits this gauge theory reduces to a non-linear $\sigma$-model with target space given by a GIT quotient $M \quot G$: different quotients appear at different limits. The parameter $t$ becomes identified, in the limit, with the (complexified) K\"ahler class of the target space, so the space in which $t$ lives is called the Stringy K\"ahler Moduli Space (SKMS).

The B-branes in the theory form a category, which in the limit is the derived category of $M \quot G$. Furthermore when $G\subset \operatorname{SL}(M)$ this category is actually independent of $t$, so all the GIT quotients are derived equivalent. However, to produce a derived equivalence one must vary $t$ from one large-radius limit point to a different one, and in between the description of the B-branes as the derived category of a space breaks down. Herbst--Hori--Page instead study the B-branes at a different kind of limit, the \quotes{Coloumb phase} of the theory, and in doing so discover  \quotes{grade-restriction rules}, which we choose to call \quotes{windows}.

 The Coulomb phase description arises when $t$ is near certain singularities in the SKMS. Because of these singularities, when we move from one large-radius limit to another there are many homotopy classes of paths that we can choose to move along, which is  why there are many different choices of windows with different corresponding equivalences. In this picture, we see our autoequivalences as coming from monodromy of B-branes as $t$ moves along 
loops around the singularities. 

 Herbst--Hori--Page restrict to the case that $G$ is a torus, whereas in our class of examples we consider the non-abelian gauge group $U(r)$. GLSMs with non-abelian gauge groups have certainly been studied \cite{Hori:2006ug}, so we hope that our calculations of brane monodromy in these theories will be of interest to some physicists.\medskip

The plan of this paper is as follows:

\begin{itemize}
\item Section \ref{section.examples_and_heuristics} is intended to give a readable introduction to our methods, without the morass of Schur functors that arises in the general case. We describe the case of the standard flop in some detail, and provide some discussion of the simplest Grassmannian example.
\item In Section \ref{section.proofs} we give precise descriptions of all the algebraic and geometric constructions, and give the proofs that they are equivalent.
\item In Appendix \ref{appendix} we prove various technical results that are required. In particular we make extensive use of some long exact sequences on Grassmannians, and since these are non-standard we give an explicit description of them.
\end{itemize}

Whilst this paper was being completed, we learnt of the work of Halpern-Leistner \cite{HalpernLeistner:2012uha} and Ballard--Favero--Katzarkov \cite{Ballard:2012wia}, both of which apply related methods to produce derived equivalences corresponding to very general variations of GIT. We hope to treat the associated window-shift autoequivalences in future work.

\begin{acks}We would like to thank Nick Addington, Timothy Logvinenko and Richard Thomas for helpful conversations and suggestions, and also Debbie Levene for a useful observation on the combinatorics.

W.D. would like to express his gratitude for the support of Iain Gordon and EPSRC grant EP/G007632. E.S. is grateful for the support of an Imperial College Junior Research Fellowship.
\end{acks}
\section{Examples and heuristics}
\label{section.examples_and_heuristics}
\begin{notn}When discussing derived categories, functors are derived unless stated otherwise. Curly braces denote a complex of sheaves understood as an object of a derived category: an underline records the position of the degree $0$ term.
\end{notn}
\subsection{Windows and window-shifts}
\label{section.intro_window_shifts}
\subsubsection{The standard 3-fold flop}
\label{section.intro_standard_flop}

We'll start by considering the example of the standard 3-fold flop. We let $V$ be a 2-dimensional vector space over $\C$, and we let $\C^*$ act on $V$ via the vector space structure. This induces an action
$$\C^* \acts  V \oplus V^\vee.$$
We consider the two possible  GIT quotients under this action. For the first one we throw away the subspace $\{0\} \oplus V^\vee$ and get a quotient
 $$X_+ = \Tot \left(\cO(-1)^{\oplus 2}_{\P V}\right).$$
For the second one we throw away $V \oplus \{0\}$ and get
$$X_- = \Tot \left(\cO(-1)^{\oplus 2}_{\P V^\vee}\right).$$
So both $X_+$ and $X_-$ are non-compact Calabi-Yau 3-folds, and they are birational (they also happen to be isomorphic). It's well-known \cite[Theorem 3.6]{Bondal:1995vra} that $X_+$ and $X_-$ are also derived equivalent.

A particular way of viewing this derived equivalence was introduced by the second author in \cite{Segal:2009tua}, based on the work of Herbst, Hori and Page \cite{Herbst:2008wl}. What we do is view $X_+$ and $X_-$ as open substacks of the Artin stack
$$\mfX = [V\oplus V^\vee \;/\; \C^*]$$ 
and write $i_{X_+}$ and $i_{X_-}$ for the respective inclusions, as illustrated in Figure \ref{figure.quotients}. 
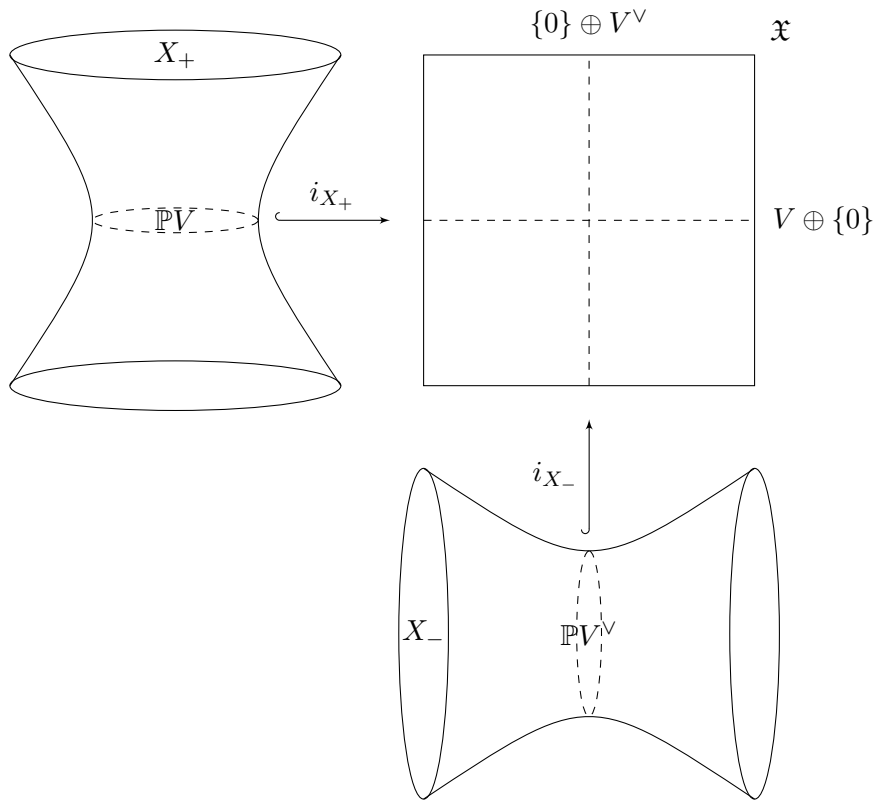
\begin{figure}[h]
\begin{center}
\begin{tikzpicture}[node distance=2.5cm,auto,>=latex',scale=1.1]

\def\quotHeight{4}
\def\quotHalfWidth{2}
\def\quotPerspective{0.15}
\def\quotZeroSectionScale{0.5}
\def\quotPinch{0.67}

\draw (3,0) rectangle (7,4)
  node (bigV) [above right=2] {$\mfX$} ; 
\draw[dashed]
(5,0)
  node (below V*) [below=5] {}
--
(5,4)
  node (V*) [above=2] {$\{0\} \oplus V^\vee$}; 

\draw[dashed]
(3,2)
  node (left of V) [left=5] {}
  node (left on V) [below right=3] {} 
--
(7,2)
  node (V) [right=3] {$V \oplus \{0\} $}
  node (right on V) [left=10] {}; 


\foreach
  \x/\y/\rotation/\name/\projection/\zeroSection/\zeroSectionLabel/\ZSNodeName/\sideZSNodeName/\sideDirection
  in
  {0/0/0/X_+/\pi/\PP V//PV/right of PV/right,
  -3/-7/90/X_-//\PP V^\vee//PV*/above PV*/above}
{
\pgftransformrotate{\rotation}
\draw (\x,\y) ellipse ({\quotHalfWidth} and {\quotHalfWidth * \quotPerspective});
\draw (\x,\y + \quotHeight) ellipse ({\quotHalfWidth} and {\quotHalfWidth * \quotPerspective}) node (X)  {$\name$};
\draw[dashed] (\x,{\y + \quotHeight / 2}) ellipse ({\quotZeroSectionScale * \quotHalfWidth} and {\quotZeroSectionScale * \quotHalfWidth * \quotPerspective})
  node (\ZSNodeName) {$\zeroSection$}
  node [\sideDirection=30] (\sideZSNodeName) {}
  node [below=10] {$\zeroSectionLabel$};

\draw ({\x - \quotHalfWidth},\y) .. controls ({\x - \quotPinch},{\y + \quotHeight / 2}) .. ({\x - \quotHalfWidth},{\y + \quotHeight});
\draw ({\x + \quotHalfWidth},\y) .. controls ({\x + \quotPinch},{\y + \quotHeight / 2}) .. ({\x + \quotHalfWidth},{\y + \quotHeight});

}



\draw [right hook->] (right of PV) to node {$i_{X_+}$} (left of V);
\draw [right hook->] (above PV*) to node {$i_{X_-}$} (below V*);

\end{tikzpicture}
\end{center}
\caption{Notation for GIT quotients $X_{\pm}$, viewed as substacks of the stack $\mfX$.}
\label{figure.quotients}
\end{figure}

On $\mfX$ we have a tautological line bundle $\cO(1)$.  We want to consider the subcategory
$$\cW_0 := \left< \cO, \cO(1) \right> \subset D^b(\mfX) $$
which is by definition split-generated by the trivial and tautological line bundles. We call this subcategory a \textit{window}. Its significance is the following:
\begin{prop}\label{prop.restrictionequivalences1} Both functors
$$i_{X_\pm}^* : \cW_0 \To D^b(X_\pm)$$
are equivalences.
\end{prop}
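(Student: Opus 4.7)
The plan is to check essential surjectivity and full faithfulness of $i_{X_+}^*$ on $\mfW_0$; the case of $i_{X_-}^*$ follows by the symmetry $V \leftrightarrow V^\vee$. Since the statement concerns two specific line bundles, I would not attempt a semiorthogonal decomposition of $D^b(\mfX)$ (which seems heavier than necessary here), but rather carry out a direct computation of $\Hom$ spaces, using that the unstable locus for $X_+$ has codimension $2$.

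For essential surjectivity, note that $X_+$ is the total space of a vector bundle over $\P V \iso \P^1$, and the tautological line bundle $\cO_{X_+}(1)$ coincides with the pullback of $\cO_{\P V}(1)$ along the projection $\pi\colon X_+ \To \P V$. Since $\pi$ is an affine morphism, every coherent sheaf on $X_+$ admits a finite resolution by sheaves of the form $\pi^*\mfG$. Combining with Beilinson's generation $D^b(\P^1) = \br{\cO, \cO(1)}$ gives that $\cO_{X_+}$ and $\cO_{X_+}(1)$ generate $D^b(X_+)$.

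For full faithfulness, it suffices to prove that the restriction
$$H^*(\mfX, \cO(w)) \To H^*(X_+, \cO(w))$$
is an isomorphism for each $w \in \{-1,0,1\}$. Since $V \oplus V^\vee$ is affine, the left-hand side is the weight-$w$ piece of $\C[x_1,x_2,y_1,y_2]$, placed in cohomological degree $0$ (where the $x_i$ have weight $+1$ and the $y_j$ have weight $-1$). For the right-hand side, writing $Z_+ = \{0\} \oplus V^\vee$ and using that $Z_+$ is cut out by the regular sequence $(x_1,x_2)$, the Koszul resolution gives $H^i_{Z_+}(\cO) = 0$ for $i < 2$ and $H^2_{Z_+}(\cO)$ concentrated in $\C^*$-weights $\leq -2$. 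Plugging into the local cohomology long exact sequence for $Z_+ \into V \oplus V^\vee$ then shows that for $w \in \{-1,0,1\}$ the restriction kills nothing and creates nothing new, so the map above is an isomorphism.

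The main obstacle is verifying that no spurious contributions appear in $H^*(X_+, \cO(w))$ from local cohomology along the unstable locus, but this reduces to the Koszul weight bound $\leq -2$, which is a direct consequence of the codimension of $Z_+$ being $2$ and its being cut out by positively-graded generators. In effect, the entire proposition is the statement that the window $\mfW_0$ has been chosen so that its weights $\{0,1\}$ fall inside the gap where local cohomology along either unstable locus cannot interfere.
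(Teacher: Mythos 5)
Your proof is correct, and it takes a noticeably different route from the paper's. The paper establishes full faithfulness by first invoking the tilting property of $\cO \oplus \cO(1)$ on $X_{\pm}$ (to get $\Ext^{>0}$-vanishing downstairs) and then uses codimension $\geq 2$ together with normality of $\mfX$ for the matching of ordinary $\Hom$'s; you instead compute the restriction map $\RHom_{\mfX} \to \RHom_{X_+}$ in all cohomological degrees at once via the local cohomology exact triangle along $Z_+$. These amount to the same underlying codimension-$2$ constraint, but packaged differently: the tilting route pushes forward to $\P^1$ and uses $H^1(\P^1, \cO(m)) = 0$ for $m \geq -1$, while your route stays on the ambient affine space and reads the allowed window $\{-1,0,1\}$ directly off the $\C^*$-weight bound on $H^2_{Z_+}(\cO)$. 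Your perspective is in fact closer in spirit to the later approaches of Halpern-Leistner and Ballard--Favero--Katzarkov cited at the end of the introduction, and it has the pedagogical advantage of making the origin of the width-$2$ window transparent; the paper's tilting-plus-normality argument (Proposition 3.4) is designed to scale to the general Grassmannian case, where the local cohomology of the determinantal unstable locus would be substantially harder to control by hand. Two small points of polish: your sign convention for the weights of the $x_i$ is flipped (coordinate functions cutting out $Z_+ = \{0\}\oplus V^\vee$ live in $V^\vee$, hence have weight $-1$, so $H^2_{Z_+}(\cO)$ sits in weights $\geq 2$ rather than $\leq -2$ -- harmless, since only the gap of size $2$ matters), and in the essential surjectivity step the claim that $\pi_*F$ yields coherent sheaves $\mfG$ needs the standard approximation of the quasi-coherent $\pi_*F$ by coherent subsheaves before truncating via smoothness of $X_+$.
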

This proposition is easy to prove: it follows rapidly (see Proposition \ref{prop.restrictionequivalences2}) from the statement that the bundle $\cO\oplus\cO(1)$ is tilting on both $X_+$ and $X_-$. This is deduced from Beilinson's theorem \cite{Beilinson:2009tf}, which says that $\cO$ and $\cO(1)$ form a full strong exceptional collection on $\P^1$. Hence we have a derived equivalence
$$\psi_0 : D^b(X_+) \overset{\sim}{\To} D^b(X_-)$$
defined as the composition
$$\begin{tikzcd}[column sep=40pt] D^b(X_+) \rar{\sim}[swap]{( i_{X_+}^*)^{-1}} & \cW_0 \rar{\sim}[swap]{i_{X_-}^*} & D^b(X_-). \end{tikzcd}$$

We can calculate the effect of this equivalence quite explicitly. Take a sheaf (or complex) $E\in D^b(X_+)$. Resolve $E$ by the bundles $\cO$ and $\cO(1)$, then this determines an extension of $E$ to an object $\cE \in\cW_0\subset D^b(\mfX)$. Now we can restrict $\cE$ to get an object in $D^b(X_-)$.

This gets more interesting when we notice that $\cW_0$ is not the only window that we could have chosen. Indeed for any $k\in\Z$ we can define
$$\cW_k := \left< \cO(k), \cO(k+1) \right> \subset D^b(\mfX) $$
and Proposition \ref{prop.restrictionequivalences1} will hold for $\cW_k$. So we have a whole set of derived equivalences $\set{\psi_k}$, according to which window we choose to pass through, and it turns out they are all distinct. If we combine them, we can produce autoequivalences
$$\omega_{k,l} := \psi_k^{-1}\psi_l: D^b(X_+) \overset{\sim}{\To} D^b(X_+).$$
We call these {\em window-shift} autoequivalences. Of course they're not independent, rather they obey the following relations:
\begin{equation}\label{equation.windowshiftrelations}\begin{gathered}\omega_{m,k}\omega_{k,l} = \omega_{m,l} \\ \omega_{k+m,l+m} = (\otimes \cO(m))\circ \omega_{k,l} \circ (\otimes \cO(-m)) \end{gathered}\end{equation}
Window-shifts can be calculated explicitly, at least in principle. As an example, let's calculate the effect of the window-shift $\omega_{-1,0}$ on the two line bundles $\cO$ and $\cO(1)$. Applying the first functor $\psi_0$ is easy, these two bundles immediately lift to $\cW_0$ so we have
$$\psi_0(\cO) = \cO, \quad\quad \psi_0(\cO(1)) = \cO(1),$$
in $D^b(X_-)$. We're adopting a particular sign convention here: since $\C^*$ is acting with weight $-1$ on $V^\vee$, it seems reasonable to declare that on $\P V^\vee$ it is the $\cO(-1)$ line bundle that has global sections, not the $\cO(1)$ line bundle. If we weren't using this convention then we'd have $\psi_0(\cO(1)) = \cO(-1)$.

To apply the second functor $\psi_{-1}^{-1}$ we have to resolve $\cO(1)$ in terms of $\cO(-1)$ and $\cO$, so that we can move back through the window $\cW_{-1}$. On $X_-$ we have an exact sequence given by
\begin{equation}\label{eqn.longeuler} \begin{tikzcd} 0 \rar & \cO(1)\otimes \det(V) \rar & \cO\otimes V \rar & \cO(-1) \rar & 0 \end{tikzcd} \end{equation}
which is the pull-up of the Euler sequence on $\P V^\vee$. Consequently, after picking a basis for $V$ we have
\begin{equation}\label{eqn.resultsofwindowshift}\omega_{-1,0}(\cO) = \cO, \quad\quad \omega_{-1,0}(\cO(1)) = \complex{ \underline{\cO^{\oplus 2}} \longrightarrow \cO(-1) }.\end{equation}
It is straightforward, but more fiddly, to compute the effect of $\omega_{-1,0}$ on $\cO(k)$ for other $k$.

\subsubsection{Grassmannian flops}
\label{section.intro_grassmannian}

The strategy given in Section \ref{section.intro_standard_flop} should lead to derived equivalences, and autoequivalences, in many more examples. In this paper, we're only going to generalize in the following way. Let $V$ now be a vector space of arbitrary dimension $d$. Also, let $S$ be another vector space with dimension $r$, where $r\leq d$. We form the Artin stack
$$\mfX^{(d,r)} = \left[\Hom(S,V)\oplus \Hom(V,S) \; /\; \operatorname{GL}(S) \right].$$
We then have two possible GIT quotients given by open substacks $X_\pm^{(d,r)}$ of $\mfX^{(d,r)}$. It is straightforward to establish (cf. \cite[Proposition 4.14]{Rica}) that one quotient $X_+^{(d,r)}$ is the locus where the map from $S$ to $V$ is full rank: it's the total space of a vector bundle over the Grassmannian $\Gr(r,V)$. Similarly $X_-^{(d,r)}$ is the total space of a vector bundle over the dual Grassmannian $\Gr(V, r)$. Note that setting $d=2$ and $r=1$ recovers the 3-fold flop. As before $X_\pm^{(d,r)}$ are non-compact Calabi-Yau \cite[Section 3.2]{Donovan:2011ua}.

To apply our strategy we first need to know a (full strong) exceptional collection on $\Gr(r,V)$: such a collection was discovered by Kapranov \cite{Kapranov:2009wf}. It consists of particular Schur powers of the tautological bundle $S$, for example in the case $d=4$, $r=2$ the exceptional collection is 
$$\set{ \cO,\, S^\vee, \,\Sym^2 S^\vee, \,\cO(1),\, S^\vee(1), \, \cO(2)} $$
where $\cO(1)=\det S^\vee$. Now we can define our windows: this same set of Schur powers determines a set of bundles on the stack $\mfX^{(d,r)}$, and we let 
$$\cW_0 \subset D^b \left(\mfX^{(d,r)}\right)$$
be the subcategory that they split-generate. To get the other windows $\cW_k$ we tensor every bundle in the collection by $\cO(k)$. The analogue of Proposition \ref{prop.restrictionequivalences1} still holds (see Proposition \ref{prop.restrictionequivalences2}), so we get equivalences
$$\psi_k:  D^b \left(X_+^{(d,r)}\right) \overset{\sim}{\To} D^b \left(X_-^{(d,r)}\right)$$
by passing through each window $\cW_k$, and  combining them we get window-shift autoequivalences
 $$\omega_{k,l}:=\psi_k^{-1}\psi_l: D^b \left(X_+^{(d,r)}\right) \overset{\sim}{\To} D^b \left(X_+^{(d,r)}\right).$$

With very little work, we have produced some novel derived autoequivalences. However the method is very algebraic, and it would be nice to have some geometric understanding of them. This is a much harder question, which we will turn to in the next section.
\medskip

Before we do that, let's present one more explicit calculation. Hopefully this will give the reader some feel for the computations that are going to arise later on in the paper. Let's look at the effect of the window-shift $\omega_{-1,0}$, as we did before, but this time let's do it in the case $d=4$, $r=2$. 
  As before let's make life easy by only looking at the effect of $\omega_{-1,0}$ on the generating bundles for $\cW_0$. Then we have immediately that
 \[\psi_0 (\cO) = \cO, \quad\quad \psi_0 (S^\vee) = S^\vee,  \quad\quad\ldots, \quad\quad\psi_0(\cO(2)) = \cO(2).\] 
The bundles $\cO$, $S^\vee$ and $\cO(1)$ also lie in the generating set for the window $\cW_{-1}$, so applying $\psi_{-1}^{-1}$ to them is easy, and we have 
\[\omega_{-1,0} (\cO) = \cO, \quad\quad \omega_{-1,0} (S^\vee) = S^\vee, \quad\quad \omega_{-1,0} (\cO(1)) = \cO(1).\] 
Obviously this is a general phenomenon: $\omega_{k,l}$ fixes any bundles that lie in the generating sets for both $\cW_k$ and $\cW_l$.  

Now let's calculate the effect of $\omega_{-1,0}$ on $\Sym^2 S^\vee$. To apply $\psi_{-1}^{-1}$ we have to resolve $\Sym^2 S^\vee$ in terms of the window $\cW_{-1}$. It turns out that there is an exact sequence on $X_{-}^{(4,2)}$ given by
\begin{equation}\label{eqn.G24complex1} \begin{tikzcd}[column sep=25pt] 0 \rar & \Sym^2 S^\vee \otimes \wedge^4 V \rar & S^\vee \otimes \wedge^3 V \rar & \cO \otimes \wedge^2 V \rar & \cO(-1) \rar & 0 \end{tikzcd} \end{equation}
which is the pull-up from $\Gr(4,2)$ of (a twist of) an Eagon-Northcott complex \cite{Eagon:1962ua} (see Example \ref{eg.eagon-northcott}). Hence, after picking a basis for $V$ again, we have:
 \[\omega_{-1,0}(\Sym^2 S^\vee) =  \complex{  \underline{S^{\vee \oplus 4}} \longrightarrow  \cO^{\oplus 6} \longrightarrow  \cO(-1) }.\]
To calculate $\omega_{-1,0}(S^\vee(1))$ we use the exact sequence 
\begin{equation}\label{eqn.G24complex2} \begin{tikzcd} 0 \rar & S^\vee (1) \otimes \wedge^4 V \rar & \cO(1) \otimes \wedge^3 V \rar & \cO \otimes V \rar & S^\vee (-1) \rar & 0 \end{tikzcd} \end{equation}
which is the pull-up from  $\Gr(4,2)$ of (a twist of) a Buchsbaum-Rim complex \cite{Buchsbaum:1964uq} (see Example \ref{eg.buchsbaum-rim}). Then
\[\omega_{-1,0}(S^\vee (1)) = \complex{  \underline{\cO(1)^{\oplus 4}} \longrightarrow  \cO^{\oplus 4} \longrightarrow  S^\vee(-1)}.\]
The calculation for $\omega_{-1,0}(\cO(2))$ requires a third sort of \quotes{generalised Koszul complex}: it's the complex denoted $\mathcal{C}^2$ in \cite[Appendix A.2]{Eisenbud:1994vv}. Pulling it up to $X_-^{(4,2)}$ and twisting we get
\begin{equation} \label{eqn.G24complex3} \begin{tikzcd}[column sep=22pt] 0 \rar & \cO(2) \otimes \wedge^4 V \rar & \cO(1) \otimes \wedge^3 V \rar & S^\vee \otimes V \rar & \Sym^2 S^\vee (-1) \rar & 0 \end{tikzcd} \end{equation}
so
\[\omega_{-1,0}(\cO(2)) = \complex{ \underline{\cO(1)^{\oplus 4}} \longrightarrow  S^{\vee \oplus 4} \longrightarrow  \Sym^2 S^\vee(-1)}.\]
 Evidently to do these calculations in general we'd need to know a lot of exact sequences on Grassmannians. In fact for $r=2$ the complexes $\mathcal{C}^i$ in {\it loc. cit.} suffice, but for higher $r$ we need generalizations. We'll return to this point later.

\begin{figure}[H]

\newcommand\triangleNodes[9]{
\def\factor{#1}
\def\xshift{#2}
\node ({#3}Schur22) at (-4 * \factor + \xshift, 0) {#4};
\node (above{#3}Schur22) at (-4 * \factor + \xshift, 1) {};
\node ({#3}Schur11) at (-2 * \factor + \xshift, 0) {#5};
\node ({#3}Schur00) at (0 + \xshift,0) {#6};
\node (above{#3}Schur00) at (0 + \xshift, 1) {};
\node ({#3}Schur21) at (-3 * \factor + \xshift, 1 * \factor) {#7};
\node ({#3}Schur10) at (-1 * \factor + \xshift, 1 * \factor) {#8};
\node ({#3}Schur20) at (-2 * \factor + \xshift, 2 * \factor) {#9};
\draw[gray] (0 + \xshift,0) -- (-4 * \factor + \xshift, 0) -- (-2 * \factor + \xshift, 2 * \factor) -- cycle;
}

\newcommand\twoTriangleNodes[6]{
\def\factor{#1}
\def\xshift{#2}
\node ({#3}Schur33) at (-6 * \factor + \xshift, 0) {#4};
\node ({#3}Schur32) at (-5 * \factor + \xshift, 1 * \factor) {#5};
\node ({#3}Schur31) at (-4 * \factor + \xshift, 2 * \factor) {#6};
\draw[gray] (-2 * \factor + \xshift, 0) -- (-6 * \factor + \xshift, 0) -- (-4 * \factor + \xshift, 2 * \factor) -- cycle;
}

\newcommand\twoWindowNodes[1]{
\triangleNodes{#1}{0}{window}{$\cO(1)$}{$\cO$}{$\cO(-1)$}{$S^\vee$}{$S^\vee(-1)$}{$\Sym^2 S^\vee(-1)$}
\twoTriangleNodes{#1}{0}{window}{$\cO(2)$}{$S^\vee(1)$}{$\Sym^2 S^\vee$}
}

\begin{center}
\begin{tikzpicture}
\twoWindowNodes{1}

\node (W0) at (-7,2) {$\cW_0$};
\node (W-1) at (1,2) {$\cW_{-1}$};
\draw [thick,->] (W0) -- ({window}Schur32);
\draw [thick,->] (W-1) -- ({window}Schur10);

\end{tikzpicture}
\end{center}
\caption{Windows used in calculation of window-shift $\omega_{-1,0}$ for Grassmannian example $d=4$, $r=2$.}
\end{figure}

\subsection{Spherical twists}\label{section.intro_sphericaltwists}

Let's return to the example of the 3-fold flop. We have our 3-fold $X_+ = X_+^{(2,1)}$, and we may consider the window-shift autoequivalence
$$\omega_{0,1}: \begin{tikzcd}D^b(X_+) \rar{\sim} & \cW_1 \rar{\sim} & D^b(X_-) \rar{\sim} & \cW_0 \rar{\sim} & D^b(X_+).\end{tikzcd}$$
Observe that the zero section $\P V$ inside $X_+$ is precisely the locus that becomes unstable when we pass to the other GIT quotient $X_-$.  Away from $\P V$ the two quotients are isomorphic, and the equivalences $\psi_k$ are just the identity, so the effect of the window-shift is concentrated along $\P V$. It was argued (somewhat imprecisely) in \cite{Segal:2009tua} that $\omega_{0,1}$ is in fact a Seidel-Thomas spherical twist \cite{Seidel:2000} around the spherical object
$$\cO_{\P V} \in D^b(X_+).$$
This result was already folklore, at least in the physics literature. To define this spherical twist, we consider $\P V$ as a correspondence:
\begin{equation}\label{eqn.basiccorrespondence1}\begin{tikzcd}  & \P V \drar{j}  \dlar[swap]{\pi} & \\  \pt & & X_+\end{tikzcd} \end{equation}
Then we have a functor
$$F =j_*\pi^*:  D^b(\pt) \To D^b(X_+), $$
and its right adjoint
$$R= \pi_*j^!: D^b(X_+) \To D^b(\pt).$$
The adjunction gives a natural transformation
$$ j_*\pi^*\pi_* j^! \To \id,$$
and the \textit{spherical twist} 
$$T_{F} : D^b(X_+) \To D^b(X_+)$$
is the cone on this natural transformation. It's immediate that
$$T_{F}(E) = \cone{  \Hom(\cO_{\P V}, E)\otimes \cO_{\P V} \To E }$$
which is perhaps a more standard definition (but of course we are anticipating a generalization). 
\begin{prop}\label{prop.shiftequalstwist1}
The window-shift $\omega_{0,1}$ and the spherical twist $T_{F}$ coincide.
\end{prop}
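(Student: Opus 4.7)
The plan is to establish $\omega_{0,1} \iso T_F$ by computing both autoequivalences on a split-generating pair in $D^b(X_+)$ and matching the natural transformations that exhibit them as cones.

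I would begin with the observation that both functors are naturally isomorphic to the identity on the open subvariety $U = X_+ \setminus \PP V$. For $T_F$ this is immediate, since $FR(E) = \RHom(\cO_{\PP V}, E) \otimes \cO_{\PP V}$ is supported on $\PP V$. For $\omega_{0,1}$ it follows because $U$ is also an open substack of $X_-$, and the window equivalences $\psi_0, \psi_1$ are both given by restriction from $\mfX$, so they coincide with the identity on the common open set. Consequently the cone on the comparison map with $\id$ is supported on $\PP V$ in both cases, and it suffices to compare the functors on a split-generator whose value they actually modify.

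For the main calculation I would take the pair $\{\cO, \cO(1)\}$, which split-generates $D^b(X_+)$ by the tilting property underlying Proposition \ref{prop.restrictionequivalences1}. Since $\cO(1)$ lies in both $\cW_0$ and $\cW_1$, we have $\omega_{0,1}(\cO(1)) = \cO(1)$; and $\RHom(\cO_{\PP V}, \cO(1)) = 0$ by Serre duality on the Calabi--Yau 3-fold $X_+$ together with $H^\ast(\PP V, \cO(-1)) = 0$, so also $T_F(\cO(1)) = \cO(1)$. For $\cO$, Serre duality gives $\RHom(\cO_{\PP V}, \cO) \iso \C[-3]$, and hence $T_F(\cO) \iso \cone{\cO_{\PP V}[-3] \To \cO}$. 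To compute $\omega_{0,1}(\cO)$, I would resolve $\cO$ through $\cW_1$ using the twisted Euler sequence $0 \to \cO \to V \otimes \cO(1) \to \cO(2) \to 0$ on $X_+$, producing the lift $\complex{\underline{V \otimes \cO(1)} \to \cO(2)}$ in $\cW_1$. Restricting this complex to $X_-$ and then expressing the result back in $\cW_0$ via (a twist of) the dual Euler sequence \eqref{eqn.longeuler} yields an explicit four-term complex of line bundles on $X_+$; after cancellation this should coincide with the Koszul resolution of $\cO_{\PP V}$ concatenated with the natural surjection $\cO \to \cO_{\PP V}$, exhibiting $\omega_{0,1}(\cO) \iso T_F(\cO)$.

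To promote these object-level identifications to a natural isomorphism of functors, I would match the natural transformation $\id \To T_F$ coming from the spherical twist triangle $FR \To \id \To T_F$ with a corresponding transformation attached to $\omega_{0,1}$; the latter is read off from the canonical maps between the window resolutions on generators, and its cone is supported on $\PP V$ in both cases. The main technical obstacle is the concrete identification of the four-term complex obtained from the iterated window resolution with the shifted Koszul complex of $\PP V \subset X_+$; this requires careful bookkeeping with differentials on $X_-$ under the sign convention that $\cO(1)|_{X_-}$ has no global sections, and this is where the calculation is most delicate.
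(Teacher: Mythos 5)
Your plan diverges substantially from the paper's proof, and the divergence creates a gap that your proposal does not close. The paper does \emph{not} compute $\omega_{0,1}$ and $T_F$ on a split-generating pair in $D^b(X_+)$ and then try to match them. Instead, it lifts the whole problem to the stack $\mfX$: one constructs a \emph{transfer functor} $T_\cF = \cone{j_*\pi^*\pi_* j^! \to \id}$ on $D^b(\mfX)$ (the exact analogue of $T_F$ for the correspondence \eqref{eqn.basiccorrespondencestack}) and checks three properties: that $i_{X_-}^* T_\cF = i_{X_-}^*$; that $T_\cF(\cW_1) \subset \cW_0$; and that $i_{X_+}^* T_\cF = T_F\, i_{X_+}^*$ on $\cW_1$. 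Once these are established, the identity $\omega_{0,1} \iso T_F$ is a formal diagram chase, and crucially the \emph{natural isomorphism} between the functors comes for free from the commutativity of that diagram. The paper's checks on generators ($\cO(1)$, $\cO(2)$, the $\cW_1$ generators, not $\cO$ and $\cO(1)$) are done only to verify property (ii); they are not used to build the natural isomorphism.

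By contrast, your proposal stops at the object level. Knowing $\omega_{0,1}(\cO) \iso T_F(\cO)$ and $\omega_{0,1}(\cO(1)) \iso T_F(\cO(1))$ does \emph{not} give a natural isomorphism of functors between triangulated categories — agreement on split-generators is insufficient, and an argument is needed to exhibit a natural transformation which is an isomorphism. Your proposed remedy (``match the natural transformation $\id \to T_F$ with a corresponding transformation attached to $\omega_{0,1}$, read off from the canonical maps between the window resolutions on generators'') is precisely the missing ingredient, and as stated it is not a construction: there is no a priori ``natural transformation attached to $\omega_{0,1}$'' visible from the definition $\psi_0^{-1}\psi_1$. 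The transfer functor is exactly the device that supplies this natural transformation, and it fundamentally requires working on $\mfX$ rather than on $X_\pm$. So the central idea you are missing is to go up to the stack.

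Two smaller points on the computation itself. First, when you lift $\cO$ into $\cW_1$ via the twisted Euler sequence and restrict to $X_-$, the resulting complex is not $\cO$ on $X_-$ — the map $V\otimes\cO(1)\to\cO(2)$ drops rank along the zero section of $X_-$ — so ``the result'' you are re-expressing in $\cW_0$ is already a nontrivial complex; this should be stated explicitly. Second, substituting the twisted dual Euler sequence for $\cO(2)$ gives a \emph{three}-term complex $\complex{\underline{V\otimes\cO(1)} \to V\otimes\cO(1) \to \cO}$, not a four-term one, and identifying this with $\cone{\cO_{\PP V}[-3]\to\cO}$ requires pinning down the chain-level representative of the nonzero class in $\Ext^3(\cO_{\PP V}, \cO)$; you flag this as delicate, and I agree — it is nontrivial rather than routine.
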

This is a special case of our later Theorem \ref{theorem.grassmantwist}, but we'll sketch the proof here. Suppose we wanted to compute the effect of the window-shift $\omega_{0,1}$ on some object $E\in D^b(X_+)$. Firstly we resolve $E$ by the bundles in $\cW_1$, then we can apply $\psi_1$ and get an object $\psi_1 E \in D^b(X_-)$. Secondly we need to rewrite $\psi_1 E$ in terms of the other window $\cW_0$, then we can apply $\psi_0^{-1}$ and bring it back to $D^b(X_+)$. The key idea of the proof is to find an endofunctor
$$T_{\cF}:  D^b(\mfX) \To D^b(\mfX)  $$
on the stack $\mfX$ that carries out this second step of the window-shift, i.e. it rewrites objects from $\cW_1$ in terms of $\cW_0$. Then we need to know that on $X_+$ the functor $T_{\cF}$ acts as  the spherical twist. Specifically, we want a functor that has the following three properties:
\begin{enumerate}
\item The effect of $T_{\cF}$ is concentrated along the locus $V\oplus \set{0}$, so it acts as the identity on $X_-$. More precisely, we want
 $$i^*_{X_-} T_{\cF} = i^*_{X_-}.$$
In fact it is enough that this equality holds on the subcategory $\cW_1$.
\item $T_{\cF}$ maps the window $\cW_1$ to the window $\cW_0$. 
\item When we restrict $T_{\cF}$ to $X_+$ it acts as the spherical twist $T_F$, i.e. the diagram
 $$\begin{tikzcd}
D^b(\mfX) \rar{T_\cF} \dar[swap]{i_{X_+}^*} & D^b(\mfX) \dar{i_{X_+}^*} \\ 
 D^b(X_+) \rar{T_F} & D^b(X_+) 
\end{tikzcd}$$
commutes. Again, it's actually enough that the diagram commutes when we restrict to the subcategory $\cW_1$. 
\end{enumerate}
We call a functor with these properties a \textit{transfer functor}, since it transfers between windows.
\medskip

If we have a transfer functor $T_\cF$, then the proof of Proposition \ref{prop.shiftequalstwist1} is an immediate formality:

\begin{proof}[Proof of Proposition \ref{prop.shiftequalstwist1}] 
Using property (ii), we have a diagram
\begin{center}
\begin{tikzcd}[column sep=35pt, row sep=35pt]
   \cW_1 \ar{rr}{T_\cF} \ar{dd}[swap]{i_{X_+}^*}[above,sloped]{\sim} \ar{dr}{i_{X_-}^*}[below,sloped]{\sim} & & \cW_{0} \ar{dd}{i_{X_+}^*}[below,sloped]{\sim} \ar{dl}[swap]{i_{X_-}^*}[below,sloped]{\sim} \\
  & D^b(X_-) & \\ 
  D^b(X_+) \ar{rr}{T_F} \ar{ur}{\psi_1}[below,sloped]{\sim} & & D^b(X_+) \ar{ul}[swap]{\psi_0}[below,sloped]{\sim}
\end{tikzcd}
\end{center}
The left- and right-hand triangles commute by definition, and the top triangle and the outer square both commute by properties (i) and (iii). Noting that the left-hand side of the outer square is an isomorphism, we then see that the bottom triangle commutes.
\end{proof}

 It's not difficult to guess what the transfer functor $T_\cF$ is: it's the exact analogue of $T_F$ for the stack $\mfX$. We consider the correspondence
\begin{equation}\label{eqn.basiccorrespondencestack}\begin{tikzcd} & {[V\,/\,\C^*]} \drar{j}  \dlar[swap]{\pi} & \\  \pt & & \mfX\end{tikzcd} \end{equation}
then $T_\cF$ is the cone
$$\cone{ j_*\pi^*\pi_* j^! \To \id } : D^b(\mfX) \To D^b(\mfX).$$
Now it's just a matter of checking the properties (i)--(iii), but as this gets rather involved in the general case it's probably worth saying a few words about it here. 
\begin{enumerate}
\item This property is obvious from the definition.
\item We need to calculate $T_\cF(\cO(1))$ and $T_\cF(\cO(2))$ and check that they both end up in $\cW_0$. Firstly, the relative canonical bundle $K_j$ of $j$ is $\cO(-2)$, and its relative dimension is $-2$. So
$$j^!(\cO(1)) = K_j\otimes j^*(\cO(1))[\dim j] = \cO(-1) [-2].$$
Hence $\pi_*j^!(\cO(1)) = 0$, but then $T_\cF(\cO(1)) = \cO(1)$, and this is indeed in $\cW_0$. 

The calculation of $T_\cF(\cO(2))$ is a little more complicated. We have
$$j^!(\cO(2)) = \cO[-2]$$
so $\pi_*j^!(\cO(2)) = \cO_{\pt}[-2]$, and
$$j_*\pi^*\pi_*j^!(\cO(2)) = \cO_V[-2]  \in D^b(\mfX).$$ 
We know from the adjunction that there is supposed to be a natural map
\begin{equation} \label{eqn.adjinbasiccase} \cO_V[-2] \To \cO(2). \end{equation}
To see it explicitly, we need to use the Koszul resolution of $\cO_V$ given by
\begin{equation}\label{eqn.basicfreeresolution1}\begin{tikzcd} 0 \rar & \cO(2) \rar & \cO(1)^{\oplus 2} \rar & \cO \rar & \cO_V \rar & 0. \end{tikzcd}\end{equation}
When we take the cone on \eqref{eqn.adjinbasiccase} the two copies of $\cO(2)$ cancel out, and the result is quasi-isomorphic to the complex
$$\complex{\underline{\cO(1)^{\oplus 2}} \To \cO}.$$
This is in $\cW_0$, as required.

\item This property is not surprising given that the definitions of $T_F$ and $T_\cF$ are so closely related, but there is something to check. The issue is that calculating $\pi_*$ from the space $\P V$ can give a different answer than if we calculate it from the stack $[V\,/\,\C^*]$, because on $\P V$ sheaves can have higher cohomology. However the two bundles $\cO(-1)$ and $\cO$ have no higher cohomology, which means that the functors  $T_F i_{X_+}^*$ and $i_{X_+}^* T_\cF$ give the same results when we restrict to the window $\cW_1$. They are not however the same on the whole of $D^b(\mfX)$.

\end{enumerate}

\subsection{Spherical cotwists}\label{section.intro_sphericalcotwists}

Now let's look for a geometric interpretation for the window-shift autoequivalence
$$\omega_{-1, 0}: D^b(X_+) \overset{\sim}{\To} D^b(X_+).$$
Up to tensoring with $\cO(1)$ this is inverse to $\omega_{0,1}$, by the relations \eqref{equation.windowshiftrelations}. The relevant geometrical functor is an example of an (inverse) \emph{spherical cotwist}.

We need to use the natural map
$$X_+ \To \Hom(V,V) \iso \C^4$$
which contracts the zero section and  has a 3-fold ordinary double point $\Im(X_+)$ as its image. To maintain symmetry with Section \ref{section.intro_sphericaltwists} (and the general case which we'll meet later), we're going to write this as a correspondence as follows:
\begin{equation}\label{eqn.basiccorrespondence2}\begin{tikzcd}  & X_+ \drar{j}  \dlar[swap]{\id} &[-15pt] \\  X_+ & & \Hom(V,V)\end{tikzcd} \end{equation}
Then analogously we have a  functor
$$F= j_* :  D^b(X_+) \To D^b(\Hom(V,V)).$$
This time we're going to use its {\em left} adjoint, which is 
$$L= j^*:  D^b(\Hom(V,V))\To D^b(X_+),$$
and form the cone
$$\cone{ j^*j_* \To \id}:  D^b(X_+) \To D^b(X_+).$$
\begin{prop}\label{prop.shiftequalscotwist1}
The window-shift $\omega_{-1,0}$ is equal to the shifted cone
$$\cone{j^*j_* \To \id}\,[-2].$$
\end{prop}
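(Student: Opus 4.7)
The plan is to mirror the proof of Proposition \ref{prop.shiftequalstwist1}, now employing left adjoints in place of right adjoints as befits a cotwist. I would construct a stack-level \emph{transfer functor} $T_\cF : D^b(\mfX) \to D^b(\mfX)$ satisfying three analogues of properties (i)--(iii) of Section \ref{section.intro_sphericaltwists}, for the pair of windows $(\cW_0, \cW_{-1})$: (i) $i_{X_-}^* T_\cF \iso i_{X_-}^*$ when restricted to $\cW_0$; (ii) $T_\cF(\cW_0) \subseteq \cW_{-1}$; (iii) $i_{X_+}^* T_\cF \iso \cone{j^*j_* \To \id}\,[-2] \circ i_{X_+}^*$ when restricted to $\cW_0$. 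Given these, the same diagram chase as in Proposition \ref{prop.shiftequalstwist1} (with the top arrow now the shifted cone) identifies $\omega_{-1,0} = \psi_{-1}^{-1}\psi_0$ with the cotwist on $X_+$.

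The natural candidate for $T_\cF$ lifts the correspondence \eqref{eqn.basiccorrespondence2} to the stack: let $\bar\jmath : \mfX \to \Hom(V,V)$ be the natural map induced by $(v,\eta) \mapsto v \otimes \eta$, which restricts over $X_+$ to the contraction $j$ in the statement. Define
$$T_\cF := \cone{\bar\jmath^* \bar\jmath_* \To \id}\,[-2].$$
Property (iii) then reduces to flat base change along the open immersion $i_{X_+}$ together with the vanishing of higher pushforwards of the generators of $\cW_0$ under $\bar\jmath$, so that $i_{X_+}^*$ commutes with $\bar\jmath^*\bar\jmath_*$ on the relevant subcategory.

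The hardest step is property (ii), for which I would compute $T_\cF(\cO)$ and $T_\cF(\cO(1))$ explicitly. The essential ingredient is that the scheme-theoretic image of $\bar\jmath$ is the determinantal hypersurface $\{\det = 0\} \subset \Hom(V,V)$, which admits the Koszul resolution $\cO \overset{\det}{\To} \cO$. Pulling back via $\bar\jmath$ yields $\cO_\mfX \overset{0}{\To} \cO_\mfX$, because $\det$ vanishes identically on the rank-one locus which is the image of $\bar\jmath$; this vanishing is the source of the cohomological shift that keeps the cone non-trivial after the $[-2]$ shift. Combining this with a Koszul-style resolution of the weight-one pushforward $\bar\jmath_*\cO(1)$ as a module over $\Sym(V^\vee \otimes V)$, and then taking the cone and shift, one finds $T_\cF(\cO) \iso \cO$ and $T_\cF(\cO(1)) \iso \complex{\underline{\cO^{\oplus 2}} \longrightarrow \cO(-1)}$, both in $\cW_{-1}$, matching \eqref{eqn.resultsofwindowshift} and establishing (ii).

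Property (i) is then essentially a corollary: it suffices to check that these explicit complexes $T_\cF(\cO), T_\cF(\cO(1)) \in \cW_{-1}$ become isomorphic to $\cO|_{X_-}$ and $\cO(1)|_{X_-}$ after restriction along $i_{X_-}$. The first is trivial; the second is precisely the (dual) Euler sequence on the zero section $\P V^\vee \subset X_-$. With the three properties in place, the diagram chase of Proposition \ref{prop.shiftequalstwist1} concludes the proof. The main obstacle is the Koszul calculation for $\bar\jmath^*\bar\jmath_* \cO(1)$, which requires an honest computation of the pushforward $\bar\jmath_* \cO(1)$ as a Cohen--Macaulay module over the coordinate ring of the determinantal hypersurface and then a careful tracking of the adjunction counit through the Koszul pullback.
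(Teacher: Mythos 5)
Your overall strategy—constructing a transfer functor on the stack satisfying analogues of properties (i)--(iii) and then running the same diagram chase as Proposition \ref{prop.shiftequalstwist1}—is exactly what the paper does, and properties (ii) and (iii) are addressed reasonably. However, your choice of transfer functor is wrong: the cone $T_\cF := \cone{\bar\jmath^*\bar\jmath_* \to \id}[-2]$ does \emph{not} satisfy property (i). The paper flags this explicitly: in the general argument (Theorem \ref{theorem.grassmancotwist}, Section \ref{section.geometry}) the correct transfer functor for the cotwist is $\mfR\mfF$ without taking a cone from the identity, and the paper remarks that this is a ``curious'' asymmetry with the twist case, since the fact that $\mfR\mfF$ restricts to the identity on the $X_-$ side (Lemma \ref{lemma.cotwisttransferproperty1}) is then a non-trivial statement holding only on the relevant window.

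To see concretely why your $T_\cF$ fails (i), compute $T_\cF(\cO(1))$. The free resolution $\cO^{\oplus 2} \to \cO^{\oplus 2}$ of $\bar\jmath_*\cO(1)$ over $\Hom(V,V)$ pulls back, under the non-flat map $\bar\jmath$, to a complex $\cO^{\oplus 2} \xrightarrow{\bar\jmath^*M} \cO^{\oplus 2}$ on $\mfX$ whose matrix has vanishing determinant—so $\mathbb{L}\bar\jmath^*\bar\jmath_*\cO(1)$ acquires a nonzero $H^{-1}$. A direct Koszul calculation then shows that the cone $\cone{\bar\jmath^*\bar\jmath_*\cO(1) \to \cO(1)}$ has homology in three degrees: a copy of $\cO(1)$ in degree $-2$, a torsion sheaf supported on $V \oplus \{0\}$ in degree $-1$, and a torsion sheaf supported on $\{0\} \oplus V^\vee$ in degree $0$. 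Restricting to $X_+$ kills the degree-$0$ piece (which is supported on the $X_+$-unstable locus) and recovers the expected answer $\complex{\underline{\cO^{\oplus 2}} \to \cO(-1)}$, so your property (iii) holds. But restricting to $X_-$ instead kills only the degree-$(-1)$ piece and leaves a torsion summand $\cO_{\P V^\vee}$ in homological degree $2$ after the $[-2]$ shift, so $i_{X_-}^*T_\cF(\cO(1)) \not\iso i_{X_-}^*\cO(1)$ and the diagram chase cannot be run. There is also a secondary discrepancy: the paper's transfer functor passes through the full Artin stack $\mfX^{(2,2)}$ via the correspondence $\mfZ^{(2,1,2)}$, whereas your $\bar\jmath$ lands in the scheme $\Hom(V,V) = X_+^{(2,2)}$; the extra stackiness of the intermediate space is what allows $\mfR\mfF$ (no cone) to carry the correct information to restrict to the identity on the $X_-$ side.
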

The proof of this proposition follows exactly the same structure as the proof of Proposition \ref{prop.shiftequalstwist1}, i.e. we find a transfer functor on $D^b(\mfX)$ which restricts to the given functor on $D^b(X_+)$. However, describing this transfer functor would require us to go into more detail than we wish to at this point, so for the moment we'll just do a heuristic calculation (for the full proof see Theorem \ref{theorem.grassmancotwist}). What we'll do is  show that these two functors give the same answer on the bundles $\cO$ and $\cO(1)$. These bundles generate all of $D^b(X_+)$, so this is some evidence that the functors are the same. We'll also see some of the kind of computations that will be needed in the proof of the general case.

We've already calculated the effect of $\omega_{-1,0}$ on $\cO$ and $\cO(1)$: the answer is given in \eqref{eqn.resultsofwindowshift}. So let's calculate the functor $[j^*j_*\to \id]$ on these two bundles and compare. We have
$$j_*(\cO)=\cO_{\Im(X_+)}$$
on $\Hom(V,V)$. To apply $j^*$, we need to know that this has a free resolution
\begin{equation}\label{eqn.basicfreeresolution2}\begin{tikzcd}0 \rar & \cO \rar & \cO \rar & \cO_{\Im(X_+)} \rar & 0. \end{tikzcd}\end{equation}
Taking the cone to the identity kills the second copy of $\cO$, so
$$ \cone{j^*j_* \To \id} : \cO \longmapsto \cO[2],$$
and hence the shifted cone agrees with the window-shift on the bundle $\cO$. To do the calculation for $\cO(1)$, we observe that $j_*(\cO(1))$ is a sheaf supported on $\Im(X_+)$, and it has a free resolution
\begin{equation}\label{eqn.basicfreeresolution3}\begin{tikzcd}0 \rar & \cO^{\oplus 2} \rar & \cO^{\oplus 2} \rar & j_*(\cO(1))\rar & 0 \end{tikzcd}\end{equation}
so $ [j^*j_* \to \id ]$ maps $\cO(1)$ to
$$\complex{ \cO^{\oplus 2} \To \cO^{\oplus 2} \To \underline{\cO(1)}  } $$ 
which is quasi-isomorphic to 
$$\complex{ \cO^{\oplus 2} \To \underline{\cO(-1)}  } [1],$$
using the analogue of the exact sequence \eqref{eqn.longeuler} on $X_+$. After shifting by $[-2]$, this agrees with $\omega_{-1,0}$.

\subsection{Grassmannian twists}
\subsubsection{Statement of results}

Now let's turn to the general case (at least for us!), namely the Grassmannian flop
$$\begin{tikzcd}[column sep=19pt] X_{+\phantom{-}}^{(d,r)} \rar[dashed] & \lar[dashed] X_{-\phantom{+}}^{(d,r)}.\end{tikzcd} $$
We seek a geometric interpretation for our window-shift autoequivalences
$$\omega_{k,l}: D^b \left(X_+^{(d,r)}\right) \overset{\sim}{\To} D^b \left(X_+^{(d,r)}\right).$$
As before, the effect of the window-shift is concentrated on the flopping locus where the birational map fails to be an isomorphism, i.e. the locus in $X_+^{(d,r)}$ that becomes unstable when we pass to the other GIT quotient. In the 3-fold flop case $X_+^{(2,1)}$ this locus was just the zero section $\P V$, but in a general $X_+^{(d,r)}$ it's much more complicated, and in particular is usually non-compact. Nevertheless, the geometric constructions of Sections \ref{section.intro_sphericaltwists} and \ref{section.intro_sphericalcotwists} can be generalized. 

Consider the correspondences \eqref{eqn.basiccorrespondence1} and  \eqref{eqn.basiccorrespondence2} that we used for $X_+^{(2,1)}$. The key point to notice is that $\Hom(V,V)$ is actually $X_+^{(2,2)}$, and that $X_+^{(2,0)}$ is a point! So in general we should be looking for correspondences as follows:
\begin{equation}\label{eqn.chainofcorrespondences}
\begin{tikzcd}[column sep=12pt] \ldots \drar{j}&  & Z^{(d,r-1, r)}\dlar[swap]{\pi} \drar{j}& & Z^{(d,r, r+1)} \dlar[swap]{\pi} \drar{j}& & \dlar[swap]{\pi} \ldots \\
                 & X_+^{(d,r-1)} & & X_+^{(d,r)} & & X_+^{(d,r+1)} & \end{tikzcd}
\end{equation}
The relevant correspondences for the $r=2$ case were described by the first author in \cite{Donovan:2011ua}: we describe the general case in Section \ref{section.grassmannians}. Then we have functors
$$F = j_*\pi^*: D^b \left(X_+^{(d,r)}\right) \To D^b\left(X_+^{(d, r+1)}\right) $$
which have right and left adjoints $R$ and $L$, and we form the \textit{twist} functors
$$T_F := \cone{ FR \To \id } :  D^b\left(X_+^{(d,r)}\right) \To D^b\left(X_+^{(d, r)}\right) $$
and \textit{inverse cotwist} functors
$$C_F^{-1}:= \cone{ LF \To \id }[-1]: D^b\left(X_+^{(d,r)}\right) \To D^b\left(X_+^{(d, r)}\right). $$
We then prove (Theorems \ref{theorem.grassmantwist} and  \ref{theorem.grassmancotwist}) that the twist functor $T_F$ is equal to the window-shift $\omega_{0,1}$, and the inverse cotwist functor $C_F^{-1}$ is equal to the window-shift $\omega_{-1,0}$ (up to a shift in homological degree). 

\subsubsection{Remarks on the proofs}

The structure of our proofs remains the same as in the 3-fold flop example: we find transfer functors on the stack $\mfX^{(d,r)}$ that transfer between the relevant pairs of windows, and restrict to $T_F$ and $C_F^{-1}$ on $X_+^{(d,r)}$. To find these transfer functors, we embed the correspondences $Z^{(d,r,r+1)}$ into correpondences of Artin stacks
$$\begin{tikzcd}[column sep=12pt]   & \mfZ^{(d,r,r+1)} \drar{j}  \dlar[swap]{\pi} & \\  \mfX^{(d,r)} & & \mfX^{(d,r+1)}  \end{tikzcd} $$
in the same way that the correspondence \eqref{eqn.basiccorrespondence1} sits inside the correspondence \eqref{eqn.basiccorrespondencestack}.

When we discussed the 3-fold flop case in Section \ref{section.intro_sphericaltwists}, one of the ingredients that we needed in the proof was the locally-free resolution \eqref{eqn.basicfreeresolution1} of the sky-scraper sheaf $\cO_{V}$ on $\mfX$. We hit a similar step in our calculations in Section \ref{section.intro_sphericalcotwists}: we needed the locally-free resolutions \eqref{eqn.basicfreeresolution2} and \eqref{eqn.basicfreeresolution3} of two sheaves that lived on $\Im(j)\subset \Hom(V,V)$. In our proof of the general case we're going to need to generalize these examples, i.e. we're going to need to produce explicit locally-free resolutions of various sheaves that live on the unstable loci in $\mfX^{(d,r)}$. 

 These locally-free resolutions are very closely related to the exact sequences of bundles on Grassmannians that we mentioned at the end of Section \ref{section.intro_grassmannian}. For example, we already noted that when we restrict the resolution \eqref{eqn.basicfreeresolution1} to $X_-$ we get the pull-up of the Euler sequence on $\P V^\vee$. For another example, consider the exact sequence \eqref{eqn.G24complex1} on $X_-^{(4,2)}$. If we consider this as a complex on $\mfX^{(4,2)}$ then it is no longer exact, but we claim that it only fails to be exact at the last term, so it gives a resolution of a sheaf. The last two terms are the twist by $\cO(-1)$ of the map
$$\wedge^2 V (1) \To \cO $$
and the cokernel of this map is the sky-scraper sheaf along the unstable locus $\mathcal{U}=\mfX^{(4,2)} \backslash X_-^{(4,2)}$. So \eqref{eqn.G24complex1} arises from the locally-free resolution of $\cO_{\mathcal{U}}(-1)$. The other two sequences \eqref{eqn.G24complex2} and \eqref{eqn.G24complex3} on $\mfX^{(4,2)}$ arise from locally-free resolutions of more complicated sheaves supported on $\mathcal{U}$. 

These locally-free resolutions/exact sequences on Grassmannians do not appear to be very well-known. They are present implicitly in the book of Weyman \cite{Weyman:2007wu}, and most of the exact sequences were described explicitly in \cite{Fonarev:2011tq}. We describe them in excruciating detail in Appendix \ref{section.righthand_square}, as applications of Theorem \ref{theorem.freeresolutions}.

\section{Proofs}
\label{section.proofs}

\begin{notn}
For a Young diagram $\delta$ we write $\delta = (\cpt{\delta}{1}, \ldots, \cpt{\delta}{d})$ where the $\cpt{\delta}{\bullet}$ are the (non-increasing sequence of) row lengths of $\delta$. Trailing zeroes may be omitted. Given $V$ a vector space of dimension $d$, we write $\bbS^\delta V$ for the associated Schur power \cite{Weyman:2007wu}.

Let $\gamma$ be a Young diagram of width $\leq w$ and height $\leq h$, so that we can draw $\gamma$ inside a $w\times h$ rectangle. Take the complement of $\gamma$ inside this rectangle and rotate it by $180^{\circ}$s: this produces a new Young diagram which we denote by $\comp{w}{h}{\gamma}$. Figure \ref{figure.complement} gives an example.
\begin{figure}[h]
\begin{center}
\begin{tikzpicture}[auto,>=latex',scale=0.6]
\YoungDiag{Diagram}{0}{0}{4}{3}{{3,2}}{$\gamma=(3,2)$}
\YoungDiag{Complement}{7}{0}{4}{3}{{4,2,1}}{$\comp{w}{h}{\gamma}=(4,2,1)$}
\end{tikzpicture}
\end{center}
\caption{Young diagram $\gamma=(3,2)$ and its complement for $h=3$, $w=4$.}
\label{figure.complement}
\end{figure}
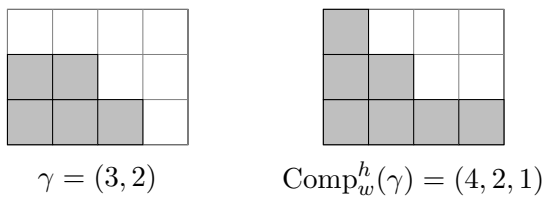
\end{notn}

\begin{rem}We will frequently use the following result from \cite[Exercise 2.18(a)]{Weyman:2007wu}:
$$\bbS^\gamma V^\vee \otimes \det V ^{\otimes w} = \bbS^{\comp{w}{d}{\gamma}} V.$$
\end{rem}

\subsection{Windows on Grassmannian flops}

\label{section.grassmannians}

Let $V$ be a vector space of dimension $d$, and $S$ be another vector space of dimension $r$, where $0<r\leq d$. For simplicity, we'll fix a trivialization of $\det V$ throughout.

Our first space is the affine Artin stack
$$\mfX = [\Hom(S,V)\oplus \Hom(V,S) \; /\; \operatorname{GL}(S) ].$$
In Section \ref{section.intro_grassmannian} we denoted this by $\mfX^{(d,r)}$, but from now on we will drop the $(d,r)$ from our notation. There are two possible GIT quotients of this stack, which correspond to open substacks denoted by
$$\begin{tikzcd}X_\pm \rar[hook]{i_{X_\pm}} & \mfX. \end{tikzcd}$$

\begin{rem} One quotient $X_+$ is the locus where the map from $S$ to $V$ is full rank: it is the total space of the vector bundle $\Hom(V,S)$ over $\Gr(r,V)$, where we reuse the notation $S$ to denote the tautological subspace bundle on the Grassmannian $\Gr(r,V)$.

Dually, $X_-$ is the locus where the map from $V$ to $S$ is of full rank: it is the total space of the vector bundle $\Hom(S,V)$ over the dual Grassmannian $\Gr(V, r)$, where now $S$ denotes the tautological quotient bundle.\end{rem}

 As anticipated in Section \ref{section.intro_window_shifts}, we are going to define some derived equivalences between $X_+$ and $X_-$ using \quotes{windows} in $D^b(\mfX$). To define these windows we need to recall Kapranov's exceptional collection for a Grassmannian \cite{Kapranov:2009wf}.

Let $\delta$ be a partition of some integer, which as usual we can draw as a Young diagram. Then associated to $\delta$ we have a Schur power $\bbS^\delta S^\vee$ of $S^\vee$. This is a representation of $\operatorname{GL}(S)$ and so induces a vector bundle on $\Gr(r,V)$. Now we put:

\begin{defn} $$\Gamma_{d,r} := \left\{ \text{Young diagrams $\gamma$ with height $\leq r$ and width $\leq d-r$ } \right\}$$ \end{defn}

Kapranov's exceptional collection for $\Gr(r,V)$ \cite{Kapranov:2009wf} is the set
$$ \setconds{ \bbS^\delta S^\vee }{ \delta\in \Gamma_{d,r} }.$$
We can also consider this as a set of vector bundles on $\Gr(V,r)$, on $X_{\pm}$, or on $\mfX$. These bundles give us our zeroth window, i.e. we define $\cW_0 \subset D^b(\mfX)$ as the full subcategory split-generated by this set of vector bundles. The other windows $\cW_k$ are obtained by tensoring $\cW_0$ by powers of the tautological line bundle
$$\cO(1) := \det S^\vee.$$

\begin{defn} $\cW_k$ is the full subcategory of $D^b(\mfX)$ split-generated by the set
$$ \setconds{ \bbS^\delta S^\vee(k) }{ \delta\in \Gamma_{d,r} }. $$
\end{defn}

Now observe that if the width of $\delta$ is strictly less than $d-r$ then we can create a new diagram $\tilde{\delta}\in \Gamma_{d,r}$ by adding on a new column of height $r$ to $\delta$ (see Figure \ref{fig.addcolumn}), and $$\bbS^\delta S^\vee (k) = \bbS^{\tilde{\delta}} S^\vee (k-1).$$

These are the bundles that lie in the generating set for both $\cW_k$ and the neighbouring window $\cW_{k-1}$. Since this observation will be useful in the sequel, we put:

\begin{defn}
$$\Gamma_{d,r}^{(1)} = \setconds{ \delta \in \Gamma_{d,r} }{ \operatorname{width}(\delta)<d-r } $$
\begin{equation*}\label{equation.splitGamma}\Gamma_{d,r}^{(2)} = \setconds{ \delta \in \Gamma_{d,r} }{ \operatorname{width}(\delta)=d-r } \end{equation*}
\end{defn}
We will also frequently switch between Schur powers of $S^\vee$ and of $S$. In terms of the latter, $\cW_k$ is generated by the set
$$ \setconds{ \bbS^\gamma S(d-r+k) }{ \gamma\in \Gamma_{d,r} }.$$

\begin{figure}[H]
\begin{center}
\begin{tikzpicture}[auto,>=latex',scale=0.6]
\YoungDiag{delta}{0}{0}{4}{3}{{3,1}}{$\delta=(3,1)$}
\YoungDiag{tilde_delta}{7}{0}{4}{3}{{4,2,1}}{$\tilde{\delta}=(4,2,1)$}
\end{tikzpicture}
\end{center}
\caption{Young diagram $\delta \in \Gamma_{d,r}^{(1)}$ and its twist $\tilde{\delta}$.}
\label{fig.addcolumn}
\end{figure}
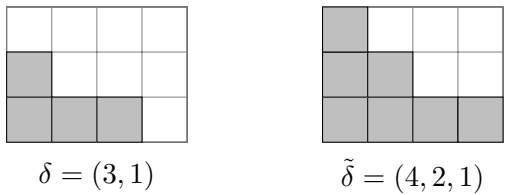

The following proposition is the crucial ingredient in constructing our window equivalences:

\begin{prop}\label{prop.restrictionequivalences2} For any $k$ and $0<r<d$, both functors
$$i_{X_\pm}^*: \cW_k \To D^b(X_\pm)$$
are equivalences.
\end{prop}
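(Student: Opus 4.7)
The plan is to reduce the statement to the claim that $T := \bigoplus_{\delta \in \Gamma_{d,r}} \bbS^\delta S^\vee$, viewed on $X_\pm$, is a tilting generator whose endomorphism algebra matches the one computed on $\mfX$. Once this is in hand, both $\cW_0$ and $D^b(X_\pm)$ become identified with $D^b(A\textrm{-mod})$ (where $A = \operatorname{End}(T)$) via $\RHom(T,-)$, and $i^*_{X_\pm}$ realises this equivalence. The general case $k \neq 0$ follows by tensoring with $\cO(-k)$, an autoequivalence of both categories that commutes with $i^*_{X_\pm}$ and takes $\cW_k$ to $\cW_0$. By the symmetry swapping $V \leftrightarrow V^\vee$ and $\cS \leftrightarrow \cS^\vee$, it is enough to treat $X_+$.

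For tilting on $X_+$, I exploit that the projection $p: X_+ \To \Gr(r,V)$ is the total space of $V^\vee \otimes S$, hence an affine morphism with $p_* \cO_{X_+} = \Sym^\bullet(V \otimes S^\vee)$. Therefore, for $\delta, \delta' \in \Gamma_{d,r}$,
$$\Ext^i_{X_+}\bigl(p^*\bbS^\delta S^\vee,\, p^*\bbS^{\delta'} S^\vee\bigr) \iso H^i\bigl(\Gr,\; \bbS^\delta S \otimes \bbS^{\delta'} S^\vee \otimes \Sym^\bullet(V\otimes S^\vee)\bigr).$$
Decomposing $\Sym^\bullet(V\otimes S^\vee)$ by Cauchy's identity, then expanding the resulting triple product of Schur powers via Littlewood-Richardson, the right-hand side reduces to a sum of cohomologies $H^i(\Gr, \bbS^\mu S^\vee)$ (tensored with trivial $V$-factors). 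Borel-Weil forces these to vanish for $i > 0$ provided the shapes $\mu$ that appear stay in the globally generated cone on $\Gr(r,V)$, which is guaranteed by the constraints $\delta, \delta' \in \Gamma_{d,r}$. Generation follows by adjunction: for $\cF \in D^b(X_+)$ with $\RHom(p^*T, \cF) = 0$, one has $\RHom_{\Gr}(T, p_*\cF) = 0$, so $p_* \cF = 0$ by Kapranov's theorem on $\Gr(r,V)$, and therefore $\cF = 0$ since $p$ is affine.

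To match the endomorphism algebras, I use that $\mfX = [M/\operatorname{GL}(S)]$ with $M = \Hom(S,V) \oplus \Hom(V,S)$ affine and $\operatorname{GL}(S)$ reductive, so $\Ext^{>0}_{\mfX}$ between any two of our locally-free generators vanishes automatically. The unstable locus complementary to $X_+$ is the product of $\Hom(V,S)$ with the determinantal variety of rank-deficient maps $S \To V$, of codimension $d - r + 1 \geq 2$ whenever $0 < r < d$. Since $T$ is locally free and $\mfX$ is smooth, sections extend uniquely across this codimension-two locus, yielding an isomorphism $\Hom_{\mfX}(T, T) \isoto \Hom_{X_+}(i^*_{X_+} T, i^*_{X_+} T)$. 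Combining this with tilting on $X_+$ realises $i^*_{X_+}$ as the derived Morita equivalence, proving the proposition.

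The technical heart is the Ext-vanishing in the second step: the Schur expansion of $\bbS^\delta S \otimes \bbS^{\delta'} S^\vee \otimes \Sym^\bullet(V \otimes S^\vee)$ produces terms $\bbS^\mu S^\vee$ twisted by various powers of $\det S$, and one must verify that no such twisted shape lands in the Borel-Weil-Bott regime where cohomology jumps into positive degree. The combinatorial role of the Kapranov window $\Gamma_{d,r}$ (width $\leq d-r$, height $\leq r$) is precisely to keep all Weyl-shifted weights inside the dominant chamber; carrying out this bookkeeping explicitly, accounting for the full range of Littlewood-Richardson contributions, is the main labour of the argument.
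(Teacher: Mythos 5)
Your argument follows essentially the same route as the paper: reduce to $X_+$ by symmetry, invoke that $\pi^*\mcT$ is tilting on $X_+$, and then compare endomorphism algebras across $\mfX$ and $X_+$ using that $\mfX$ is an affine quotient stack (so there are no higher $\Ext$'s between locally frees) together with the codimension $\geq 2$ bound on the unstable locus. The only real difference is that the paper simply cites \cite[Appendix C]{Donovan:2011ua} for the tilting property, whereas you sketch a direct proof via Cauchy/Littlewood--Richardson/Borel--Weil--Bott but (as you acknowledge) leave the combinatorial bookkeeping unfinished; the overall structure and key ingredients coincide.
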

\begin{proof}
By symmetry we only need the argument for $i_{X_+}^*$. We observe that $i_{X_+}^* S^\vee = \pi^* S^\vee$ where $\pi$ is the projection $X_+ \to \Gr(r,V)$. It immediately follows, because Schur powers commute with pullbacks, that $i_{X_+}^* \cW_k = \pi^* \mcT$, where $\mcT$ is the Kapranov tilting bundle for $\Gr(r,V)$ given by $$\mcT := \bigoplus_{\delta \in \Gamma_{d,r}} \bbS^\delta S^\vee.$$ An extended exercise in Schur functors \cite[Appendix C]{Donovan:2011ua} gives that $\pi^* \mcT$ is tilting on $X_+$.

It then suffices to show that the natural restriction map of derived functors $$\RHom_{\mfX}\left(\cW_k, \cW_k\right) \To \RHom_{X_+}\left(i_{X_+}^* \cW_k, i_{X_+}^* \cW_k\right) $$
induces isomorphisms on cohomology. There is no higher cohomology on the left-hand side because $\cW_k$ is locally free and the stack $\mfX$ is affine, and none on the right-hand side by the above tilting property. It therefore remains to show that the restriction map of \textit{ordinary} $\Hom$ functors $$\Hom_{\mfX}\left(\cW_k, \cW_k\right) \To \Hom_{X_+}\left(i_{X_+}^* \cW_k, i_{X_+}^* \cW_k\right)$$ is an isomorphism. Consider then the complement of $X_+$ in $\mfX$. This is the pull-up via the projection $\pi: \mfX \to \Hom(S,V)$ of the locus in $\Hom(S,V)$ consisting of maps of rank strictly less than $r=\dim S$. We then see from \cite[Prop 1.1(b)]{Bruns:198wwa} that its codimension is $d-r+1\geq2$, and hence the required isomorphism follows by normality.
\end{proof}

It immediately follows that:
\begin{thm}\label{thm.equivalences2} For $0<r<d$ there exists a {\em window equivalence} $\psi_k$ defined by the composition
$$\psi_k: \begin{tikzcd}[column sep=38pt] D^b(X_+) \rar{\sim}[swap]{(i^*_{X_+})^{-1}} & \cW_k \rar{\sim}[swap]{i^*_{X_-}} & D^b(X_-).\end{tikzcd}$$
\end{thm}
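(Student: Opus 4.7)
The plan is to obtain Theorem \ref{thm.equivalences2} as an immediate formal consequence of Proposition \ref{prop.restrictionequivalences2}, which has already done all the substantive work. Concretely, Proposition \ref{prop.restrictionequivalences2} asserts that for every $k \in \Z$ the two restriction functors
\[
i_{X_+}^*: \cW_k \To D^b(X_+), \qquad i_{X_-}^*: \cW_k \To D^b(X_-)
\]
are equivalences of triangulated categories, so there is nothing left to do but compose one with a quasi-inverse of the other.

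First I would fix, for each $k$, a chosen quasi-inverse $(i_{X_+}^*)^{-1}: D^b(X_+) \to \cW_k$ of the equivalence $i_{X_+}^*$; this exists by general triangulated-categorical nonsense once $i_{X_+}^*$ is known to be an equivalence. Then I would set
\[
\psi_k \;:=\; i_{X_-}^* \circ (i_{X_+}^*)^{-1}: D^b(X_+) \To D^b(X_-),
\]
exactly as displayed in the statement. Since both factors are equivalences by Proposition \ref{prop.restrictionequivalences2}, their composite $\psi_k$ is an equivalence, with explicit quasi-inverse given by $\psi_k^{-1} = i_{X_+}^* \circ (i_{X_-}^*)^{-1}$.

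There is no real obstacle here: the only thing one might want to remark on is that the functor $\psi_k$ depends on a choice of quasi-inverse, but only up to (canonical) natural isomorphism, so the window equivalence is well defined up to isomorphism of functors. The hypothesis $0 < r < d$ is needed only insofar as it ensures that both $X_+$ and $X_-$ are nonempty (so that both GIT quotients actually exist), which is already what Proposition \ref{prop.restrictionequivalences2} assumes; beyond that, all the genuine content — the Kapranov tilting bundle computation on $\Gr(r,V)$, the cohomology vanishing, and the codimension $\geq 2$ argument using normality — lives in the proof of the preceding proposition, not here.
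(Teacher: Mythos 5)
Your proof is correct and matches the paper exactly: the paper deduces Theorem \ref{thm.equivalences2} as an immediate formal consequence of Proposition \ref{prop.restrictionequivalences2}, with the phrase ``It immediately follows that,'' and supplies no further argument. The composition of a quasi-inverse of $i_{X_+}^*$ with $i_{X_-}^*$ is all there is to it.
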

\begin{rem}Theorem \ref{thm.equivalences2} is obtained  in \cite[Section 5]{Buchweitz:2011ug} using a different method which works in arbitrary characteristic.\end{rem}

Consequently we have:
\begin{defn}\label{defn.window-autoequivalences2} We define {\em window-shift autoequivalences} $\omega_{k,l}$ by
$$\omega_{k,l} := \psi_k^{-1}\psi_l : D^b(X_+) \overset{\sim}{\To} D^b(X_+). $$
\end{defn}

\subsection{The geometric construction}
\label{section.geometry}
In \cite{Donovan:2011ua}, the first author constructed an endofunctor of $D^b(X_+)$ using more geometric techniques, and proved that it was an autoequivalence when $r\leq 2$. In this section we will show that  this endofunctor agrees with the window-shift $\omega_{0,1}$, and hence that it is in fact an autoequivalence for all $r$. 

In addition to the vector spaces $V$ and $S$, let $H$ be a third vector space, of dimension $r-1$. Consider the affine Artin stack
$$\mfY = [\Hom(H,V)\oplus \Hom(V, H) \; / \; \operatorname{GL}(H) ] $$
which is of course the same thing as $\mfX^{(d, r-1)}$. It contains an open substack $Y_+$ ($=X_+^{(d,r-1)}$) consisting of the locus where the map from $H$ to $V$ has full rank: this is the total space of a vector bundle over $\Gr(r-1,V)$.

Now let
$$\bar{\mfZ} = [\Hom(S, V) \oplus \Hom(V, H) \oplus \Hom(H, S) \; / \; \operatorname{GL}(H)\times \operatorname{GL}(S) ].$$
There are obvious maps
$$\mfY \overset{\pi}{\longleftarrow} \bar{\mfZ}  \overset{j}{\To}\mfX $$
given by composing the relevant two linear maps in $\bar{\mfZ}$, and then forgetting the redundant group action. This defines a correspondence between $\mfY$ and $\mfX$, however it is not exactly the correspondence that we want, rather we will define
$$\mfZ \subset \bar{\mfZ} $$
to be the substack where the map from $H$ to $S$ is an injection. This is the correspondence that we want to consider.

There is an open substack $Z \subset \mfZ$ where the map from $S$ to $V$ is also required to be an injection. We have a commutative diagram as follows:
\begin{equation}\label{diagram.correspondences}\begin{tikzcd} \mfY  & \mfZ \lar[swap]{\pi} \rar{j} & \mfX \\
Y_+ \uar[hookrightarrow]{i_{Y_+}} & Z \lar[swap]{\pi} \uar[hookrightarrow]{i_{Z_{}}} \rar{j} & X_+ \uar[hookrightarrow]{i_{X_+}} \end{tikzcd}\end{equation}

\begin{rem}The lower line of this diagram gives a correspondence between $Y_+$ and $X_+$. This was introduced in \cite{Donovan:2011ua} (in that paper $X_0$ denotes the space that we are calling $Y_+$, and $\hat{B}$ denotes the correspondence that we are calling $Z$), where it was used to construct endofunctors of $D^b(X_+)$ and $D^b(Y_+)$ as we shall we now explain. Note also that the correspondence is analogous to the Hecke correspondences in \cite{Cautis:2011wi}.\end{rem}

Consider the functor
 $$F := j_* \pi^* :  D^b(Y_+) \To D^b(X_+).$$
It has a right adjoint
$$R := \pi_* j^! :  D^b(X_+) \To D^b(Y_+)  $$
where 
$$j^!(-) = j^*(-)\otimes K_j[\dim j].$$
It also has a left adjoint
$$ L := \pi_*(j^*(-)\otimes K_\pi)[\dim \pi].$$
Both $X_+$ and $Y_+$ are Calabi-Yau \cite[Section 3.2]{Donovan:2011ua}, so $K_\pi=K_j$, so we deduce that
$$ R = L[-\sigma]$$
where 
$$\sigma = \dim \pi - \dim j = 2(d-r) +1.$$
This means that $F$ and $R$ are \textit{biadjoint} functors up to a shift. By applying standard Fourier-Mukai techniques \cite[Appendix A]{Donovan:2011ua} we may take cones on units and counits to give four endofunctors:
\begin{defn} \begin{enumerate} \item The \textbf{twist} functor $T_F : D^b(X_+) \To D^b(X_+)$ is the cone
$$ T_F := \cone{  FR \To \id  }.$$
It has a right adjoint
$$ T_F^\dagger := \cone{  \id \To FR[\sigma]  }.$$
\item The \textbf{cotwist} functor $C_F: D^b(Y_+) \To D^b(Y_+)$ is the cone
$$ C_F := \cone{  \id \To RF  }.$$
It has a right adjoint
$$C_F^\dagger := \cone{  RF[\sigma] \To \id  }[-1].$$
\end{enumerate} \end{defn}
General theory \cite{Anno:2010we} says that $T_F$ is an equivalence iff $C_F$ is an equivalence, given the fact that $X_+$ and $Y_+$ are Calabi-Yau. 

For $r\leq 2$ these functors were proven to be equivalences in \cite{Donovan:2011ua}: it is an immediate corollary of the following theorem that in fact $T_F$ is an equivalence for all $r<d$.

\begin{thm}\label{theorem.grassmantwist} For $r<d$, the twist functor $T_F$ is naturally isomorphic to the window-shift $\omega_{0,1}$. \end{thm}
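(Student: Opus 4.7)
The plan is to mimic the strategy of Proposition~\ref{prop.shiftequalstwist1} from the $3$-fold flop case: construct a \emph{transfer functor} on the stack $\mfX$ which restricts to $T_F$ on $X_+$ and which implements the passage from $\cW_1$ to $\cW_0$, then conclude via a commutative diagram identical in shape to the one in that proof. The natural candidate, built from the upper row of \eqref{diagram.correspondences}, is
$$T_\cF := \cone{ j_*\pi^*\pi_*j^! \To \id } : D^b(\mfX) \To D^b(\mfX).$$
It is enough to verify three properties, each on the subcategory $\cW_1$: (i) $i_{X_-}^* T_\cF \iso i_{X_-}^*$; (ii) $T_\cF(\cW_1) \subset \cW_0$; and (iii) $i_{X_+}^* T_\cF \iso T_F \, i_{X_+}^*$. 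Given these, the analogue of the diagram
$$\begin{tikzcd}[column sep=25pt, row sep=25pt]
\cW_1 \ar{rr}{T_\cF} \ar{dd}[swap]{i_{X_+}^*}[above,sloped]{\sim} \ar{dr}{i_{X_-}^*}[below,sloped]{\sim} & & \cW_{0} \ar{dd}{i_{X_+}^*}[below,sloped]{\sim} \ar{dl}[swap]{i_{X_-}^*}[below,sloped]{\sim} \\
& D^b(X_-) & \\
D^b(X_+) \ar{rr}{T_F} \ar{ur}{\psi_1}[below,sloped]{\sim} & & D^b(X_+) \ar{ul}[swap]{\psi_0}[below,sloped]{\sim}
\end{tikzcd}$$
forces $T_F = \psi_0^{-1}\psi_1 = \omega_{0,1}$.

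Property (i) should be immediate from the definition of $\mfZ$: the map $j$ sends $(S\to V,\, V\to H,\, H\to S)$ to $(S\to V,\, V\to H\to S)$, so the image $j(\mfZ)$ lies in the locus where the map $V\to S$ factors through the $(r-1)$-dimensional $H$ and hence has rank $<r$. Thus $j(\mfZ)$ is disjoint from $X_-$, and $i_{X_-}^* j_* = 0$. For property (iii), the lower square of \eqref{diagram.correspondences} is cartesian, so proper base change produces a natural transformation $i_{X_+}^* T_\cF \To T_F \, i_{X_+}^*$; one must then check that on generators of $\cW_1$ the stack-level $\pi_*$ agrees with the space-level $\pi_*$, which reduces to a vanishing-of-higher-cohomology statement on the $\Gr(r-1,S)$-fibres of $\pi$, computable by Borel-Weil-Bott (this is the exact analogue of the observation in Section~\ref{section.intro_sphericaltwists} that $\cO$ and $\cO(-1)$ have no higher cohomology on $\P V$).

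The main obstacle is property (ii), and it is where the combinatorics of the exceptional collection intervenes. For each $\delta\in\Gamma_{d,r}$ we must compute $T_\cF(\bbS^\delta S^\vee(1))$ and check that the resulting complex lies in $\cW_0$. When $\delta\in\Gamma_{d,r}^{(1)}$ we may rewrite $\bbS^\delta S^\vee(1) = \bbS^{\tilde\delta} S^\vee \in \cW_0$, and one expects $\pi_* j^!(\bbS^\delta S^\vee(1))$ to vanish for Borel-Weil-Bott reasons, so that $T_\cF$ acts as the identity on these bundles; this generalises the calculation $\pi_*j^!(\cO(1))=0$ sketched in Section~\ref{section.intro_sphericaltwists}. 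For $\delta\in\Gamma_{d,r}^{(2)}$ the bundle lies outside $\cW_0$, and the computation of $T_\cF$ produces a non-trivial complex, whose degree-zero term must be rewritten using an explicit locally-free resolution on $\mfX$ of the sheaf arising as the counit $j_*\pi^*\pi_*j^!(\bbS^\delta S^\vee(1))\to \bbS^\delta S^\vee(1)$. These resolutions are precisely the generalised Koszul-type complexes whose restrictions to $X_-$ yield the Eagon-Northcott, Buchsbaum-Rim, and $\mathcal{C}^k$ sequences from Section~\ref{section.intro_grassmannian}; they are supplied in the required generality by Theorem~\ref{theorem.freeresolutions} in Appendix~\ref{appendix}. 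The heart of the argument is to identify the terms of this resolution as Schur powers indexed by diagrams in $\Gamma_{d,r}$ (using the complement operation $\gamma\mapsto\cpt{(d-r)}{r}{\gamma}$), verify that they cancel appropriately against $j_*\pi^*\pi_*j^!$, and conclude that the remaining terms all lie in $\cW_0$. Once this combinatorial check is carried out uniformly in $\delta$, the three properties of the transfer functor are established and the theorem follows.
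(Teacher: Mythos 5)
Your proposal takes essentially the same approach as the paper: construct the transfer functor $T_\mfF = \cone{j_*\pi^*\pi_*j^! \To \id}$ on $D^b(\mfX)$, verify the three properties (restriction to $X_-$ is identity, windows map correctly, restriction to $X_+$ recovers $T_F$), and conclude by the formal diagram argument from Proposition~\ref{prop.shiftequalstwist1}; this is exactly how the paper proves Theorem~\ref{theorem.grassmantwist} via Lemmas~\ref{lemma.twisttransferproperty1}--\ref{lemma.twisttransferproperty3}. One small slip: in (iii) the natural transformation arises because the \emph{left} square of~\eqref{diagram.correspondences} (involving $\pi$) fails to be cartesian while the \emph{right} square (involving $j$) is cartesian, and the relevant base-change check (Proposition~\ref{prop.pushdownsbypi}) concerns the fibres of $\pi$, which are projective spaces $\PP(V/H)$ of dimension $d-r$, not $\Gr(r-1,S)$-fibres as you state.
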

\begin{proof}This proceeds formally from Lemmas \ref{lemma.twisttransferproperty1}, \ref{lemma.twisttransferproperty2} and \ref{lemma.twisttransferproperty3}, as in the proof of Proposition \ref{prop.shiftequalstwist1}.\end{proof}

Theorem \ref{theorem.grassmantwist} implies that $C_F$ is also an equivalence, and so $C_F^\dagger = C_F^{-1}$, at least for $r<d$. However we can say more: since $Y_+ =X_+^{(d,r-1)}$, we also have window-shift autoequivalences
$$\omega^Y_{k,l}:  D^b(Y_+) \To D^b(Y_+)  $$
and we prove:

\begin{thm}\label{theorem.grassmancotwist} The shifted inverse cotwist functor $C_F^{-1}[-\sigma]$ is naturally isomorphic to the window-shift $\omega^Y_{-1,0}$. \end{thm}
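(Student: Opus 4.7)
The plan is to mirror the strategy of Theorem~\ref{theorem.grassmantwist}, with the stack $\mfX$ replaced by $\mfY$, the window pair $(\cW_1,\cW_0)$ replaced by $(\cW^Y_0,\cW^Y_{-1})$, and the twist $T_F$ replaced by the shifted inverse cotwist $C_F^{-1}[-\sigma]$. Concretely, I would lift the geometric correspondence $Y_+\leftarrow Z\to X_+$ to the stacky correspondence $\mfY \leftarrow \mfZ \to \mfX$ of diagram~\eqref{diagram.correspondences}, set $\mcF := j_*\pi^*$ with its left adjoint $\mathcal{L} := \pi_*(j^*(-)\otimes K_\pi)[\dim\pi]$, and construct a \emph{transfer functor}
$$\tilde{C} \;:=\; \cone{\mathcal{L}\mcF \To \id}[-1-\sigma] \,:\, D^b(\mfY) \To D^b(\mfY),$$
the stacky analogue of $C_F^{-1}[-\sigma]$.

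Next I would establish three transfer lemmas, in parallel with the ones invoked in the proof of Theorem~\ref{theorem.grassmantwist}:
\begin{enumerate}
\item \emph{Triviality on $Y_-$:} $i_{Y_-}^*\tilde{C} \iso i_{Y_-}^*$ after restriction to $\cW^Y_0$;
\item \emph{Window shift:} $\tilde{C}$ sends $\cW^Y_0$ into $\cW^Y_{-1}$;
\item \emph{Geometric restriction:} $i_{Y_+}^*\tilde{C} \iso C_F^{-1}[-\sigma]\,i_{Y_+}^*$ on $\cW^Y_0$.
\end{enumerate}
Given these, the identification $C_F^{-1}[-\sigma] \iso (\psi^Y_{-1})^{-1}\psi^Y_0 = \omega^Y_{-1,0}$ follows from the same formal diagram chase used at the end of Proposition~\ref{prop.shiftequalstwist1}, with $\mfY$, $Y_+$, $Y_-$ playing the roles of $\mfX$, $X_+$, $X_-$ and the top edge given by $\tilde{C}:\cW^Y_0\to\cW^Y_{-1}$.

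Property~(i) should follow from the observation that over $Y_-$ the map $V\to H$ has full rank, which trivialises the factorisations through $S$ parametrised by $\mfZ$ and makes $\mathcal{L}\mcF$ concretely computable on that open locus. Property~(iii) reduces to a cohomological vanishing check guaranteeing that the stacky derived pushforward along $\pi$ agrees with its geometric counterpart on every generator of $\cW^Y_0$, exactly in the spirit of the check at the end of Section~\ref{section.intro_sphericaltwists}. The main obstacle is property~(ii): for each generator $\bbS^\delta H^\vee$ with $\delta \in \Gamma_{d,r-1}$ one must compute $\tilde{C}(\bbS^\delta H^\vee)$ and identify it with a complex of bundles lying in $\cW^Y_{-1}$. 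This forces the use of explicit locally-free resolutions for the sheaves supported on the unstable locus $\mfY \setminus Y_-$ -- the Eagon--Northcott, Buchsbaum--Rim and further generalised Koszul complexes produced by Theorem~\ref{theorem.freeresolutions} in the appendix, of which the sequences~\eqref{eqn.G24complex1}--\eqref{eqn.G24complex3} of Section~\ref{section.intro_grassmannian} are the simplest examples. Tracking these resolutions through $\pi^*$, $j^*$, the twist by $K_\pi$ and the derived pushforward $\pi_*$, and then matching the result term-by-term with the generators of $\cW^Y_{-1}$, is the combinatorial heart of the proof and where I expect the genuine difficulty to lie.
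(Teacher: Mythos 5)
Your overall strategy is right: lift the correspondence to the stacky level, find a transfer functor on $D^b(\mfY)$ with the three properties, and conclude by the formal diagram chase of Proposition~\ref{prop.shiftequalstwist1}. You have also correctly identified that the combinatorial backbone will be the resolutions of Theorem~\ref{theorem.freeresolutions}. However, your choice of transfer functor is wrong, and the paper explicitly warns about exactly this. You propose $\tilde{C}=\cone{\mathcal{L}\mcF\To\id}[-1-\sigma]$, a cone mimicking the formula for $C_F^{-1}[-\sigma]$ on $Y_+$. The paper instead takes the transfer functor to be simply $\mfR\mfF=\pi_*j^!j_*\pi^*$ \emph{with no cone at all}, and remarks: ``curiously enough in this case the transfer functor is $\mfR\mfF$, i.e.\ we do not take the cone from the identity.''

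Your cone does not make sense on the stack, and where it does make sense it gives the wrong answer. The natural transformation $\mathcal{L}\mcF\To\id$ would be the counit of a left adjunction $\mathcal{L}\dashv\mcF$; but $\pi:\mfZ\To\mfY$ (after stripping trivial directions, $\pi:\mfP\To\mfQ$) has fibres of the form $[\mathbb{A}^{d-r+1}/\mathbb{G}_m]$ and hence is not proper, so the Grothendieck-duality formula $\pi_*(j^*(-)\otimes K_\pi)[\dim\pi]$ is not a left adjoint of $j_*\pi^*$ on the Artin stack. Worse, even accepting the formula, the required map must vanish on all generators $\bbS^\delta H^\vee$ with $\delta\in\Gamma^{(1)}_{d,r-1}$: for these the paper computes $\mfR\mfF(\bbS^\delta H^\vee)=\bbS^\delta H^\vee$, so $\mathcal{L}\mcF(\bbS^\delta H^\vee)\cong\bbS^\delta H^\vee[\sigma]$, and since $\mfY$ is an affine quotient stack with reductive stabiliser we have $\Ext^{-\sigma}_{\mfY}(\bbS^\delta H^\vee,\bbS^\delta H^\vee)=0$ for $\sigma>0$; your cone then gives $\bbS^\delta H^\vee\oplus\bbS^\delta H^\vee[-1-\sigma]$ instead of $\bbS^\delta H^\vee$, so property~(ii) fails on the bundles that should be fixed. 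The cone structure only materialises after $i_{Y_+}^*$: the content of Lemma~\ref{lemma.cotwisttransferproperty3} is precisely that $i^*_{Y_+}\mfR\mfF(\cE)\To RF\,i^*_{Y_+}(\cE)\To i^*_{Y_+}\cE[-\sigma]$ is an exact triangle, recreating $C_F^\dagger[-\sigma]$ geometrically rather than stacky-formally. A related consequence you underestimate: since the transfer functor is not a cone, property~(i) is \emph{not} obvious in this case — the paper notes this explicitly — and establishing it takes genuine work comparing the unit of $\mfF\dashv\mfR$ with the maps in Theorem~\ref{theorem.freeresolutions} via Lemmas~\ref{lemma.1dmappingspaces} and~\ref{lemma.1dmappingspaces3}.
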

\begin{proof}This follows from Lemmas \ref{lemma.cotwisttransferproperty1}, \ref{lemma.cotwisttransferproperty2} and \ref{lemma.cotwisttransferproperty3}, once again using the method of proof in Proposition \ref{prop.shiftequalstwist1}.\end{proof}

Theorem \ref{theorem.grassmancotwist} was proved in \cite{Donovan:2011ua} for the $r\leq 2$ case.

We now temporarily reinstate the $d$'s and $r$'s into our notation, and state these theorems in a slightly different way. We have a whole chain of correspondences, going between $X_+^{(d,r)}$ and $X_+^{(d,r+1)}$ for every $r$ (see diagram \eqref{eqn.chainofcorrespondences}). So for every $d$ and $r$, we have both twist and cotwist endofunctors
$$T_F^{(d,r)}\:\: \mbox{and} \:\: C_F^{(d,r)}:  D^b \left(X_+^{(d,r)}\right) \overset{\sim}{\To} D^b\left(X_+^{(d,r)}\right).$$
We also have our window-shift autoequivalences $\omega_{k,l}^{(d,r)}$ of $D^b\left(X_+^{(d,r)}\right)$, and our result is that
$$T_F^{(d,r)}= \omega_{0, 1}^{(d,r)}, $$
$$C_F^{(d,r)}[-\sigma_{d,r}] = \omega_{0,-1}^{(d,r)}, $$
where $\sigma_{d,r}=2(d-r)-1$. This means these two functors are almost inverse to each other. More precisely, the relations \eqref{equation.windowshiftrelations} between window-shifts imply:

\begin{cor}\label{corollary.twistcotwist}
$$\left(T_F^{(d,r)}\right)^{-1} = (\otimes \cO(1))\circ C_F^{(d,r)}[-\sigma_{d,r}] \circ (\otimes \cO(-1) ) $$
\end{cor}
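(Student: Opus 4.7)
The plan is to derive this corollary purely formally from the two identifications stated just above it, namely
\[
T_F^{(d,r)} = \omega_{0,1}^{(d,r)}, \qquad C_F^{(d,r)}[-\sigma_{d,r}] = \omega_{0,-1}^{(d,r)},
\]
together with the elementary window-shift relations \eqref{equation.windowshiftrelations}. All the genuine geometric and categorical content has already been absorbed into Theorems \ref{theorem.grassmantwist} and \ref{theorem.grassmancotwist}, so the only thing left to do here is index-manipulation.

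First I would invert the first identification. Setting $m=l$ in the composition law $\omega_{m,k}\omega_{k,l}=\omega_{m,l}$ gives $\omega_{l,k}\omega_{k,l}=\omega_{l,l}=\id$, so $\omega_{k,l}^{-1}=\omega_{l,k}$ for all integers $k,l$. Applied to the first identification this yields
\[
\bigl(T_F^{(d,r)}\bigr)^{-1} = \omega_{1,0}^{(d,r)}.
\]

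Second, I would apply the conjugation relation $\omega_{k+m,\,l+m} = (\otimes \cO(m))\circ\omega_{k,l}\circ(\otimes\cO(-m))$ with $k=0$, $l=-1$, $m=1$ to obtain
\[
\omega_{1,0}^{(d,r)} = (\otimes\cO(1))\circ\omega_{0,-1}^{(d,r)}\circ(\otimes\cO(-1)),
\]
and then substitute the second identification $\omega_{0,-1}^{(d,r)} = C_F^{(d,r)}[-\sigma_{d,r}]$ to conclude. There is no real obstacle in this argument; the only thing one has to be slightly careful about is bookkeeping of signs in the shift $\sigma_{d,r}$ and in the indices of the window-shifts, so that the two applications of the relations in \eqref{equation.windowshiftrelations} match the direction in which one wants to conjugate.
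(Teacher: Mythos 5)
Your argument is correct and is precisely what the paper intends: the corollary is a purely formal consequence of $T_F^{(d,r)}=\omega_{0,1}^{(d,r)}$, $C_F^{(d,r)}[-\sigma_{d,r}]=\omega_{0,-1}^{(d,r)}$, and the two window-shift relations (inversion $\omega_{k,l}^{-1}=\omega_{l,k}$ from the composition law, and conjugation by $\otimes\cO(1)$ from the shift law). Your index bookkeeping ($k=0$, $l=-1$, $m=1$) reproduces the stated formula exactly, so there is nothing to add.
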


\begin{notn} Finally we introduce a little more notation. On $\mfZ$ there are two tautological line bundles given by the determinants of $S$ and $H$. To distinguish between them we put
\begin{align*} \cO(1) & := (\det S)^\vee, \\
\cO\br{1} & := (\det H)^\vee. \end{align*}
\end{notn}

\subsubsection{Analysis of correspondences}
\label{section.analysingcorrespondences}

Before we begin the proofs of our two theorems, let us make an observation about the diagram \eqref{diagram.correspondences}. The left-hand square is trivial in the $\Hom(V,H)$ directions, and at various points in the following proofs we will be considering sheaves and maps that are constant over these trivial directions. Therefore it is helpful to introduce the notation
\begin{equation}\label{diagram.PQsquare}\begin{tikzcd} \mfQ &  \mfP \lar[swap]{\pi} \\ Q \uar[hookrightarrow]{i_Q} & P \uar[hookrightarrow, swap]{i_P} \lar[swap]{\pi} \end{tikzcd}\end{equation}
for the square that we obtain by deleting the $\Hom(V,H)$ directions, so for example $\mfQ$ is the stack $[\Hom(H,V) \;/\; \operatorname{GL}(H)]$. 

Now look at the right-hand square in \eqref{diagram.correspondences}. This square is a fibre product, with the map
$j: Z\to X_+$ being just the restriction of $j:\mfZ \to \mfX$ to the open substack $X_+\subset \mfX$. Furthermore, the map $j:\mfZ\to \mfX$ is trivial in the $\Hom(S,V)$ directions, and removing them gives a map which we denote
$$ j: \mfS \To \mfT.$$
We do some more analysis of these two squares in the appendix.

\subsubsection{Twist}

We now turn to the proof of Theorem \ref{theorem.grassmantwist}. The structure of the proof is as outlined in Section \ref{section.intro_sphericaltwists}: the result follows formally from the existence of a transfer functor, as in the proof of Proposition \ref{prop.shiftequalstwist1}. The functor we need to consider is
$$T_\mfF := \cone{ \mfF\mfR \To \id  } : D^b(\mfX) \To D^b(\mfX)  $$
where
$$\mfF := j_* \pi^* : D^b(\mfY) \To D^b(\mfX) $$
and 
$$\mfR :=\pi_* j^! :  D^b(\mfX) \To D^b(\mfY).$$
Then we just need to establish the following three properties:

\begin{lem} \label{lemma.twisttransferproperty1} $i_{X_-}^* T_\mfF = i_{X_-}^*$
\end{lem}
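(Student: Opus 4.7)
The plan is to show that the natural transformation $\mfF\mfR \to \id$ becomes an isomorphism after applying $i_{X_-}^*$, by the stronger statement that $i_{X_-}^*\mfF = 0$. Taking cones then gives $i_{X_-}^* T_\mfF = \cone{0 \to i_{X_-}^*} = i_{X_-}^*$, as required.

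First I would identify the image of $j: \mfZ \to \mfX$ set-theoretically. A point of $\mfZ$ records data $(A : S \to V,\ B: V \to H,\ C : H \to S)$ with $C$ injective, modulo $\operatorname{GL}(H)\times\operatorname{GL}(S)$. The functor $j$ sends this to $(A, CB) \in \mfX$. Since $C$ factors the map $V \to S$ through the $(r-1)$-dimensional space $H$, the composition $CB$ has rank at most $r-1$. Thus the image of $j$ is contained in the closed substack $\mfX \setminus X_-$, where $V \to S$ fails to have full rank $r$.

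Next I would invoke flat base change along the open immersion $i_{X_-}$. The fibre product $\mfZ \times_\mfX X_-$ is empty by the previous paragraph, so flat base change yields $i_{X_-}^* j_* = 0$ as a functor out of $D^b(\mfZ)$. Consequently $i_{X_-}^* \mfF = i_{X_-}^* j_* \pi^* = 0$, whence $i_{X_-}^* \mfF\mfR = 0$, and the cone computation above finishes the argument.

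The main subtlety, and the only thing that requires care, is the base change step in the stacky setting: one needs that $i_{X_-}$ is flat (which is immediate, as it is an open immersion of Artin stacks) so that pullback commutes with pushforward along $j$. No properness of $j$ is required here because we only need the pullback of $j_*\mathcal{G}$ to an open substack disjoint from the support of $j_*\mathcal{G}$ to vanish. Everything else is essentially formal from the definition of $T_\mfF$ as a cone.
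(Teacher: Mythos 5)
Your proposal is correct and follows the same route as the paper: the paper disposes of this lemma in one line by noting that the image of $j$ is exactly the unstable locus removed to form $X_-$, which forces $i_{X_-}^*\mfF = 0$ and hence $i_{X_-}^* T_\mfF = i_{X_-}^*$ upon taking cones. Your write-up simply spells out the rank argument showing $\operatorname{rank}(CB)\le r-1$ and the (support-based) vanishing of $i_{X_-}^* j_*$, all of which the paper leaves implicit.
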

\begin{lem}\label{lemma.twisttransferproperty2} $T_\mfF$ maps the window $\cW_1$ to the window $\cW_0$.
\end{lem}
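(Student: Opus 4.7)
The plan is to reduce to split-generators and then split into cases. Since $\cW_1$ is split-generated by the bundles $\bbS^\delta S^\vee(1)$ for $\delta\in\Gamma_{d,r}$, and $T_\mfF$ is an exact functor, it suffices to show $T_\mfF(\bbS^\delta S^\vee(1))\in\cW_0$ for each such $\delta$. I would split the argument according to whether $\delta\in\Gamma_{d,r}^{(1)}$ (width strictly less than $d-r$) or $\delta\in\Gamma_{d,r}^{(2)}$ (width exactly $d-r$).

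When $\delta\in\Gamma_{d,r}^{(1)}$, the bundle $\bbS^\delta S^\vee(1)=\bbS^{\tilde\delta}S^\vee$ already lies in $\cW_0$, so it is enough to prove that $\mfR(\bbS^\delta S^\vee(1))=0$; the cone $T_\mfF$ then acts as the identity on this object. To calculate $\mfR=\pi_*j^!$ I would absorb the relative canonical $K_j$ as a Schur-twist and then use the reduction of Section \ref{section.analysingcorrespondences} to replace the pushforward along $\pi$ by the corresponding pushforward along $\mfP\to\mfQ$. A Borel-Weil-Bott style analysis on the fibres of this map, which parametrise $(r-1)$-dimensional subspaces of $S$, then yields the vanishing: the width constraint on $\delta$ is precisely what forces the resulting weight to sit in a singular Weyl chamber.

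For $\delta\in\Gamma_{d,r}^{(2)}$ the same push-pull computation produces a nontrivial object $\mfF\mfR(\bbS^\delta S^\vee(1))$ which is a sheaf (up to shift) supported on the unstable locus $\mathcal{U}=\mfX\setminus X_-$. To rewrite the cone $T_\mfF(\bbS^\delta S^\vee(1))$ as an object of $\cW_0$, I would invoke the explicit locally-free resolutions of such unstable sheaves developed in Appendix \ref{appendix} (the generalised Eagon-Northcott/Buchsbaum-Rim type complexes of Theorem \ref{theorem.freeresolutions}, already illustrated in the case $d=4,r=2$ by \eqref{eqn.G24complex1}-\eqref{eqn.G24complex3}). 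Substituting such a resolution into the cone and simplifying produces a complex whose terms are all of the form $\bbS^\gamma S^\vee$ for various Young diagrams $\gamma$.

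The main obstacle is the combinatorial verification that every $\gamma$ appearing does lie in $\Gamma_{d,r}$. The height bound $\operatorname{height}(\gamma)\leq r$ is automatic from $\dim S=r$, but the width bound $\operatorname{width}(\gamma)\leq d-r$ requires carefully tracking the complementation operation $\comp{d-r}{r}{-}$ through the resolutions together with the column-of-height-$r$ twist that absorbs $\cO(1)$ via $\bbS^\gamma S^\vee\otimes\det S^\vee=\bbS^{\gamma+(1^r)}S^\vee$. This bookkeeping is exactly what the definition of $\Gamma_{d,r}$ and the complementation identity $\bbS^\gamma V^\vee\otimes(\det V)^{\otimes w}=\bbS^{\comp{w}{d}{\gamma}}V$ are set up to control, and I expect the check to go through uniformly once the resolutions of Appendix \ref{appendix} are in hand.
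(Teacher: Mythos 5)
Your plan follows essentially the same route as the paper: split generators, case analysis on width, push-pull through the correspondence, and the resolutions of Appendix~\ref{appendix}. But there is one real gap in the crucial case $\delta\in\Gamma^{(2)}_{d,r}$. You say that ``substituting such a resolution into the cone and simplifying produces a complex whose terms are all of the form $\bbS^\gamma S^\vee$,'' but the simplification is exactly the point that needs proof: the resolution of $\mfF\mfR(\bbS^\delta S^\vee(1))$ has a term $\bbS^\delta S^\vee(1)$ in homological degree zero (this is Remark~\ref{remark.combinatorics}(iii),(v)), and for the cone $T_\mfF(\bbS^\delta S^\vee(1))$ to reduce to the remaining terms you must know that the natural adjunction map $\mfF\mfR(\bbS^\delta S^\vee(1))\to\bbS^\delta S^\vee(1)$, restricted to this degree-zero term, is an \emph{isomorphism} rather than zero or something in between. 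This is not automatic, and it is where the paper does real work: it shows the map is pulled up from $\mfS$ (because the corresponding tautological map $\bbS^\gamma H\to\bbS^\gamma S$ is constant over the $\Hom(S,V)$ directions), then invokes Lemma~\ref{lemma.1dmappingspaces}(i) to conclude the only options are scalar multiples of the identity, and finally rules out the zero map. Without this step you cannot cancel, and the resulting ``complex'' would still contain a redundant $\cW_1$-term, so the lemma as stated would not follow.

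A smaller inaccuracy: in the case $\delta\in\Gamma^{(1)}_{d,r}$, the vanishing $\mfR(\bbS^\delta S^\vee(1))=0$ is not a Borel--Weil--Bott singular-Weyl-chamber phenomenon for the push-down along $\pi$. After absorbing $K_j$ one lands on $\bbS^\gamma S\br{d-r}$ with $\gamma=\comp{d-r}{r}{\delta}$ of height exactly $r$; Proposition~\ref{prop.pushdownsbypi}(i) computes $\pi_*\bbS^\gamma S=\bbS^\gamma H$, which vanishes simply because $\operatorname{height}(\gamma)>\dim H=r-1$ kills the Schur functor, not because a weight becomes singular. The BWB analysis in the paper (Lemma~\ref{lem.staircase}) belongs to the construction of the resolutions of Theorem~\ref{theorem.freeresolutions}, not to this vanishing step. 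This is a matter of attribution rather than a fatal error, but worth keeping straight since the two mechanisms behave differently if you try to generalize.
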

\begin{lem} \label{lemma.twisttransferproperty3} The following diagram commutes:
 $$\begin{tikzcd}
\cW_1 \rar{T_\mfF} \dar[swap]{i_{X_+}^*} & D^b(\mfX) \dar{i_{X_+}^*} \\ 
 D^b(X) \rar{T_F} & D^b(X) 
\end{tikzcd}$$\end{lem}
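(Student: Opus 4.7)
The plan is to proceed formally in the same manner as the proof of Proposition \ref{prop.shiftequalstwist1}. Since both $T_\mfF$ and $T_F$ are cones on counit maps which are natural in their arguments, it suffices to produce, for every $\cE \in \cW_1$, an isomorphism
\[i_{X_+}^* \mfF \mfR\, \cE \; \isoto \; F R\, i_{X_+}^* \cE\]
compatible with the respective counits landing in $i_{X_+}^* \cE$. Unfolding the definitions, this amounts to comparing
\[i_{X_+}^* j_* \pi^* \pi_* j^! \cE \quad \mbox{and} \quad j_* \pi^* \pi_* j^! i_{X_+}^* \cE.\]

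First I would handle the right-hand square of diagram \eqref{diagram.correspondences}, which is Cartesian by Section \ref{section.analysingcorrespondences} and in which $i_{X_+}$ is an open immersion. Flat base change then supplies natural isomorphisms $i_{X_+}^* j_* \cong j_* i_Z^*$ and $i_Z^* j^! \cong j^! i_{X_+}^*$ (the latter by compatibility of the relative dualizing complex of $j$ with open restriction), and combining these with the tautological $i_Z^* \pi^* \cong \pi^* i_{Y_+}^*$ reduces the desired identification to the single comparison
\[i_{Y_+}^* \pi_*\bigl(j^! \cE\bigr) \; \To \; \pi_*\, i_Z^*\bigl(j^! \cE\bigr).\]

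The main obstacle is verifying this last comparison, because the left-hand square of \eqref{diagram.correspondences} is \emph{not} Cartesian in general: a point of the fibre product $\mfZ \times_\mfY Y_+$ records a factorisation $H \hookrightarrow S \to V$ for which only the composition $H \to V$ is required to be injective, whereas $Z$ additionally demands that $S \to V$ itself be injective. Standard flat base change therefore does not apply, and the isomorphism must come from a direct cohomology calculation. Using the observation of Section \ref{section.analysingcorrespondences} that the left-hand square is trivial in the $\Hom(V,H)$ directions, I would first reduce the problem to the smaller square \eqref{diagram.PQsquare}; on the fibres of $\mfP \to \mfQ$ the statement then becomes a Borel--Weil--Bott-type vanishing for the Schur power bundles $\bbS^\delta S^\vee$, twisted by $\cO(1)$ and by the relative dualizing complex of $j$. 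The window constraints $\delta \in \Gamma_{d,r}$ (width $\leq d-r$, height $\leq r$) on the generators of $\cW_1$ are tuned precisely to land in the range where this vanishing holds; this generalises the observation in the $r=1$ case that $\cO(-1)$ and $\cO$ (but crucially not $\cO(-2)$) have matching cohomology on the stack $[V/\C^*]$ and on $\P V$.

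Once the base change isomorphism has been established on $\cW_1$, compatibility of the two counit maps follows from naturality of base change, completing the verification.
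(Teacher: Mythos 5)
Your proposal is correct and follows essentially the same route as the paper's proof: flat base change on the Cartesian right-hand square, observing that the left-hand square is not Cartesian so that only a comparison map $i_{Y_+}^*\pi_* \to \pi_* i_Z^*$ is available, reducing via $\Hom(V,H)$-triviality to the square \eqref{diagram.PQsquare}, and checking the map is an isomorphism on the generators of $\cW_1$ using the width bound on $\gamma = \comp{d-r}{r}{\delta}$. The one small misattribution is methodological: the paper establishes the required base-change isomorphism not by a Borel--Weil--Bott computation but by the more elementary filtration argument of Proposition \ref{prop.pushdownsbypi}(ii), filtering $\bbS^\gamma S$ by $\bbS^\alpha H \otimes L^{\otimes t}$ and noting that the width constraint excludes the powers $L^{\otimes t}$ with $t \geq d-r+1$ whose push-downs differ on stack versus variety; Borel--Weil--Bott is reserved for the resolution in Theorem \ref{theorem.freeresolutions} used elsewhere.
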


Lemma \ref{lemma.twisttransferproperty1} is obvious, since the image of $j$ is exactly the unstable locus that gets deleted to form $X_-$. The remaining two lemmas are rather more involved.

\begin{proof}[Proof of Lemma \ref{lemma.twisttransferproperty2}]
We need to calculate the effect of $T_\mfF=[\mfF\mfR \to \id]$ on the vector bundles
$$\setconds{ \bbS^\delta S^\vee(1) }{ \delta\in \Gamma_{d,r} }$$
and verify that each one ends up in the window $\cW_0$.  Recall that
$$\mfF\mfR = j_*\pi^*\pi_* j^!.$$
We will send our vector bundles through each of these functors in turn. The first one is
  \begin{equation}\label{equation.j^!}j^!(-) = (K_j\otimes j^*(-))[\dim j] = j^*(-)(r-d-1)\br{d-r}[r-d-1]. \end{equation}
Note that the calculation of the canonical bundle here is straightforward as the spaces involved are open substacks of quotients of vector spaces: recall also that $\det V$ is trivialized. Let $\gamma = \comp{d-r}{r}{\delta}$, so
$$\bbS^\delta S^\vee (1)= \bbS^\gamma S(d-r+1)$$ 
and then we have
\begin{equation}\label{equation.j^!onbundle} j^! (\bbS^\delta S^\vee(1)) = \bbS^\gamma S \br{d-r} [r-d-1].\end{equation}
Next we apply $\pi_*$ to this object, and by the projection formula it suffices to know what $\pi_* \bbS^\gamma S$ is. Everything here is constant along the $\Hom(V,H)$ directions in $\mfZ$ and $\mfY$, so Proposition \ref{prop.pushdownsbypi}(i) tells us that 
$$\pi_* \bbS^\gamma S = \bbS^\gamma H$$
 and so
\begin{equation}\label{equation.pi_*Rj^!}
 \pi_* j^! (\bbS^\delta S^\vee(1)) = \bbS^\gamma H \br{d-r} [r-d-1]. 
\end{equation}
This expression will be zero iff the height of $\gamma$ is $r$, or equivalently iff the width of $\delta$ is less than $d-r$. So if $\delta\in \Gamma_{d,r}^{(1)}$ then 
$$T_\mfF(\bbS^\delta S^\vee(1)) = \bbS^\delta S^\vee(1).$$
Since these bundles are the ones that also lie in the target window $\cW_0$, we have verified the lemma on this subset of our generating set. 

For the remaining bundles, we continue with the calculation of $T_\mfF$. Let $\delta\in \Gamma_{d,r}^{(2)}$, so the height of $\gamma$ is $<r$ and we can define $\hat{\delta} = \comp{d-r}{r-1}{\gamma}$, which is $\delta$ with its first row deleted. Then
$$\bbS^\gamma H \br{d-r} = \bbS^{\hat{\delta}} H^\vee$$
and 
$$\mfF\mfR (\bbS^\delta S^\vee(1) ) = j_*(\bbS^{\hat{\delta}} H^\vee) [r-d-1].$$
This is a (shift of a) torsion sheaf on $\mfX$. Since everything here is constant in the $\Hom(S,V)$ directions, the sheaf is evidently the pull-up of the sheaf $j_*(\bbS^{\hat{\delta}} H^\vee)$ from the stack $\mfT$. In Appendix \ref{appendix} (Theorem \ref{theorem.freeresolutions}) we construct a free resolution of $j_*(\bbS^{\hat{\delta}} H^\vee)$, of length $d-r+1$. If we pull this resolution up to $\mfX$ and shift it by $[r-d-1]$ then we get a complex of vector bundles situated in non-negative degrees
\begin{equation}\label{equation.complexforFR} \complex{ \begin{tikzcd}[baseline=-5pt] \underline{\bbS^{\hat{\delta}_{K}} S^\vee \otimes \Wedge^{s_{K}} V}  \rar \nextcell \ldots  \rar \nextcell\bbS^{\hat{\delta}_1} S^\vee \otimes \Wedge^{s_1} V  \rar \nextcell   \bbS^{\hat{\delta}_0} S^\vee \end{tikzcd} } \end{equation}
which is quasi-isomorphic to $\mfF\mfR (\bbS^\delta S^\vee(1))$. Remarks \ref{remark.combinatorics} following Theorem \ref{theorem.freeresolutions} tell us more about the terms in this complex. Firstly, by Remark \ref{remark.combinatorics}(v), $\hat{\delta}_K$ is the Young diagram of height $r$ and width $d-r+1$ such that if we delete the first row and the first column we get back $\hat{\delta}$, so we find that
$$\bbS^{\hat{\delta}_K} S^\vee = \bbS^\delta S^\vee (1).$$
Also $s_K=d$ by Remark \ref{remark.combinatorics}(iii), and $\det(V)$ is trivialized, so the term in degree zero is $\bbS^\delta S^\vee (1)$. Secondly, for $k<K$ we have $\hat{\delta}_k\in \Gamma_{d,r}$ by Remark \ref{remark.combinatorics}(iv), so every term in positive degree lies in the target window $\cW_0$.

From this description, we see that the natural map $\mfF\mfR (\bbS^\delta S^\vee(1)) \to \bbS^\delta S^\vee(1)$ is given by some non-zero map of bundles
$$\iota: \bbS^\delta S^\vee(1) \To \bbS^\delta S^\vee(1) $$
since there are no higher Ext groups between vector bundles on the stack $\mfX$. This map arises, via adjunction, from the natural map
$$\pi^*\pi_*j^!(\bbS^\delta S^\vee(1)) \To j^!(\bbS^\delta S^\vee(1)) $$
which is (by \eqref{equation.j^!onbundle} and \eqref{equation.pi_*Rj^!}) a shift and twist of a map
$$\bbS^\gamma H \To \bbS^\gamma S.$$
It follows from the proof of Proposition \ref{prop.pushdownsbypi} that this is actually the tautological map, so in particular it is constant over the $\Hom(S,V)$ directions. Therefore the map $\iota$ must also be constant in those directions, and so is the pull-up of a map that lives on the smaller stack $\mfS$. By Lemma \ref{lemma.1dmappingspaces}, it must be an isomorphism. Consequently, the cone $T_\mfF(\bbS^\delta S^\vee (1))$ is quasi-isomorphic to the positive-degree part of the complex \eqref{equation.complexforFR}, so it lives in the target window $\cW_0$.
\end{proof}

\begin{proof}[Proof of Lemma \ref{lemma.twisttransferproperty3}]
Recall the diagram \eqref{diagram.correspondences}. $T_\mfF$ is the cone $[j_*\pi^*\pi_*j^! \to \id ]$ of endofunctors of $D^b(\mfX)$, and $T_F$ is the same cone of endofunctors of $D^b(X_+)$. We wish to compare $i_{X_+}^* T_\mfF$ with $T_F i_{X_+}^*$. 

The right-hand square in \eqref{diagram.correspondences} is a fibre square and the open inclusion $i_{X^+}$ is flat, so
$$ i_{X_+}^* j_* = j_* i_Z^* $$
and hence 
$$ i_{X_+}^*j_*\pi^*\pi_*j^! = j_*i_Z^*\pi^*\pi_*j^!  = j_*\pi^*i_{Y_+}^*\pi_*j^!.$$
The left-hand square is not a fibre square, so we have only a natural transformation
$$ \tau: i_{Y_+}^*\pi_* \To \pi_* i_Z^*.$$
 Then for any $\mathcal{E}\in D^b(\mfX)$ we get a morphism
\begin{equation} \label{equation.nattranstau}
\begin{tikzcd}[column sep=70pt] j_*\pi^*i_{Y_+}^*\pi_*j^!\cE \rar{j_*\pi^*(\tau_{j^!\cE})} & j_*\pi^*\pi_*i_Z^*j^!\cE = j_*\pi^*\pi_*j^!i_{X_+}^*\cE \end{tikzcd}
\end{equation}
where the final equality holds because $i_{X_+}$ and $i_Z$ are open inclusions. Thus we have a square
$$\begin{tikzcd} i_{X_+}^*j_*\pi^*\pi_*j^!\cE   \rar  \dar[swap]{j_*\pi^*(\tau_{j^!\cE})} &  i_{X_+}^*\cE \dar[equals] \\
j_*\pi^*\pi_*j^!i_{X_+}^*\cE \rar &  i_{X_+}^*\cE \end{tikzcd}$$
which commutes by naturality of adjunctions. This means we have a natural transformation from $i_{X_+}^*T_\mfF$ to $T_F i_{X_+}^*$. We claim that this becomes a natural isomorphism when we restrict it to the window $\cW_1$. It is sufficient to check this on the generating vector bundles, i.e. we just need to check that \eqref{equation.nattranstau} is an isomorphism when $\cE$ is a vector bundle $\bbS^\gamma S (d-r+1)$ for some $\gamma \in \Gamma_{d,r} $. By \eqref{equation.j^!} in the proof of Lemma \ref{lemma.twisttransferproperty2} we know that $j^!\cE$ is a shift of the bundle 
$$\bbS^\gamma S\br{d-r}$$
so it is sufficient to prove that
$$i_{Y_+}^*\pi_* \bbS^\gamma S \xrightarrow{\tau_{\bbS^\gamma S}} \pi_* i_Z^* \bbS^\gamma S $$
is an isomorphism. Everything here is constant in the $\Hom(V,H)$ directions, so we can actually work on the smaller square \eqref{diagram.PQsquare}, and Proposition \ref{prop.pushdownsbypi}(ii) is the required statement.\end{proof}

\subsubsection{Cotwist}

Now we prove Theorem \ref{theorem.grassmancotwist}. The structure of the proof is exactly the same, although curiously enough in this case the transfer functor is
$$\mfR\mfF : D^b(\mfY) \To D^b(\mfY),$$
i.e. we do not take the cone from the identity. 

We denote the windows on $\mfY$ by $\cV_k$, where each $\cV_k$ is split-generated by the set
$$\setconds{ \bbS^\delta H^\vee \br{k} }{ \delta\in \Gamma_{d, r-1} }.$$
The three lemmas that we need are:

\begin{lem} \label{lemma.cotwisttransferproperty1} $i_{Y_-}^* \mfR\mfF = i_{Y_-}^*$ when restricted to the window $\cV_0$.
\end{lem}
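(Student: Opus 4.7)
My plan is to reduce the statement to a fibrewise vanishing on $\pi : \mfZ_- \to Y_-$, via two successive flat base changes which exploit the fact that the correspondence $\mfZ$ restricted over $Y_-$ degenerates to a smooth closed immersion into an open substack of $\mfX$. First, since $i_{Y_-}$ is an open inclusion and $\pi^* \bbS^\lambda H^\vee = \bbS^\lambda H^\vee$, flat base change yields
$$i_{Y_-}^* \mfR \mfF (\bbS^\lambda H^\vee) \;=\; \pi_* \, i_{\mfZ_-}^* j^! j_* \, \bbS^\lambda H^\vee,$$
where $\mfZ_- := \pi^{-1}(Y_-)$ is the locus of $\mfZ$ on which $\beta : V \to H$ is surjective. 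Letting $\mfX_- \subset \mfX$ be the open substack where $\delta := \gamma\beta$ has rank $\geq r-1$, one has $j^{-1}(\mfX_-) = \mfZ_-$ (since $\gamma$ injective forces the rank of $\gamma\beta$ to equal that of $\beta$), so a second flat base change gives $i_{\mfZ_-}^* j^! j_* = (j_-)^! (j_-)_* i_{\mfZ_-}^*$ for the restriction $j_- : \mfZ_- \to \mfX_-$.

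The key geometric observation is that $j_-$ factors as an isomorphism $\phi : \mfZ_- \xrightarrow{\sim} U$ followed by a closed immersion $i_U : U \hookrightarrow \mfX_-$, where $U \subset \mfX_-$ is the substack on which $\delta$ has rank exactly $r - 1$: any surjection-injection factorization of a rank-$(r-1)$ map is unique up to the $\operatorname{GL}(H)$-ambiguity already quotiented out in $\mfZ$. Since $i_U$ is a smooth closed immersion of codimension $c = d - r + 1$ with normal bundle $N = K^\vee \otimes C$ for $K = \ker \delta$ and $C = S/\Im \delta$, the self-intersection formula yields
$$(j_-)^! (j_-)_* \bbS^\lambda H^\vee \;\cong\; \bigoplus_{k=0}^{c} \wedge^k N \otimes \bbS^\lambda H^\vee \,[-k]$$
on $\mfZ_-$, after using $\phi$ to identify $H \cong \Im \delta$.

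To finish, note that on $\mfZ_-$ the bundles $K = \ker\beta$ and $H$ are pulled back from $Y_-$, whereas $C = S/H$ lives in the fibre directions of $\pi$. By the projection formula the $k$-th summand pushes forward to $\wedge^k K_{Y_-}^\vee \otimes \bbS^\lambda H^\vee \otimes \pi_*(C^k)$, so the whole computation hinges on the vanishings $\pi_*(\cO_{\mfZ_-}) = \cO_{Y_-}$ and $\pi_*(C^k) = 0$ for $0 < k \leq c$. These will follow from identifying the fibre of $\pi$ as a quotient stack of the form $[\mathbb{A}^{c}/\operatorname{GL}_1]$ on which $C$ is the positive-weight generator line bundle; establishing this, via a careful analysis of the $\operatorname{GL}(S)$-stabilisers of points of $\mfZ_-$, is the main technical obstacle. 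Once done, only the $k = 0$ summand survives and produces $\bbS^\lambda H^\vee$ on $Y_-$, as required. In fact this argument never uses the constraint $\lambda \in \Gamma_{d,r-1}$, so the restriction to $\cV_0$ in the statement simply reflects the intended application.
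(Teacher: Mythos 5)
Your strategy is genuinely different from the paper's and, in outline, quite attractive: where the paper proceeds combinatorially (writing out the explicit free resolution from Theorem \ref{theorem.freeresolutions} and tracking the unit of adjunction against it), you observe that over $Y_-$ the correspondence degenerates to a regular closed immersion $j_-:\mfZ_-\hookrightarrow\mfX_-$, and try to finish with the self-intersection formula and a fibrewise vanishing. However, as submitted this is a \emph{plan}, not a proof: you explicitly flag the central vanishing $\pi_*(C^k)=0$, $0<k\leq c$, as ``the main technical obstacle'' and leave it unestablished. That vanishing is, in fact, available --- it is precisely what the proof of Proposition \ref{prop.pushdownsbypi}(i) establishes for $L=S/H$, and it transports to $Y_-$ by (the $\Hom(V,H)$-trivial) flat base change --- so it is not an obstacle so much as a reference you needed. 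But there are genuine inaccuracies to fix before the argument is complete.

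First, your description of the fibres of $\pi:\mfZ_-\to Y_-$ as uniformly $[\mathbb{A}^{c}/\operatorname{GL}_1]$ is wrong. Working it out, the fibre over a point $(h:H\to V,\;b:V\twoheadrightarrow H)\in Y_-$ is $[\Hom(L,V/\Im h)/\operatorname{GL}(L)]$ (with $L=S/H$), whose dimension is $d-\operatorname{rank}(h)$; this equals $c=d-r+1$ only where $h$ has full rank $r-1$, and jumps on the locus where $h$ degenerates. Fortunately the weight count ($C$ has $\operatorname{GL}(L)$-weight $+1$, the affine directions have weight $-1$) is insensitive to this jump, so $\pi_* C^k=0$ for $k>0$ and $\pi_*\cO=\cO$ still hold; but your analysis, as written, would be incorrect and needs the correct fibre description.

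Second, writing $(j_-)^!(j_-)_*\cF\cong\bigoplus_k\wedge^k N\otimes\cF[-k]$ as a \emph{direct sum} is an unjustified strengthening. For a regular closed immersion this splitting depends on nontrivial hypotheses (cf.\ Arinkin--C\u ald\u araru); what one always has is the truncation filtration with these graded pieces, and that is enough for your spectral-sequence argument once the $k>0$ pieces are killed by $\pi_*$. You should weaken the claim accordingly.

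Third --- and this is the point that should give you the most pause --- you conclude that ``this argument never uses the constraint $\lambda\in\Gamma_{d,r-1}$,'' so that the lemma would hold on all of $D^b(\mfY)$. The paper explicitly states the opposite: ``unlike the twist case the first lemma is not obvious, and only holds on the window $\cV_0$ and not on the whole of $D^b(\mfY)$.'' Before declaring the constraint superfluous you should locate exactly where your argument would purportedly generalise --- and reconcile that with the paper's warning. Either there is a step in your chain (the two base changes, the identification $\mfZ_-\cong U$, the normal-bundle identification $N\cong K^\vee\otimes C$, the filtration, or the projection formula) that silently uses the width bound on $\lambda$ and you haven't noticed it, or you should produce a careful check that the claim really does hold for, say, a single $\lambda$ of width $d-r+2$ in a small example such as $d=3$, $r=2$. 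As it stands, a proof that contradicts an explicit remark in the paper without addressing the discrepancy cannot be accepted at face value.
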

\begin{lem}\label{lemma.cotwisttransferproperty2} $\mfR\mfF$ maps the window $\cV_0$ to the window $\cV_{-1}$.
\end{lem}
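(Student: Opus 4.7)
The plan is to mirror the proof of Lemma \ref{lemma.twisttransferproperty2}, but with the order of $j_*$ and $j^!$ reversed. It suffices to compute
$$\mfR\mfF(\bbS^\delta H^\vee) \;=\; \pi_* \, j^! \, j_* \, \pi^* \bbS^\delta H^\vee$$
for each generator $\bbS^\delta H^\vee$ of $\cV_0$ with $\delta \in \Gamma_{d,r-1}$, and to check that the answer lies in $\cV_{-1}$, i.e.\ is split-generated by $\bbS^\nu H^\vee \br{-1}$ with $\nu \in \Gamma_{d,r-1}$.

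First I would observe that $\pi^* \bbS^\delta H^\vee$ on $\mfZ$ is constant in both the $\Hom(S,V)$ and the $\Hom(V,H)$ directions, so that $j_* \pi^* \bbS^\delta H^\vee$ on $\mfX$ is pulled up from the sheaf $j_*\bbS^\delta H^\vee$ on the smaller stack $\mfT$ (cf.\ Section \ref{section.analysingcorrespondences}). This brings us into a setting where Theorem \ref{theorem.freeresolutions} of the appendix supplies an explicit locally-free resolution
$$0 \To \bbS^{\delta_M}S^\vee \otimes \Wedge^{t_M}V \To \cdots \To \bbS^{\delta_0}S^\vee \To j_*\bbS^\delta H^\vee \To 0$$
on $\mfT$, which then pulls up to $\mfX$. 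I apply $j^!(-) = j^*(-)\otimes K_j[\dim j]$ termwise, using the same calculation $K_j = \cO(r-d-1)\br{d-r}$ and $\dim j = r-d-1$ as in \eqref{equation.j^!}, obtaining a complex of vector bundles on $\mfZ$ of the form $\bbS^{\delta_k}S^\vee \otimes \Wedge^{t_k}V \otimes K_j$ (with an overall shift). Finally, since this complex is constant in the $\Hom(V,H)$ directions, I can compute $\pi_*$ via base change along the square \eqref{diagram.PQsquare} on the smaller stack $\mfP$, where Proposition \ref{prop.pushdownsbypi}(i) translates each Schur power of $S$ into the corresponding Schur power of $H$.

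The main obstacle is the combinatorial bookkeeping. After Schur-functor manipulations, each surviving term should take the form $\bbS^{\mu_k} H^\vee \br{-1}$ with $\mu_k \in \Gamma_{d,r-1}$, and I would verify this in the same spirit as Lemma \ref{lemma.twisttransferproperty2}. Terms in which the rewritten $S$-diagram has height $<r$ push forward to zero by Proposition \ref{prop.pushdownsbypi}, so only those $\delta_k$ yielding a full-height diagram contribute; for these, the trimming/complementation arguments of Remarks \ref{remark.combinatorics}(iv),(v) keep the resulting $H$-diagrams within $\Gamma_{d,r-1}$. The determinantal contributions of $K_j = (\det S)^{d+1-r}(\det H)^{r-d}$ combine with the $(\det S)$-shift implicit in the complementation identity $\bbS^{\delta_k}S^\vee \otimes (\det S)^w = \bbS^{\comp{w}{r}{\delta_k}} S$ so as to leave a uniform $\br{-1} = \det H$ twist on every term, placing the resulting complex in the target window $\cV_{-1}$ as required.
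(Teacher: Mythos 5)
Your overall strategy is the same as the paper's: pull $j_*\pi^*\bbS^\delta H^\vee$ down from the small stack $\mfT$, resolve it via Theorem~\ref{theorem.freeresolutions}, apply $j^!$ termwise (this is exactly what Corollary~\ref{corollary.j^!j_*} packages), rewrite as Schur powers of $S$ via the complementation identity, and push down with $\pi_*$ using Proposition~\ref{prop.pushdownsbypi}(i). That much is correct.

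However there is a genuine error in the final step: your vanishing criterion is inverted. After rewriting, the terms of the complex are $\bbS^{\epsilon_k}S$ with $\epsilon_k = \comp{d-r+1}{r}{\delta_k}$, and $\pi_*\bbS^{\epsilon_k}S = \bbS^{\epsilon_k}H$. Since $H$ has dimension $r-1$, this vanishes precisely when $\operatorname{height}(\epsilon_k) = r$, not when it is $<r$. Your sentence ``terms in which the rewritten $S$-diagram has height $<r$ push forward to zero... so only those $\delta_k$ yielding a full-height diagram contribute'' is exactly backwards: it is the $\epsilon_k$ of height $<r$ that survive. This inversion also obscures the necessary case analysis, which you omit. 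For $\delta\in\Gamma_{d,r-1}^{(1)}$ (width $<d-r+1$), Remark~\ref{remark.combinatoricsdual}(iv*) gives that $\epsilon_k$ has height $r$ for all $k<K$, so \emph{everything} except the top term dies and $\mfR\mfF(\bbS^\delta H^\vee) = \bbS^\delta H^\vee$ on the nose. For $\delta\in\Gamma_{d,r-1}^{(2)}$ (width $=d-r+1$), Remark~\ref{remark.combinatoricsdual}(vi*) gives that every $\epsilon_k$ has height $\leq r-1$, so \emph{nothing} dies and you get a genuine complex, each term of the form $\bbS^{\hat{\delta}_k}H^\vee\br{-1}$ with $\hat{\delta}_k=\comp{d-r+1}{r-1}{\epsilon_k}\in\Gamma_{d,r-1}$. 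Relatedly, the combinatorial facts you need are the starred ones in Remark~\ref{remark.combinatoricsdual} (about the complements $\epsilon_k$), not Remarks~\ref{remark.combinatorics}(iv),(v) (about the $\delta_k$, used in the twist proof). Fixing the direction of vanishing and supplying the two-case analysis completes the argument.
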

\begin{lem} \label{lemma.cotwisttransferproperty3} The following diagram commutes:
 $$\begin{tikzcd}[column sep=35pt, row sep=35pt]
\cV_0 \rar{\mfR\mfF} \dar[swap]{i_{Y_+}^*} & D^b(\mfY) \dar{i_{Y_+}^*} \\ 
 D^b(Y_+) \rar{C_F^{\dagger}[-\sigma]} & D^b(Y_+) 
\end{tikzcd}$$\end{lem}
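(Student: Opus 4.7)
The plan is to mirror the proof of Lemma \ref{lemma.twisttransferproperty3}, with the extra task of producing, from the plain composition $\mfR\mfF$, the shifted cone structure of $C_F^\dagger[-\sigma] = \cone{LF \to \id}[-\sigma-1]$.

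First, using the flat base change on the (fibre) right-hand square of \eqref{diagram.correspondences} (to obtain both $i_{X_+}^* j_* = j_* i_Z^*$ and $i_Z^* j^! = j^! i_{X_+}^*$), together with the identity $i_Z^* \pi^* = \pi^* i_{Y_+}^*$ and the natural transformation $\tau: i_{Y_+}^* \pi_* \To \pi_* i_Z^*$ coming from the non-fibre left-hand square, I would construct a natural transformation
$$\alpha : i_{Y_+}^* \mfR\mfF \To \pi_* j^! j_* \pi^* i_{Y_+}^* = RF \cdot i_{Y_+}^*$$
by applying $\tau_{j^! j_* \pi^* \cE}$ and then commuting $i_Z^*$ past $j^!$ and $j_*$; this is the direct analogue of the natural transformation used in Lemma \ref{lemma.twisttransferproperty3}. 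Because $R = L[-\sigma]$, the defining triangle of $C_F^\dagger$ rewrites as
$$i_{Y_+}^* \cE[-\sigma-1] \To C_F^\dagger[-\sigma] \cdot i_{Y_+}^* \cE \To RF \cdot i_{Y_+}^* \cE \To i_{Y_+}^* \cE [-\sigma],$$
so it is enough to show that, for each $\cE \in \cV_0$, the fibre of $\alpha_\cE$ is naturally isomorphic to $i_{Y_+}^* \cE[-\sigma-1]$, and that the induced connecting morphism agrees (under $R = L[-\sigma]$) with the counit of the adjunction $L \dashv F$.

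As in the proofs of Lemmas \ref{lemma.twisttransferproperty2} and \ref{lemma.cotwisttransferproperty2}, I would verify this on the generating bundles $\cE = \bbS^\delta H^\vee$, $\delta \in \Gamma_{d,r-1}$ of $\cV_0$. The computation underlying Lemma \ref{lemma.cotwisttransferproperty2} expresses $\mfR\mfF(\bbS^\delta H^\vee)$ via the free resolutions of sheaves supported on the unstable loci in $\mfX$ provided by Theorem \ref{theorem.freeresolutions}, and Proposition \ref{prop.pushdownsbypi} describes the relevant derived pushforwards along $\pi: \mfP \To \mfQ$. After restriction to $Y_+$, the terms of this resolution should separate into a piece that---via Proposition \ref{prop.pushdownsbypi}---matches a corresponding resolution of $RF \cdot i_{Y_+}^* \cE$ over $Y_+$, and a residual contribution supported over the unstable locus $\mfY \setminus Y_+$; the failure of $\tau_{j^! j_* \pi^* \cE}$ to be an isomorphism is concentrated precisely in this residue.

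The main obstacle will be identifying this residue with $i_{Y_+}^* \cE[-\sigma-1]$ and---more delicately---verifying that the induced connecting map is the expected counit of $L \dashv F$. The shift $[-\sigma-1]$ should emerge from Serre duality in the fibres of $\pi$ over the unstable locus, equivalently from the twist by $K_\pi$ in the definition of $L$. However, matching the connecting morphism, rather than merely showing that the objects agree up to shift, will require carefully tracking the boundary morphisms in the resolution from Theorem \ref{theorem.freeresolutions} and combining them with the explicit form of $\tau$ together with the counit of $L \dashv F$, in a manner parallel to (but more involved than) the identification of $\iota$ with the tautological map in the proof of Lemma \ref{lemma.twisttransferproperty2}.
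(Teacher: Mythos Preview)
Your overall strategy is correct and is essentially what the paper does: build the comparison map $\alpha: i_{Y_+}^*\mfR\mfF \to RF\,i_{Y_+}^*$ exactly as in Lemma~\ref{lemma.twisttransferproperty3}, then show that on each generator $\bbS^\delta H^\vee$ the pair of maps
\[
i_{Y_+}^*\mfR\mfF(\bbS^\delta H^\vee)\;\xrightarrow{\alpha}\;RF\,i_{Y_+}^*(\bbS^\delta H^\vee)\;\To\;i_{Y_+}^*\bbS^\delta H^\vee[-\sigma]
\]
is an exact triangle, using the explicit complex for $j^!j_*\pi^*\bbS^\delta H^\vee$ from Corollary~\ref{corollary.j^!j_*} and Proposition~\ref{prop.pushdownsbypi}.

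However, your description of \emph{where} the residual term comes from is off, and if you followed it literally you would be looking in the wrong place. After restricting to $Y_+$ there is nothing supported on $\mfY\setminus Y_+$; the failure of $\tau$ to be an isomorphism lives entirely over $Y_+$. The source of the extra term $i_{Y_+}^*\bbS^\delta H^\vee[-\sigma]$ is the difference between the stacky fibres of $\pi:\mfP\to\mfQ$ (no higher cohomology) and the projective fibres of $\pi:P\to Q$. Concretely, Remark~\ref{remark.combinatoricsdual}(ii*) says that among the bundles $\bbS^{\epsilon_k}S$ appearing in the complex, all have width $\leq d-r$ except $\epsilon_0$, which has width exactly $d-r+1$. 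So Proposition~\ref{prop.pushdownsbypi}(ii) gives that $\tau$ is an isomorphism on every term except the last, and Proposition~\ref{prop.pushdownsbypi}(iii) produces the single extra contribution $\bbS^{\tilde{\epsilon}_0}H\otimes\det H^\vee[r-d]$, sitting in complex-degree $K=d-r+1$; twisting by $\br{d-r}$ identifies this with $\bbS^\delta H^\vee[-\sigma]$.

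For the connecting map, the paper does not compute a boundary in the resolution and compare it to the counit after the fact. Instead it identifies the map $RF\to\id[-\sigma]$ directly: it arises from the counit $j^*j_*\pi^*\bbS^\delta H^\vee\to\pi^*\bbS^\delta H^\vee$, which on the degree-zero term is a nonzero map $\bbS^\delta S^\vee\to\bbS^\delta H^\vee$ constant in the $\Hom(S,V)$ directions, hence by Lemma~\ref{lemma.1dmappingspaces}(ii) the tautological one. Twisting by $K_j=K_\pi$ this becomes a map $\bbS^{\epsilon_0}S\br{d-r}\to\pi^!\bbS^\delta H^\vee[-\sigma]$, and the discussion immediately following Proposition~\ref{prop.pushdownsbypi} shows that its $\pi_*$--$\pi^!$ adjoint is precisely the second arrow in the exact triangle of part~(iii). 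That is what makes the last column of the big diagram an exact triangle, and hence the whole thing one.
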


Notice that unlike the twist case the first lemma is not obvious, and only holds on the window $\cV_0$ and not on the whole of $D^b(\mfY)$. We will prove the first two lemmas in reverse order.

\begin{proof}[Proof of Lemma \ref{lemma.cotwisttransferproperty2}]
We compute the effect of $\mfR\mfF$ on the generating vector bundles
$$\setconds{ \bbS^{\delta} H^\vee }{ \delta\in \Gamma_{d,r-1} }$$
of the window $\cV_0$. By Corollary \ref{corollary.j^!j_*} we have that $j^!j_*\pi^* \bbS^{\delta} H^\vee$ is the complex
\begin{equation}\label{equation.j^!j_*pi^*ofH} \complex{ \begin{tikzcd}[column sep=14pt]  \underline{\bbS^{\epsilon_{K}} S\br{d-r}\otimes \Wedge^{s_{K}} V}   \rar \nextcell \ldots \rar \nextcell  \bbS^{\epsilon_1} S \br{d-r} \otimes \Wedge^{s_1} V \rar \nextcell   \bbS^{\epsilon_0} S\br{d-r} \end{tikzcd} } \end{equation}
where $K=d-r+1$, and the partitions $\epsilon_k$ and the numbers $s_k$ are defined by Algorithm \ref{algorithm} and \eqref{equation.epsilon_defn}.

 Suppose that $\delta\in \Gamma_{d,r-1}^{(1)}$, i.e. width$(\delta)<d-r+1$. Then Remark \ref{remark.combinatoricsdual}(iv*) tells us that all the $\epsilon_k$'s except for $\epsilon_K$ have height $r$, and Remark \ref{remark.combinatoricsdual}(iii*) that $s_K=d$ and hence $\Wedge^{s_{K}} V$ is trivial.  So by Proposition \ref{prop.pushdownsbypi} applying $\pi_*$ to \eqref{equation.j^!j_*pi^*ofH} kills all the terms in positive degree, and leaves only
$$ \pi_*(\bbS^{\epsilon_K} S)\br{d-r} = \bbS^{\epsilon_K} H \br{d-r}  = \bbS^\delta H^\vee $$
where the final equality is because of Remark \ref{remark.combinatoricsdual}(v*). So if $\delta\in \Gamma_{d,r-1}^{(1)}$ then
$$\mfR\mfF(\bbS^\delta H^\vee) = \bbS^\delta H^\vee.$$ 
These bundles already lie in the target window $\cV_{-1}$, so this verifies the lemma on this subset.

Now take $\delta \in  \Gamma_{d,r-1}^{(2)}$. By Proposition \ref{prop.pushdownsbypi} again we have that for any $k$
$$\pi_*(\bbS^{\epsilon_k} S)\br{d-r} = \bbS^{\epsilon_k} H \br{d-r} = \bbS^{\hat{\delta}_k} H^\vee\br{-1}$$
where $\hat{\delta}_k = \comp{d-r+1}{r-1}{\epsilon_k}$, which is well-defined by Remark \ref{remark.combinatoricsdual}(vi*). So we have represented $\mfR\mfF(\bbS^{\delta} H^\vee)$ by a complex of bundles, with each term lying in the target window $\cV_{-1}$.
\end{proof}

\begin{proof}[Proof of Lemma \ref{lemma.cotwisttransferproperty1}]

Composing $i_{Y_-}^*$ with the unit of the adjunction gives a natural transformation
\begin{equation}\label{eqn.unitofadjunction}  i_{Y_-}^* \To i_{Y_-}^*\mfR\mfF.\end{equation}
It is sufficient to show that the components of this natural transformation are isomorphisms on the generating set of vector bundles for $\cV_0$. 

Pick  $\delta \in \Gamma_{d,r-1}$. We know that $j^!j_*\pi^*\bbS^{\delta}H^\vee$ is given by the complex \eqref{equation.j^!j_*pi^*ofH}, so the unit of the $j_*$-$j^!$ adjunction is given by some map of bundles on $\mfZ$
$$\eta:  \bbS^{\delta} H^\vee \To \bbS^{\epsilon_K}S\br{d-r}\otimes \Wedge^{s_K} V.$$ Furthermore this map is constant over the $\Hom(S,V)$ directions, i.e. it is pulled-up from the stack $\mfS$.

If $\delta$ lies in $\Gamma_{d,r-1}^{(1)}$ then we have $s_K=d$ and $\bbS^{\delta}H^\vee = \bbS^{\epsilon_K} H\br{d-r}$, so by Lemma \ref{lemma.1dmappingspaces}(ii) the map $\eta$ must be a twist of the tautological map, up to a scalar. Therefore the adjoint map under the $\pi^*$-$\pi_*$ adjunction given by
$$\bbS^\delta H^\vee \To \mfR\mfF(\bbS^\delta H^\vee ) = \bbS^\delta H^\vee $$
is a scalar multiple of the identity. So on this subset of the generating bundles we have shown that \eqref{eqn.unitofadjunction} is an isomorphism even before restricting to $Y_-$.

 Now let $\delta\in \Gamma_{d,r-1}^{(2)}$. As we argued in the proof of Lemma \ref{lemma.cotwisttransferproperty2}, applying $\pi_*$ to the complex \eqref{equation.j^!j_*pi^*ofH} shows that  $\mfR\mfF(\bbS^\delta H^\vee)$ is given by a complex
\begin{equation*} \complex{ \begin{tikzcd}[column sep=16pt] \bbS^{\hat{\delta}_{K}} H^\vee\otimes \Wedge^{s_{K}} V \br{-1}   \rar \nextcell \ldots\rar \nextcell  \bbS^{\hat{\delta}_1} S \otimes \Wedge^{s_1} V \br{-1} \rar\nextcell   \underline{\bbS^{\hat{\delta}_0} H^\vee\br{-1}} \end{tikzcd} }. \end{equation*}
The diagrams $\hat{\delta}_k$ arose in the following way. Starting from $\delta_0=\delta$, we applied Algorithm \ref{algorithm} to get a sequence of diagrams $\delta_k$. These all have width $d-r+1$, and so using \eqref{equation.epsilon_defn} we have
$$\hat{\delta}_k =  \comp{d-r+1}{r-1}{ \comp{d-r+1}{r}{\delta_k} }.$$
Hence $\hat{\delta}_k$ is the diagram obtained from $\delta_k$ by deleting the first row. This means that if we apply Algorithm \ref{algorithm} to the starting diagram $\hat{\delta}_0$, and with the parameter $r$ replaced by $r-1$, then it produces the sequence of diagrams $\hat{\delta}_k$. 

Now recall that $\mfY$ is the analogue of $\mfX$ but with $r$ replaced by $r-1$. Therefore by Theorem \ref{theorem.freeresolutions} there is a complex of bundles on $\mfY$
\begin{equation*} \complex{ \begin{tikzcd}[column sep=22pt] \bbS^{\hat{\delta}_{K+1}} H^\vee\otimes \Wedge^{s_{K+1}} V \rar{\tilde{\eta}} \nextcell  \bbS^{\hat{\delta}_{K}} H^\vee\otimes \Wedge^{s_{K}} V   \rar \nextcell \ldots \rar \nextcell \bbS^{\hat{\delta}_1} H^\vee \otimes \Wedge^{s_1} V  \rar\nextcell  \underline{\bbS^{\hat{\delta}_0} H^\vee} \end{tikzcd}} \end{equation*}
which is a free resolution of a sheaf supported on the unstable locus that we remove when we form $Y_{-}$. Furthermore, since width$(\hat{\delta}_0)< d-r+2$, Remark \ref{remark.combinatorics}(iii) tells us that $s_{K+1} = d$, and
$$ \bbS^{\hat{\delta}_{K+1}} H^\vee \br{-1} = \bbS^{\delta} H^\vee $$ 
because removing the first column of $\hat{\delta}_{K+1}$ gives $\delta$ by Remark \ref{remark.combinatorics}(v) and $\hat{\delta}_{K+1}$ has height $r-1$ by Remark \ref{remark.combinatorics}(ii). So we have found a map on $\mfY$
$$\tilde{\eta}:  \bbS^\delta H^\vee \To \bbS^{\hat{\delta}_{K}} H^\vee\otimes \Wedge^{s_{K}} V \br{-1} $$
which induces a quasi-isomorphism
$$ i_{Y_-}^*  \bbS^\delta H^\vee \To i_{Y_-}^*\mfF\mfR( \bbS^\delta H^\vee).$$
We claim that $\tilde{\eta}$ is the adjoint to $\eta$ under the $\pi^*$-$\pi_*$ adjunction, at least up to a scalar factor. If we can show this claim then the proof of the lemma is complete, because then applying \eqref{eqn.unitofadjunction} to $\bbS^\delta H^\vee$ gives the above quasi-isomorphism.

 To show the claim, observe that the adjoint of $\tilde{\eta}$ is given by the composition
$${\renewcommand{\arraystretch}{1.4} \begin{array}{rcl}
\bbS^\delta H^\vee & \stackrel{\tilde{\eta}}{\To}  & \bbS^{\hat{\delta}_{K}} H^\vee\otimes \Wedge^{s_{K}} V \br{-1} \\
& =\!= & \bbS^{\epsilon_K} H\br{d-r}\otimes \Wedge^{s_{K}} V \\
& \stackrel{\tau}{\To} & \bbS^{\epsilon_K} S\br{d-r}\otimes \Wedge^{s_{K}} V\end{array}}$$
on the stack $\mfZ$, where $\tau$ is the tautological map. By construction, $\tau\tilde{\eta}$ is independent of the $\Hom(S,V)$ directions, so it is pulled-up from $\mfS$. Also, both $\eta$ and $\tau\tilde{\eta}$ must be $\operatorname{SL}(V)$-equivariant, because our entire construction is, so by Lemma \ref{lemma.1dmappingspaces3} they agree up to a scalar factor.
\end{proof}

\begin{proof}[Proof of Lemma \ref{lemma.cotwisttransferproperty3}]
The functor $C_F^{\dagger}[-\sigma]$ is a (shifted) cone on the natural transformation
$$ RF \To \id[-\sigma].$$
The arguments in the proof of Lemma \ref{lemma.twisttransferproperty3} show that there is a natural transformation
$$i_{Y_+}^* \mfR\mfF \To RF i_{Y_+}^*.$$
We claim this induces a natural isomorphism
$$ i_{Y_+}^*\mfR\mfF \To C_F^{\dagger}[-\sigma] i_{Y_+}^*$$
of functors from $\cV_0$ to $D^b(Y_+)$, i.e. for every object $\mathcal{E}\in \cV_0$ the two natural morphisms 
\begin{equation}\label{equation.exacttriangle} i_{Y_+}^* \mfR\mfF(\mathcal{E}) \To RF i_{Y_+}^* (\mathcal{E}) \To i_{Y_+}^*\mathcal{E}[-\sigma] \end{equation}
form (two-thirds of) an exact triangle. It is sufficient to prove this claim on the generating set of vector bundles.

Fix  $\bbS^\delta H^\vee \in \cV_0$. Arguing again as in  Lemma \ref{lemma.twisttransferproperty3}, there is a natural isomorphism
$$ i_{Z}^* j^!j_*\pi^* \bbS^\delta H^\vee  \iso  j^!j_*\pi^*  i_{Y_+}^* \bbS^\delta H^\vee.$$
Combining this with a component of the natural transformation from $i_{Y_+}^*\pi_* $ to $\pi_*i_Z^*$ gives us the natural morphism
\begin{equation}\label{eqn.curlyRFtoRF}i_{Y_+}^*\mfR\mfF(\bbS^\delta H^\vee) \To RFi_{Y_+}^*(\bbS^\delta H^\vee).\end{equation}
By Corollary \ref{corollary.j^!j_*}, $j^!j_*\pi^*\bbS^\delta H^\vee$ is a complex
\begin{equation*} \complex{ \begin{tikzcd}[column sep=18pt] \underline{\bbS^{\epsilon_{K}} S \br{d-r} \otimes \Wedge^{s_{K}} V} \rar \nextcell\ldots \rar \nextcell  \bbS^{\epsilon_1} S \br{d-r} \otimes \Wedge^{s_1} V \rar \nextcell   \bbS^{\epsilon_0} S\br{d-r} \end{tikzcd} }.  \end{equation*} We can understand the morphism \eqref{eqn.curlyRFtoRF} term-by-term in this complex, i.e. it is the aggregate of the natural maps
$$i_{Y_+}^*\pi_* \bbS^{\epsilon_k} S \To\pi_*i_Z^*  \bbS^{\epsilon_k} S $$
twisted by powers of $\cO\br{1}$ and exterior powers of $V$. These maps are constant in the $\Hom(V,H)$ directions in $Y_+$.

Now consider the natural transformation from $RF$ to $\id[-\sigma]$. It arises in the following way. For any object $\cE\in D^b(Y_+)$, the natural morphism
$$j^*j_*\pi^* \cE \To \pi^*\cE$$
induces a morphism
$$j^!j_*\pi^*\cE \To \pi^!\cE  [-\sigma]$$
because $\sigma = \dim  \pi - \dim j$ by definition and the relative canonical bundles $K_\pi$ and $K_j$ are equal, as $\mfX$ and $\mfY$ are Calabi-Yau. Then the $\pi_*$-$\pi^!$ adjunction gives the morphism
$$RF(\cE) = \pi_* j^! j_* \pi^* \cE \To \cE[-\sigma].$$
We apply this to the case $\cE = i_{Y_+}^* \bbS^\delta H^\vee$. We know that $j^*j_*\pi^*\bbS^\delta H^\vee$ is a complex
\begin{equation*}  \complex{ \begin{tikzcd}  \bbS^{\delta_{K}} S^\vee \otimes \Wedge^{s_{K}} V  \rar \nextcell\;\;   \ldots\;\;  \rar \nextcell\bbS^{\delta_1} S^\vee \otimes \Wedge^{s_1} V  \rar \nextcell  \underline{\bbS^{\delta} S^\vee} \end{tikzcd} }. \end{equation*}
Hence the natural map from $j^*j_*\pi^*\bbS^\delta H^\vee$ to $\pi^*\bbS^\delta H^\vee$ must be given by some non-zero map of bundles
$$ \bbS^\delta S^\vee \To\pi^* \bbS^\delta H^\vee.$$
This map must be independent of the $\Hom(S,V)$ directions in $Z$ because both $j$ and $\pi^*\bbS^\delta H^\vee$ are, i.e. it is the pull-up of a map from the stack $\mfS$. So by Lemma  \ref{lemma.1dmappingspaces}(ii) it must be the tautological map (up to a scalar multiple), and in particular it is also constant over the $\Hom(V,H)$ directions in $Z$. Consequently, the natural map 
$$j^!j_*\pi^*\bbS^\delta H^\vee \To \pi^!\bbS^\delta H^\vee [-\sigma] $$
is given by a map of bundles
$$\bbS^{\epsilon_0} S\br{d-r} \To \pi^!\bbS^\delta H^\vee [-\sigma] $$
where $\epsilon_0 = \comp{d-r+1}{r}{\delta}$ as before, and the natural map from $RFi_{Y_+}^*(\bbS^\delta H^\vee)$ to $i_{Y_+}^*\bbS^\delta H^\vee[-\sigma]$ is obtained by restricting this map to $Z$ and taking its adjoint. Note that
$$\bbS^\delta H^\vee = \bbS^{\tilde{\epsilon}_0} H \br{d-r+1} $$
where $\tilde{\epsilon}_0 = \comp{d-r+1}{r-1}{\delta}$ and so $\tilde{\epsilon}_0$ is $\epsilon_0$ with its first row removed.

Now we evaluate \eqref{equation.exacttriangle} on the object $\bbS^\delta H^\vee$ and verify that we obtain an exact triangle as required. Combining the above arguments, the result can be written as the twist by $\cO\br{d-r}$ of a diagram as follows:
$$\begin{tikzcd}    i_{Y_+}^*\pi_* \bbS^{\epsilon_K}S \otimes \Wedge^{s_{K}} V  \rar \dar&\ldots\rar &  i_{Y_+}^*\pi_* \bbS^{\epsilon_1}S \otimes \Wedge^{s_1} V  \rar \dar&  i_{Y_+}^*\pi_* \bbS^{\epsilon_0}S  \dar \\
    \pi_* i_Z^*\bbS^{\epsilon_K}S \otimes \Wedge^{s_{K}} V  \rar &\ldots\rar &  \pi_* i_Z^*\bbS^{\epsilon_1}S \otimes \Wedge^{s_1} V  \rar&  \pi_* i_Z^* \bbS^{\epsilon_0} S \dar \\
            &                                                                                                                                &     &     i_{Y_+}^*\bbS^{\tilde{\epsilon}_0} H\br{1}[-\sigma] 
\end{tikzcd}$$
All the vertical arrows are constant in the $\Hom(V,H)$ directions, so we can analyse them on the smaller space $Q$.  By Remark \ref{remark.combinatoricsdual}(ii*), the width of $\epsilon_0$ is $d-r+1$, and the width of $\epsilon_k$ is at most $d-r$ for $k>0$. Then by Proposition \ref{prop.pushdownsbypi}(ii) and (iii) (and the discussion following), the first $K$ columns of the above diagram are isomorphisms, and the final column gives an exact triangle. So  \eqref{equation.exacttriangle} yields an exact triangle on each object $\bbS^\delta H^\vee\in \cV_0$ as claimed.
\end{proof}

\appendix
\section{}
\label{appendix}

In this appendix we study the behaviour of tautological vector bundles as we push them around the two squares in the diagram \eqref{diagram.correspondences} as discussed in Section \ref{section.analysingcorrespondences}.

\begin{notn} As in the rest of the paper, we let  $V$, $S$ and $H$ be vector spaces of dimensions $d$, $r$ and $r-1$ respectively, under the assumption that $d \geq r > 0$. We also fix a trivialization of $\det V$.
\end{notn}

\subsection{The left-hand square}

Let $\mfQ$ be the affine stack
$$\mfQ = [\Hom(H,V) \; / \; \operatorname{GL}(H)]$$
and define a second stack
$$ \mfP \subset [\Hom(H, S)\oplus \Hom(S,V) \; / \; \operatorname{GL}(H)\times \operatorname{GL}(S)] $$
to be the locus where the map from $H$ to $S$ is an injection. Then we have a composition map $\pi: \mfP \to \mfQ$. 

Let $P\subset \mfP $ be the locus where both maps are injections, so $P$ is a partial flag variety. The image of 
$P$ under $\pi$ is the Grassmannian $Q \cong \Gr(r-1, V)$ which forms an open substack of $\mfQ$. So we have a commutative diagram as follows: 
$$\begin{tikzcd} \mfQ & \mfP \lar[swap]{\pi}\\ Q \uar[hook]{i_Q} & P \uar[hook]{i_P} \lar[swap]{\pi} \end{tikzcd}$$
If we choose a point $q\in Q$ then the fibre of $P$ over $q$ is the projective space $\P(V/H)$. The fibre of $\mfP$ over $q$ is slightly larger: it is given by the affine stack
\begin{equation}\label{equation.fibreofP} [\Hom(L, V/H ) \; / \; \operatorname{GL}(L)] \end{equation}
where $L$ is the one-dimensional space $S/H$. In particular the above diagram is not a fibre square. The fibres of $\mfP$ over points  $q\in \mfQ \backslash Q$ have the same description, although the dimension of $V/H$ will jump.

Observe also that the relative canonical bundle of $\pi$ is
$$ K_\pi = (\det S)^{d-r+1}\otimes (\det H)^{r-d} = L^{\otimes d-r+1}\otimes \det H$$
and the dimension of $\pi$ is $d-r$.

\begin{prop}\label{prop.pushdownsbypi}
Let $\gamma$ be a Young diagram, and let $\bbS^\gamma S$ be the associated vector bundle on $\mfP$. 
\begin{enumerate}
 \item We have an isomorphism of bundles on $\mfQ$
$$\bbS^\gamma H \iso \pi_* \bbS^\gamma S.$$
\item
 If the width of $\gamma$ is at most $d-r$ then we have an isomorphism of bundles on $Q$
$$\bbS^\gamma H =  i_Q^* \pi_* \bbS^\gamma S \iso \pi_* i_P^* \bbS^\gamma S,$$ i.e. we have base change for $\bbS^\gamma S$.
\item
Let $\gamma$ have width $d-r+1$, and let $\tilde{\gamma}$ be the Young diagram obtained by deleting the first row of $\gamma$. Then there is an exact triangle
$$  \begin{tikzcd}\bbS^\gamma H \rar &  \pi_* i_P^* \bbS^\gamma S \rar & \bbS^{\tilde{\gamma}} H\otimes \det H^\vee [r-d] \rar[dashed] & {} \end{tikzcd}$$
in $D^b(Q)$.

\end{enumerate}
\end{prop}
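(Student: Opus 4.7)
The strategy is to treat all three parts uniformly via the tautological short exact sequence
$$0 \to H \to S \to L \to 0$$
on $\mfP$, where $L := S/H$ is a line bundle (the injectivity of $H \to S$ guarantees that this cokernel has constant rank one). By the Pieri rule applied to the restriction $\operatorname{GL}(H) \times \operatorname{GL}(L) \subset \operatorname{GL}(S)$, this sequence induces a filtration of $\bbS^\gamma S$ whose associated graded pieces are
$$\bbS^{\gamma'} H \otimes L^{|\gamma|-|\gamma'|},$$
with $\gamma'$ running over subdiagrams of $\gamma$ such that $\gamma/\gamma'$ is a horizontal strip. Since each $\bbS^{\gamma'} H$ is pulled back from $\mfQ$, the projection formula reduces the computation of $R\pi_* \bbS^\gamma S$ to that of $R\pi_* L^n$ for $n \geq 0$, and the three parts differ only in which pieces survive.

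For (i), the fibre of $\pi$ over a point of $\mfQ$ is the affine stack $[\Hom(L, V/H)/\operatorname{GL}(L)]$ noted in the excerpt. Its structure sheaf, viewed as a $\operatorname{GL}(L) = \C^*$-representation, has only non-negative $L$-weights (from symmetric powers of the dual), so the invariant part of $L^n \otimes \cO$ vanishes for $n > 0$ and equals $\cO_\mfQ$ for $n = 0$; affineness of the fibres kills higher cohomology. Every piece of the filtration with $\gamma' \subsetneq \gamma$ is thus annihilated, leaving $\pi_* \bbS^\gamma S \cong \bbS^\gamma H$.

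For (ii), we restrict to the open $Q$, over which the fibre of $P \to Q$ is the projective space $\P(V/H) \cong \P^{d-r}$ with $L \cong \cO(-1)$. The vanishing $R\pi_* \cO_{\P^{d-r}}(-n) = 0$ for $0 < n \leq d-r$ kills every positive-$n$ contribution whose horizontal strip has size at most $d-r$. Since a horizontal strip in $\gamma$ has size at most $\text{width}(\gamma)$, the width hypothesis ensures exactly this, and only the $n = 0$ piece $\bbS^\gamma H$ survives.

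For (iii), the new feature when $\text{width}(\gamma) = d-r+1$ is that a horizontal strip of size $d-r+1$ is now admissible, and this maximum is attained uniquely by the top row of $\gamma$, forcing $\gamma' = \tilde\gamma$. From $K_\pi = L^{d-r+1} \otimes \det H$ (computed via the Euler sequence on $\P(V/H)$ and the trivialization of $\det V$) and relative Serre duality, one obtains $R\pi_* L^{d-r+1} \cong \det H^\vee [r-d]$. All other positive-$n$ contributions vanish as in (ii), and the two surviving pieces live in cohomological degrees $0$ and $d-r$, so the filtration spectral sequence degenerates for bidegree reasons, yielding the claimed exact triangle. The main technical point is constructing the filtration of $\bbS^\gamma S$ compatibly with the short exact sequence, together with careful bookkeeping of line-bundle twists in the Serre duality step; everything else — projective-space vanishing, the projection formula, and affineness of the stacky fibres — is standard.
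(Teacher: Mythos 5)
Your proof is correct and follows essentially the same route as the paper's: filter $\bbS^\gamma S$ via $0\to H\to S\to L\to 0$, identify the graded pieces by the Pieri rule, reduce by the projection formula to computing $R\pi_*L^{\otimes t}$ on the stacky/projective fibres, and read off the three cases. The only small imprecision is the claim that the filtration spectral sequence ``degenerates for bidegree reasons'' --- when $d=r$ the two surviving pieces sit in the same cohomological degree so that argument doesn't apply (and the paper's remark notes one may get a non-split extension there) --- but the stated exact triangle follows anyway, directly from the fact that only two graded pieces have nonzero pushforward, with no degeneration argument needed.
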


\begin{rem}If $d>r$ then \textit{(iii)} is the statement that the non-zero higher push-down sheaves of $i_P^*\bbS^\gamma S$ are
$$\RDerived^k\pi_* i_P^*\bbS^\gamma S  =\left\{ \begin{array}{cl} \bbS^\gamma H \hspace{1cm} & k=0, \\
 \bbS^{\tilde{\gamma}} H\otimes \det H^\vee \hspace{1cm}&  k=d-r. \end{array}\right.$$
However if $d=r$ then we may get a non-split extension of bundles on $Q$.
\end{rem}

\begin{proof}
 On $\mfP$ we have a short exact sequence of bundles
$$ 0 \To H \To S \To L \To 0.$$
Thus $\bbS^\gamma S$ has a filtration whose associated graded pieces are
\begin{equation}\label{equation.schurfiltration}\bigoplus_{\alpha}\left( \bbS^\alpha H \otimes  L^{\otimes t} \right)^{\oplus c_{\alpha, \tau}^{\gamma}} \end{equation}
where $\tau$ is a partition of width $t$ and height 1, and $c_{\alpha, \tau}^{\gamma}$ are the Littlewood-Richardson coefficients \cite{Fulton:1996tk}. This means we can compute $\pi_*\bbS^\gamma S$ and $\pi_* i_P^* \bbS^\gamma S$ by spectral sequences that start with the push-downs of this graded bundle. Thus what we need to calculate is $\pi_* (L^{\otimes t})$ and $\pi_* i_P^*( L^{\otimes t}) $ for $t\geq0$.

Fix a point  $q\in\mfQ$, the fibre $\mfP_q$ over this point is the affine stack \eqref{equation.fibreofP}. The restriction of $L$ to $\mfP_q$ is the negative tautological line bundle, so any positive powers of it have no (derived) global sections. Since this is true at all fibres it implies that $\pi_* (L^{\otimes t}) = 0$ for $t>0$. Similarly $\pi_* (\cO) = \cO$. The fibres of $P$ on the other hand are projective spaces, so we still have  $\pi_* i_P^*\cO = \cO$ and $\pi_* i_P^*(L^{\otimes t}) = 0$ for $0<t<d-r+1$, but when $t\geq d-r+1$ we have a single higher push-down sheaf. In particular we have $$\pi_*i_P^*(L^{\otimes d-r+1}) = \det H^\vee [r-d].$$ 

\begin{enumerate}
\item Apply $\pi_*$ to \eqref{equation.schurfiltration}. Only the degree-zero piece $\bbS^\gamma H$ survives, and the spectral sequence collapses.

\item By the width restriction on $\gamma$, no powers of $L$ above $L^{\otimes d-r}$ occur in \eqref{equation.schurfiltration}, so when we apply $\pi_*i_P^*$
again only the piece $\bbS^\gamma H$ survives.

\item By the Littlewood-Richardson rule (or the simpler Pieri rule \cite{Fulton:1996tk}), the degree $d-r+1$ piece of \eqref{equation.schurfiltration} is $\bbS^{\tilde{\gamma}} H \otimes L^{\otimes d-r+1}$, and there are no pieces of higher degree. So when we apply $\pi_*i_P^*$ we get two surviving terms, the spectral sequence collapses, and $\pi_* i_P^* \bbS^\gamma S$ is an extension as claimed.
\end{enumerate}\end{proof}

We now say a little more about the second map that occurs in the exact triangle in (iii). This is used in the proof of Lemma \ref{lemma.cotwisttransferproperty3}. Let $\gamma$ have width $d-r+1$. We observed that the filtration \eqref{equation.schurfiltration} concludes with a natural map
 $$q: \bbS^\gamma S \To \bbS^{\tilde{\gamma}} H \otimes L^{\otimes d-r+1} $$
where as before $\tilde{\gamma}$ is $\gamma$ with its first row removed. The nature of this map is clearer if we switch to Schur powers of the dual bundles. Let \begin{eqnarray*} \delta &=& \comp{d-r+1}{r}{\gamma} \\ &=& \comp{d-r+1}{r-1}{\tilde{\gamma}}.\end{eqnarray*} Then 
$$\bbS^\gamma S  = \bbS^\delta S^\vee \otimes (\det S)^{r-d-1} $$
and 
\begin{eqnarray*} \bbS^{\tilde{\gamma}} H \otimes L^{\otimes d-r+1} &=& \bbS^\delta H^\vee\otimes (\det H)^{r-d-1} \otimes L^{\otimes d-r+1} \\ & =& \bbS^\delta H^\vee\otimes (\det S)^{r-d-1}.\end{eqnarray*}
The map $q$ is the tautological map from $\bbS^\delta S^\vee$ to $\bbS^\delta H^\vee$, twisted by a line bundle. We also have that
$$ \bbS^{\tilde{\gamma}} H \otimes L^{\otimes d-r+1} = \pi^!\left(   \bbS^{\tilde{\gamma}} H \otimes \det H^\vee [r-d]\right).$$
Now restrict to the space $P$, and use the $\pi_*$-$\pi^!$ adjunction. The map $q$ gets sent to the map
$$ \pi_* i_P^*\bbS^\gamma S \To \bbS^{\tilde{\gamma}} H\otimes \det H^\vee[r-d] $$
that occurs in the statement of (iii).

\subsection{The right-hand square}
\label{section.righthand_square}
 We consider the affine stack
$$\mfT = [\Hom(V, S) \; / \; \operatorname{GL}(S) ]. $$
We also consider a second stack
$$ \bar{\mfS} =[\Hom(V, H)\oplus \Hom(H,S) \; / \; \operatorname{GL}(H)\times \operatorname{GL}(S)],$$
and let $\mfS\subset \bar{\mfS}$ be the open substack where the map from $H$ to $S$ is an injection. We let $j$ be the map
$$j: \mfS \To \mfT $$
given by composing the two factors and forgetting the $\operatorname{GL}(H)$ action. 

As in the body of the paper, we write $\cO(1):=\det S^\vee$, and $\cO\br{1}:= \det H^\vee$. Then
\begin{equation}\label{equation.j^!2} j^!(-) = j^*(-)\otimes K_j[\dim j] = j^*(-)(r-d-1)\br{d-r}[r-d-1] \end{equation}
(recalling that $\det V$ is trivialized), which of course agrees with \eqref{equation.j^!}.

 The image of $j$ is the degenerate locus in $\mfT$ where the rank of the linear map has dropped. More specifically, if we fix a point $t\in \mfT$ then we have a vector space $C$ defined as the cokernel
$$V \To S \To C \To 0.$$
Generically this will be zero-dimensional, and it will jump up in dimension for non-generic $t$. The fibre of $\mfS$ over $t$ is the projective space $\mfS_t=\P^\vee C$ of hyperplanes in $C$.

\begin{lem}\label{lemma.nohigherj_*}
Let $\delta$ be any Young diagram. Then $j_*\bbS^\delta H^\vee$ is just a sheaf, i.e. there are no higher push-down sheaves. 
\end{lem}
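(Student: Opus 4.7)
The plan is to factor $j$ through a Grassmannian bundle and then apply Borel--Weil--Bott to a Koszul resolution.

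First I would factor $j$ as $\mfS \hookrightarrow \widetilde{\mfT} \xrightarrow{\pi} \mfT$, where $\widetilde{\mfT} = [\Hom(V,S) \times \Gr(r-1, S) \,/\, \operatorname{GL}(S)]$ is the relative Grassmannian bundle parametrising pairs $(t, H)$ consisting of a linear map $t \colon V \to S$ and an $(r-1)$-dimensional subspace $H \subset S$. The closed substack $\mfS \subset \widetilde{\mfT}$ is the locus where $\Im(t) \subset H$, which is the zero locus of the regular section $V \otimes \cO \to L$ of the rank-$d$ bundle $V^\vee \otimes L$, with $L = S/H$ the tautological rank-$1$ quotient. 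This yields a Koszul resolution on $\widetilde{\mfT}$:
\[ 0 \to \Wedge^d V \otimes L^{\vee\otimes d} \otimes \bbS^\delta H^\vee \to \ldots \to V \otimes L^\vee \otimes \bbS^\delta H^\vee \to \bbS^\delta H^\vee \to \iota_*\bbS^\delta H^\vee \to 0, \]
where $\iota \colon \mfS \hookrightarrow \widetilde{\mfT}$ denotes the embedding.

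Next I would compute $Rj_* \bbS^\delta H^\vee = R\pi_*(\iota_*\bbS^\delta H^\vee)$ via the hypercohomology spectral sequence $E_1^{p,q} = R^q \pi_* K^p \Rightarrow R^{p+q} j_* \bbS^\delta H^\vee$, where $K^{-k} = \Wedge^k V \otimes L^{\vee\otimes k}\otimes \bbS^\delta H^\vee$ sits in cohomological degree $-k$ for $0 \leq k \leq d$. Since $\pi$ is a relative $\P^{r-1}$-bundle, Borel--Weil--Bott determines each $R^q\pi_*(L^{\vee\otimes k}\otimes \bbS^\delta H^\vee)$: it is concentrated in a single degree $\ell_k$, equal to the number of transpositions needed to make the $\rho$-shifted $\operatorname{GL}(S)$-weight $(\delta_1 + r - 1, \delta_2 + r - 2, \ldots, \delta_{r-1} + 1, k)$ strictly decreasing, and is zero if any two of these entries coincide.

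The crux is the combinatorial observation that $\ell_k < k$ whenever $\ell_k \geq 1$. Indeed, if $\ell_k = \ell$ then $k$ must exceed at least $\ell$ of the entries $\{\delta_j + r - j \colon 1 \leq j \leq r-1\}$, the $\ell$-th smallest of which is $\delta_{r-\ell} + \ell \geq \ell$, forcing $k > \ell$. Combined with the fact that $\ell_k = 0$ whenever $k \leq \delta_{r-1}$, this implies that every nonzero term $E_1^{-k, \ell_k}$ satisfies $-k + \ell_k \leq 0$, with equality only when $k = \ell_k = 0$. Hence $R^n j_* \bbS^\delta H^\vee = 0$ for all $n > 0$, proving the lemma.

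The main obstacle is the careful bookkeeping of the Borel--Weil--Bott weights for the relative Grassmannian $\pi$ and the verification that the Koszul section is regular of the expected rank; once these are in place, the inequality $\ell_k < k$ follows from a short combinatorial argument and the vanishing is immediate.
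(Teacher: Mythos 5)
Your argument is correct, and it takes a genuinely different route from the paper's. The paper proves the lemma directly fibrewise over $j$: for each $t \in \Hom(V,S)$ the fibre $\mfS_t$ is the projective space $\P^\vee C$ (where $C = \operatorname{coker}(t)$, of varying dimension), the restriction of $H$ to this fibre splits as $\tilde H \oplus \cO^{\oplus \operatorname{rk}(t)}$ with $\tilde H$ the tautological subbundle, and hence $\bbS^\delta H^\vee|_{\mfS_t}$ is a direct sum of Schur powers of $\tilde H^\vee \oplus \cO$ which have no higher cohomology. You instead factor $j = \pi \circ \iota$ through the fixed-rank projective bundle $\widetilde{\mfT} = \P^\vee\cS \to \mfT$, resolve $\cO_{\mfS}$ by the Koszul complex of the regular section of $V^\vee \otimes L$, and bound the position of the nonzero Borel--Weil--Bott terms via the inequality $\ell_k < k$. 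This is a sound and self-contained spectral-sequence argument (and the degree count $\ell_k < k$ is verified correctly), though it is considerably heavier than the paper's two-line fibre calculation. What your route buys is that it essentially reproduces, in a single calculation, the content of the paper's Theorem \ref{theorem.freeresolutions} and Lemma \ref{lem.staircase}: the terms $E_1^{-k,\ell_k}$ you are bounding are exactly Weyman's $\mcF_\bullet$, so your proof would yield the explicit resolution as a by-product. The trade-off is that the paper keeps Lemma \ref{lemma.nohigherj_*} deliberately lightweight because it is used as a standing hypothesis when citing Weyman's Theorem 5.4.1 as a black box; your approach dispenses with that black box at the cost of redoing the Borel--Weil--Bott bookkeeping (and having to verify regularity of the Koszul section, which you correctly flag as the residual obstacle — it follows from a dimension count since both $\mfS$ and $\widetilde{\mfT}$ are smooth stacks and $\operatorname{codim}(\mfS \subset \widetilde{\mfT}) = d = \operatorname{rk}(V^\vee\otimes L)$).
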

\begin{proof}
Pick $t\in \Hom(V,S)$. The restriction of $H$ to the fibre $\mfS_t$ is isomorphic to
$$\tilde{H} \oplus \cO^{\oplus rk(t)} $$
where $\tilde{H}$ is the tautological subbundle on $\P^\vee C$. Thus the restriction of $\bbS^\delta H^\vee$ to $\mfS_t$ is a non-negative bundle, and has no higher cohomology. Since this is true at all fibres, the higher push-down sheaves vanish.
\end{proof}

We will construct a locally-free resolution of the torsion sheaf $j_*\bbS^\delta H^\vee$, for certain $\delta$. In order to describe this resolution, we first need to introduce some combinatorics with Young diagrams.

\begin{alg}\label{algorithm}
Let $\delta$ be a Young diagram of height $<r$. We define a sequence of Young diagrams $\delta_1, \delta_2,\ldots$ 
starting from $\delta_0:=\delta$, by the following procedure:
\begin{itemize}
\item $\delta_1$ is obtained from $\delta_0$ by adding boxes to the first column until it reaches height $r$.
\item $\delta_k$ is obtained from $\delta_{k-1}$ by adding boxes to the $k^{\text{th}}$ column, until its height is one more than the height of the $(k-1)^{\text{th}}$ column of $\delta_0$. 
\end{itemize}
We let
$s_k$ denote the total number of boxes added at stage $k$, i.e. $s_k$ is the difference in size between $\delta_k$ and $\delta_0$.
\end{alg}

\begin{rem}In Algorithm \ref{algorithm}, the last box added at stage $k$ is immediately to the right of the first box added at stage $k-1$.
\end{rem}

\begin{lem}\label{lemma.staircase_Young_diag_components}Writing $h_k$ for the height of the $k^{\text{th}}$ column of a Young diagram $\delta$ of height $<r$ given by $\delta = (\cpt{\delta}{1}, \ldots, \cpt{\delta}{r-1})$ we have \[\staircase{k} = (\cpt{\delta}{1}, \ldots, \cpt{\delta}{h_k}, k, \cpt{\delta}{h_k+1}+1, \ldots, \cpt{\delta}{r-1}+1).\]

\begin{proof} Induction. \end{proof}
\end{lem}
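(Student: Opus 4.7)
My plan is to prove the formula by induction on $k \geq 1$. Induction is natural because Algorithm \ref{algorithm} produces $\staircase{k}$ from $\staircase{k-1}$ by a localized modification (adding boxes to a single column), so the formula for $\staircase{k}$ should follow from that for $\staircase{k-1}$ by a careful combinatorial analysis of stage $k$.

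For the base case $k = 1$, the algorithm extends column $1$ of $\delta = \staircase{0}$ to height $r$. Since $\delta$ has height $<r$, this appends $r - h_1$ new rows of length $1$ below the existing diagram. Using $\cpt{\delta}{i} = 0$ (and hence $\cpt{\delta}{i} + 1 = 1$) for $i > h_1$, the resulting row sequence $(\cpt{\delta}{1}, \ldots, \cpt{\delta}{h_1}, 1, 1, \ldots, 1)$ agrees with the claimed formula.

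For the inductive step, I would begin by computing the height of column $k$ in $\staircase{k-1}$ directly from the inductive description: reading off row lengths, column $k$ has boxes exactly in rows $1, \ldots, h_k$, so its height is $h_k$ (i.e.\ stages $1, \ldots, k-1$ have left column $k$ untouched). Stage $k$ then extends this column to height $h_{k-1} + 1$, which amounts to adding a box in each of the rows $h_k + 1, h_k + 2, \ldots, h_{k-1} + 1$ of $\staircase{k-1}$. The crucial combinatorial identity is that in $\staircase{k-1}$ each of these rows has length exactly $k - 1$: for rows $h_k + 1, \ldots, h_{k-1}$ this is because they are precisely the rows of $\delta$ of length $\geq k - 1$ but $< k$, and for row $h_{k-1} + 1$ it is immediate from the inductive formula. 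Adding a box to each therefore produces rows of length $k$.

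The main obstacle is an index-shift bookkeeping: the $h_{k-1} - h_k + 1$ new length-$k$ rows of $\staircase{k}$ are encoded in the target formula as a single length-$k$ row at position $h_k + 1$ together with entries $\cpt{\delta}{h_k + 1} + 1, \ldots, \cpt{\delta}{h_{k-1}} + 1$ at positions $h_k + 2, \ldots, h_{k-1} + 1$, each of which simplifies to $k$ by the identity above. The remaining rows $h_{k-1} + 2, \ldots, r$ of $\staircase{k}$ are unchanged from $\staircase{k-1}$, and their descriptions in the two formulae coincide. A final sanity check that the row sequence is non-increasing also reduces to the same identity, since $\cpt{\delta}{h_k} \geq k$ and $\cpt{\delta}{h_k + 1} + 1 = k$.
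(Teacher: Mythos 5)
Your inductive argument is correct and supplies the details behind the paper's one-word proof (\emph{Induction}). One minor imprecision in the final sanity check: when $h_k = h_{k-1}$ the quantity $\delta^{h_k+1}+1$ is strictly less than $k$ rather than equal to it, but only $\delta^{h_k+1}+1 \leq k$ is needed for the row sequence to be non-increasing, so the argument still goes through.
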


Now we can state the following theorem, whose proof is given in Section \ref{section.requiredresolutions}:
\begin{thm}\label{theorem.freeresolutions}
Let $\delta$ be a Young diagram of height $<r$ and width $\leq d-r +1$. We have an exact sequence of sheaves on $\mfT$
\begin{equation*} \begin{tikzcd}[column sep=14pt] 0 \rar &  \bbS^{\delta_{K}} S^\vee \otimes \Wedge^{s_{K}} V  \rar &\;   \ldots\;  \rar &\bbS^{\delta_1} S^\vee \otimes \Wedge^{s_1} V  \rar &   \bbS^{\delta_0} S^\vee \rar & j_*\bbS^\delta H^\vee \rar & 0 \end{tikzcd} \end{equation*} 
where $K=d-r+1$, and the $\delta_k$ and $s_k$ are defined in Algorithm \ref{algorithm}.
\end{thm}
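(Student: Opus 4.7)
The plan is to apply Weyman's \emphasis{geometric technique} \cite{Weyman:2007wu}: we realise $j_*\bbS^\delta H^\vee$ as the derived pushforward of the Koszul complex of a regular section on an auxiliary Grassmann bundle, then identify the explicit resolution terms via Borel-Weil-Bott.

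First, introduce the Grassmann bundle $p \colon \widetilde{\mfT} \to \mfT$ of $(r-1)$-planes in the tautological $S$, realised as the stack
$$\widetilde{\mfT} := [\Hom(V, S) \oplus \Hom(H, S)^\circ \; / \; \operatorname{GL}(S) \times \operatorname{GL}(H)],$$
with tautological subbundle $H$ and quotient line bundle $L := S/H$; here $\Hom(H, S)^\circ$ denotes the locus of injective maps. Then $\mfS$ embeds as $\iota \colon \mfS \hookrightarrow \widetilde{\mfT}$, the vanishing locus of the canonical section of $V^\vee \otimes L$ induced by $V \to S \twoheadrightarrow L$. A dimension count shows $\mfS$ has codimension $d = \operatorname{rk}(V^\vee \otimes L)$ in $\widetilde{\mfT}$, so the section is regular, yielding the Koszul resolution
$$0 \to \Wedge^d V \otimes L^{-d} \to \cdots \to V \otimes L^{-1} \to \cO \to \iota_* \cO_\mfS \to 0.$$
Tensoring with $\bbS^\delta H^\vee$ and applying $Rp_*$, combined with $Rj_* = j_*$ on $\bbS^\delta H^\vee$ from Lemma \ref{lemma.nohigherj_*}, expresses $j_*\bbS^\delta H^\vee$ as the total derived functor of a complex on $\mfT$ with terms $\Wedge^k V \otimes Rp_*(L^{-k} \otimes \bbS^\delta H^\vee)$ for $k = 0, \ldots, d$.

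The heart of the proof is to compute $Rp_*(L^{-k} \otimes \bbS^\delta H^\vee)$ fibrewise via Borel-Weil-Bott on the fibres $\Gr(r-1, r) = \P^{r-1}$. The relevant Levi weight is $(-k, -\delta_{r-1}, \ldots, -\delta_1)$, so adding the Weyl vector $\rho = (r-1, r-2, \ldots, 0)$ produces a sequence whose last $r-1$ entries are automatically strictly decreasing (as $\delta$ is a partition): only the first entry $r - 1 - k$ may be out of place. A direct combinatorial analysis of the Weyl-group sorting required to make $\lambda + \rho$ strictly decreasing shows that $\lambda + \rho$ is regular (has no repeated entry) precisely when $k \in \{s_0, s_1, \ldots, s_K\}$, and that for $k = s_l$ the number of transpositions needed is exactly $l$, yielding the dominant weight of $\bbS^{\delta_l} S^\vee$. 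Hence $Rp_*(L^{-s_l} \otimes \bbS^\delta H^\vee) = \bbS^{\delta_l} S^\vee$ concentrated in cohomological degree $l$, and vanishes for other $k \in \{0, \ldots, d\}$.

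Finally, Weyman's procedure \cite{Weyman:2007wu} assembles the surviving pushforwards into an explicit complex on $\mfT$ whose $l$-th term is $\bbS^{\delta_l} S^\vee \otimes \Wedge^{s_l} V$ at position $-l$, with differentials induced canonically from the Koszul differentials and the Bott isomorphisms. The strict monotonicity $s_0 < s_1 < \cdots < s_K$ (immediate from $s_l = r + l - 1 - h_l^0$ and non-increasing column heights of $\delta$) ensures the length of the complex is exactly $K+1$, and convergence of the underlying spectral sequence to $j_*\bbS^\delta H^\vee$ in degree $0$ forces this complex to be exact apart from at the last term, giving the claimed resolution. The main obstacle is the combinatorial Borel-Weil-Bott analysis in the third paragraph: checking that the set of $k$ producing regular $\lambda + \rho$ is exactly $\{s_l\}_{l=0}^K$ and that the Weyl-sorting output reproduces $\bbS^{\delta_l} S^\vee$ in degree $l$, matching the diagrams and shifts generated by Algorithm \ref{algorithm}.
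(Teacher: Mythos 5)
Your overall strategy is the same as the paper's: both appeal to Weyman's geometric technique, realising $j_*\bbS^\delta H^\vee$ as the pushforward of a twisted Koszul complex over a Grassmann bundle and evaluating the terms by Borel--Weil--Bott. (The paper invokes \cite[Theorem 5.4.1]{Weyman:2007wu} directly with $G=\operatorname{GL}(S)$, $P$ the parabolic preserving $H$, $T = \Hom(V,S)$, $U = \Hom(V,H)$; your $\widetilde{\mfT}$ is the same object written in stacky language, and your codimension/regular-section argument for the Koszul complex is sound.)

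However, there is a genuine error in the Borel--Weil--Bott step. You claim that for $k = s_l$ the Weyl-group sorting requires exactly $l$ transpositions, so that
$Rp_*(L^{-s_l}\otimes \bbS^\delta H^\vee) = \bbS^{\delta_l}S^\vee$ sits in cohomological degree $l$. The correct degree is $s_l - l$, not $l$. To see this is not a typo, test it against Example~\ref{eg.eagon-northcott} ($r=2$, $d=4$, $\delta=\emptyset$): there $s_0,\dots,s_3 = 0,2,3,4$, the fibre is $\P^1$, and $Rp_* L^{-s_l}$ is concentrated in degree $0,1,1,1$ respectively (never more than $1$, since the Weyl group is $S_2$) --- i.e.\ degree $s_l-l$, not $l$. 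Moreover your own final sentence, placing $\bbS^{\delta_l}S^\vee\otimes\Wedge^{s_l}V$ at total position $-l$, is only consistent with the correct degree: the Koszul term sits at position $-s_l$, and $-s_l + (s_l-l) = -l$, whereas with your claimed degree $l$ one gets $-s_l+l$, which is wrong in general. So the conclusion is right but the intermediate claim contradicts it. (The source of the confusion is probably the paper's notation in Lemma~\ref{lem.staircase}: there the symbol $l$ denotes the Weyl length of the cycle $(r-l\:\ldots\: r)$, and the resolution index is $k = i - l$; these two quantities coincide only accidentally.)

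Finally, the closing sentence --- that convergence of the hypercohomology spectral sequence to a sheaf concentrated in degree $0$ ``forces this complex to be exact'' --- is glossing over the degeneration argument. The $E_1$-page entries sit one per antidiagonal, and the abutment being concentrated in degree $0$ says nothing by itself about whether these entries assemble into a genuine complex with exactness. Weyman's Theorem 5.4.1 handles this via the graded/equivariant minimality argument of \cite[Section 5.2]{Weyman:2007wu}; if you are going to cite Weyman's procedure you should note that this is where the exactness actually comes from, rather than attributing it to a spectral-sequence convergence principle.
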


We give some simple examples showing how Theorem \ref{theorem.freeresolutions} reproduces certain exact sequences used in Section \ref{section.intro_grassmannian}.

\begin{eg}\label{eg.eagon-northcott} Set $r=2$, $d=4$, and let $\delta$ be the empty partition. The partitions $\delta_k$ and associated resolution are shown in Figure \ref{figure.eagon-northcott}. Restricting the resolution to the full rank locus of $\mcT$, we obtain a long exact sequence which is the Eagon-Northcott complex used in Section \ref{section.intro_grassmannian}.
\end{eg}
\begin{figure}[H]
\begin{center}
\begin{tikzpicture}[auto,>=latex',scale=0.6]
\YoungDiag{en_delta_0}{0}{0}{3}{2}{{0}}{$\delta=\delta_0$}
\YoungDiag{en_delta_1}{5}{0}{3}{2}{{1,1}}{$\delta_1$}
\YoungDiag{en_delta_2}{10}{0}{3}{2}{{2,1}}{$\delta_2$}
\YoungDiag{en_delta_3}{15}{0}{3}{2}{{3,1}}{$\delta_3$}

\draw[dashed, line width=0.8pt] (0,2) -- (1,2) -- (1,1) -- (3,1) -- (3,0);
\end{tikzpicture}
\[\begin{tikzcd}[column sep=13pt] 0 \rar & \Sym^2 S^\vee(1) \otimes \Wedge^4 V \rar & S^\vee(1) \otimes \Wedge^3 V \rar & \cO(1) \otimes \Wedge^2 V \rar & \cO \rar & j_*\cO \rar & 0 \end{tikzcd}\]
\end{center}
\caption{}
\label{figure.eagon-northcott}
\end{figure}

\begin{eg}\label{eg.buchsbaum-rim} Set $r=2$, $d=4$ again, and let $\delta=(1,0)$. The partitions and resolution are shown in Figure \ref{figure.buchsbaum-rim}. Restricting  to the full rank locus of $\mcT$, we obtain the Buchsbaum-Rim complex used in Section~\ref{section.intro_grassmannian}.
\end{eg}
\begin{figure}[H]
\begin{center}
\begin{tikzpicture}[auto,>=latex',scale=0.6]
\YoungDiag{br_delta_0}{0}{0}{3}{2}{{1}}{$\delta=\delta_0$}
\YoungDiag{br_delta_1}{5}{0}{3}{2}{{1,1}}{$\delta_1$}
\YoungDiag{br_delta_2}{10}{0}{3}{2}{{2,2}}{$\delta_2$}
\YoungDiag{br_delta_3}{15}{0}{3}{2}{{3,2}}{$\delta_3$}

\draw[dashed, line width=0.8pt] (0,2) -- (2,2) -- (2,1) -- (3,1) -- (3,0);
\end{tikzpicture}
\[ \begin{tikzcd}[column sep=17pt] 0 \rar & S^\vee(2) \otimes \Wedge^4 V \rar & \cO(2) \otimes \Wedge^3 V \rar & \cO(1) \otimes V \rar & S^\vee \rar & j_* H^\vee \rar & 0 \end{tikzcd} \]
\end{center}
\caption{}
\label{figure.buchsbaum-rim}
\end{figure}

We make a few elementary observations on the combinatorics: these all follow from the size restrictions on $\delta$.

\begin{rem}\phantomsection\label{remark.combinatorics}

\begin{enumerate}
\item We have $s_{K} \leq d$ but $s_{K+1} > d$, which explains (at least formally) why the resolution terminates at $K$ terms. 
\item The height of $\delta_k$ is $r$ for $k>0$, and $<r$ for $k=0$. The width of $\delta_k$ is $\leq d-r+1$  for all $k\leq K$. 
\end{enumerate}
\smallskip \noindent Additionally, if the width of $\delta$ is $<d-r+1$ then:
\begin{enumerate}\addtocounter{enumi}{2}
\item $s_K=d$.
\item The width of $\delta_k$ is $<d-r+1$ for $k<K$, and the width of $\delta_K$ is $d-r+1$.
\item If we delete the first row and the first column from $\delta_K$ then we recover the diagram $\delta$.
\end{enumerate}
\smallskip \noindent On the other hand if the width of $\delta$ is equal to $d-r+1$ then:
\begin{enumerate}\addtocounter{enumi}{5}
\item The width of $\delta_k$ is $d-r+1$ for all $k\leq K$.
\end{enumerate}
\end{rem}

Remark (ii) implies that we can define
\begin{equation}\label{equation.epsilon_defn}\epsilon_k = \comp{d-r+1}{r}{\delta_k}\end{equation}
for $0\leq k \leq K$. Then the following corollary is immediate using \eqref{equation.j^!2}.
\begin{cor}\label{corollary.j^!j_*}
For $\delta$ as before in Theorem \ref{theorem.freeresolutions}, $j^!j_* \bbS^\delta H^\vee$ is quasi-isomorphic to the complex
\begin{equation*} \complex{ \begin{tikzcd}[column sep=14pt] \underline{\bbS^{\epsilon_{K}} S \br{d-r} \otimes \Wedge^{s_{K}} V} \rar \nextcell\;   \ldots\;  \rar \nextcell  \bbS^{\epsilon_1} S \br{d-r} \otimes \Wedge^{s_1} V \rar \nextcell   \bbS^{\epsilon_0} S\br{d-r} \end{tikzcd} }. \end{equation*}
\end{cor}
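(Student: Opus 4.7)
The plan is to treat this as a direct book-keeping exercise: we simply take the resolution from Theorem \ref{theorem.freeresolutions}, apply the explicit formula for $j^!$ given in \eqref{equation.j^!2}, and then translate the Schur powers of $S^\vee$ into Schur powers of $S$ using the duality formula from the remark following the notation section.

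In detail, Theorem \ref{theorem.freeresolutions} yields a quasi-isomorphism in $D^b(\mfT)$
$$j_*\bbS^\delta H^\vee \;\iso\; \complex{\bbS^{\delta_K}S^\vee \otimes \Wedge^{s_K}V \to \cdots \to \bbS^{\delta_1}S^\vee \otimes \Wedge^{s_1}V \to \underline{\bbS^{\delta_0}S^\vee}}$$
of $j_*\bbS^\delta H^\vee$ with a complex of locally free sheaves situated in degrees $-K,\dots,0$. Since these terms are locally free, we can compute $Lj^*$ termwise, and so $j^!j_*\bbS^\delta H^\vee$ is represented by this same complex restricted to $\mfS$, twisted by $\cO(r-d-1)\br{d-r}$ and shifted by $[r-d-1]$, as in \eqref{equation.j^!2}. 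Noting that $r-d-1=-K$, the homological shift places the leftmost term (which was in degree $-K$) into degree $0$ and the rightmost term (which was in degree $0$) into degree $K$, matching the orientation of the complex in the statement of the corollary.

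It remains to convert $\bbS^{\delta_k}S^\vee \otimes \cO(r-d-1)$ into $\bbS^{\epsilon_k}S$. The twist by $\cO(r-d-1)=(\det S)^{d-r+1}$, together with the identity $\bbS^\gamma V^\vee \otimes (\det V)^{\otimes w}=\bbS^{\comp{w}{\dim V}{\gamma}}V$ applied with $V=S$ of dimension $r$ and $w=d-r+1$, gives
$$\bbS^{\delta_k}S^\vee \otimes (\det S)^{d-r+1} \;=\; \bbS^{\comp{d-r+1}{r}{\delta_k}}S \;=\; \bbS^{\epsilon_k}S,$$
precisely the definition \eqref{equation.epsilon_defn} of $\epsilon_k$. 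This is legal because Remark \ref{remark.combinatorics}(ii) ensures $\delta_k$ fits inside a $(d-r+1)\times r$ rectangle for all $k\leq K$. Combining this with the untouched twist by $\br{d-r}=(\det H^\vee)^{d-r}$ and the exterior powers of $V$ yields exactly the complex displayed in the corollary. There is no real obstacle here; the only subtlety is keeping track of the direction of the homological shift and verifying that the Schur duality formula applies to every $\delta_k$ in the resolution, both of which are handled by the combinatorial remarks already established.
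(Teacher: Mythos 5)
Your proposal is correct and follows exactly the same route the paper intends: the paper simply asserts the corollary is "immediate using \eqref{equation.j^!2}," and your write-up supplies the routine bookkeeping (termwise application of $j^!$ on the locally free resolution, the Schur duality identity to pass from $\bbS^{\delta_k}S^\vee \otimes (\det S)^{d-r+1}$ to $\bbS^{\epsilon_k}S$, and the $[r-d-1]=[-K]$ degree shift) that the paper leaves to the reader.
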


Taking complements in Remark \ref{remark.combinatorics} tells us the following:

\begin{rem}\phantomsection\label{remark.combinatoricsdual}

\begin{list}{(\roman{enumi}*)}{\usecounter{enumi}}
\addtocounter{enumi}{1}
\item The width of $\epsilon_k$ is $d-r+1$ for $k=0$, and $<d-r+1$ for $k>0$. The height of $\epsilon_k$ is $\leq r$ for all $k$.
\end{list}
\smallskip \noindent Also if the width of $\delta$ is $<d-r+1$ then:
\begin{list}{(\roman{enumi}*)}{\usecounter{enumi}}
\addtocounter{enumi}{2}
\item $s_K=d$.
\item  The height of $\epsilon_k$ is $r$ for $k<K$, and the height of $\epsilon_K$ is $<r$.
\item $\epsilon_K = \comp{d-r}{r-1}{\delta}$.
\end{list}
\smallskip \noindent If the width of $\delta$ is equal to $d-r+1$ then:
\begin{list}{(\roman{enumi}*)}{\usecounter{enumi}}
\addtocounter{enumi}{5}
\item  The height of $\epsilon_k$ is $\leq r-1$ for all $k$. 
\end{list}
\end{rem}

We end this section with some observations on the spaces of maps between various bundles on $\mfS$ and $\mfT$.

\begin{lem}\label{lemma.1dmappingspaces}
For any partition $\delta$, we have
\begin{enumerate}
\item
$$\Ext^0_{\mfS}(\bbS^\delta S, \bbS^\delta S) = \C $$
i.e. any map from this bundle to itself is a scalar multiple of the identity.
\item
$$\Ext^0_{\mfS}(\bbS^\delta H, \bbS^\delta S) = \C $$
i.e. any map between these two bundles is a scalar multiple of the tautological map.
\end{enumerate}
\end{lem}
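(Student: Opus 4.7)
My plan is to compute these Hom spaces by representation-theoretic invariants on the affine quotient stack, using a normality/codimension argument to justify working on the full ambient stack.

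First I would observe that $\mfS$ sits as an open substack inside the full affine quotient stack
\[\bar{\mfS} = [\Hom(V,H)\oplus \Hom(H,S)\;/\;\operatorname{GL}(H)\times\operatorname{GL}(S)],\]
with complement the locus where the map $H\to S$ fails to be an injection. After fixing bases this is the rank-$\leq r-2$ locus in $\Hom(H,S)$, which is a determinantal variety of codimension $(r-(r-2))(r-1-(r-2))=2$. Since $\bar{\mfS}$ is smooth (it is an affine space modulo a reductive group), pulling back along the open inclusion identifies global sections of a vector bundle on $\mfS$ with global sections on $\bar{\mfS}$ by normality, hence
\[\Ext^0_{\mfS}(\cE_1,\cE_2)=\Ext^0_{\bar{\mfS}}(\cE_1,\cE_2)\]
for any $\operatorname{GL}(H)\times\operatorname{GL}(S)$-equivariant locally free $\cE_i$.

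Second, since $\bar{\mfS}$ is an affine quotient, these Ext groups are simply the $\operatorname{GL}(H)\times\operatorname{GL}(S)$-invariants of the corresponding representation tensored with the coordinate ring. The coordinate ring splits via Cauchy's formula as
\[\C[\Hom(V,H)]\otimes\C[\Hom(H,S)]=\Bigl(\bigoplus_{\alpha}\bbS^\alpha V\otimes\bbS^\alpha H^\vee\Bigr)\otimes\Bigl(\bigoplus_{\beta}\bbS^\beta H\otimes\bbS^\beta S^\vee\Bigr).\]
For part (i), I need the invariants in $\bbS^\delta S^\vee\otimes\bbS^\delta S$ tensored with the above. The $\operatorname{GL}(H)$-invariants force $\alpha=\beta$ (the only invariants in $\bbS^\alpha H^\vee\otimes\bbS^\beta H$ occur when $\alpha=\beta$, and are one-dimensional), leaving us to compute the $\operatorname{GL}(S)$-invariants of $\bbS^\delta S^\vee\otimes\bbS^\delta S\otimes\bbS^\alpha S^\vee\otimes\bbS^\alpha V$. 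These are $\Hom_{\operatorname{GL}(S)}(\bbS^\delta S\otimes\bbS^\alpha S,\bbS^\delta S)\otimes\bbS^\alpha V$, and by Littlewood--Richardson this is nonzero only when $\alpha=\varnothing$ (otherwise $|\gamma|=|\delta|+|\alpha|>|\delta|$ for every summand $\bbS^\gamma S$ of $\bbS^\delta S\otimes\bbS^\alpha S$), in which case it is one-dimensional. Part (ii) is identical except the roles of $H$ and $S$ shift: $\operatorname{GL}(S)$-invariance forces $\beta=\delta$, and then $\operatorname{GL}(H)$-invariance forces $c^\delta_{\delta,\alpha}\ne0$, so again $\alpha=\varnothing$ and the space is $\C$.

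The only thing to verify is that the distinguished generator in part (ii) is a scalar multiple of the tautological map. This is immediate because the tautological map $\bbS^\delta H\to\bbS^\delta S$ induced by $H\hookrightarrow S$ is a nonzero element of the one-dimensional space we just computed. No step is really a serious obstacle; the only subtlety worth flagging is the normality argument, which requires checking that the unstable locus is codimension $\geq 2$ (and this would fail without the injectivity condition cutting out $\mfS$).
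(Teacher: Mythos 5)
Your proof is correct and takes essentially the same route as the paper's: both reduce to the ambient affine stack $\bar{\mfS}$ using the codimension-$\geq 2$/normality argument, and then compute $\operatorname{GL}(H)\times\operatorname{GL}(S)$-invariants of the relevant representation tensored with the coordinate ring, via the Cauchy decomposition and Littlewood--Richardson. The only difference is that the paper compresses the invariant computation into the phrase ``an easy calculation'' while you spell it out term-by-term, which is a welcome expansion rather than a different argument.
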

\begin{proof}
We can work on $\bar{\mfS}$ instead, since the complement of $\mfS$ has codimension $\geq 2$. Then for (i) we just have to compute the $\operatorname{GL}(H)\times \operatorname{GL}(S)$ invariants in 
$$\bbS^\delta S^\vee\otimes \bbS^\delta S \otimes \Sym\left( V\otimes H^\vee \oplus H\otimes S^\vee \right).$$
This is an easy calculation using the Littlewood-Richardson rule and the facts that for any vector spaces $A$ and $B$ we have
$$\Sym(A\otimes B) = \bigoplus_{\lambda, \mu}  \bbS^\lambda A \otimes  \bbS^\mu B $$
\cite[Theorem 2.3.2]{Weyman:2007wu} and
$$( \bbS^\lambda A^\vee \otimes  \bbS^\mu A)^{\operatorname{GL}(A)} = \left\{ \begin{array}{cc}
\C\hspace{1cm} & \lambda = \mu, \\  0\hspace{1cm} & \lambda\neq \mu. \end{array} \right. $$
Part (ii) is identical.
\end{proof}

\begin{lem}\label{lemma.1dmappingspaces2}
Let $\delta_0,\delta_1,\ldots$ be a sequence of Young diagrams constructed by Algorithm \ref{algorithm} above. Then for any $k$,
$$\Hom_{\mfT}\left( \bbS^{\delta_{k+1}}S^\vee \otimes \Wedge^{s_{k+1}} V,\; \bbS^{\delta_{k}}S^\vee \otimes \Wedge^{s_{k}} V\right)^{\operatorname{SL}(V)} = \C, $$
i.e. the maps in the sequence in Theorem \ref{theorem.freeresolutions} are determined (up to scalar multiples) by the requirement of $\operatorname{SL}(V)$-equivariance.
\end{lem}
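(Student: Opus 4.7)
The plan is to compute the Hom space directly by representation theory on the affine quotient stack $\mfT = [\Hom(V,S)/\operatorname{GL}(S)]$. Any $\operatorname{GL}(S)$-equivariant locally free Hom picks up the coordinate ring $\mathcal{O}(\Hom(V,S)) = \Sym(V \otimes S^\vee)$, which the Cauchy formula \cite[Theorem 2.3.2]{Weyman:2007wu} decomposes as $\bigoplus_\lambda \bbS^\lambda V \otimes \bbS^\lambda S^\vee$. So I would rewrite the space in question as a direct sum indexed by $\lambda$:
\[
\bigoplus_\lambda \bigl[\bbS^{\delta_{k+1}} S \otimes \bbS^{\delta_k} S^\vee \otimes \bbS^\lambda S^\vee\bigr]^{\operatorname{GL}(S)} \otimes \bigl[(\Wedge^{s_{k+1}} V)^\vee \otimes \Wedge^{s_k} V \otimes \bbS^\lambda V\bigr]^{\operatorname{SL}(V)},
\]
and show that exactly one summand contributes, with multiplicity one.

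For the $S$-factor, the Littlewood--Richardson rule \cite{Fulton:1996tk} identifies the invariants with the coefficient $c^{\delta_{k+1}}_{\delta_k, \lambda}$, which in particular forces $|\lambda| = s_{k+1} - s_k$. For the $V$-factor, the total $\operatorname{GL}(V)$-weight is $-s_{k+1} + s_k + |\lambda| = 0$, and in weight zero the $\operatorname{SL}(V)$-invariants coincide with the trivial $\operatorname{GL}(V)$-isotypic component. So I need $\Wedge^{s_{k+1}} V$ to embed in $\Wedge^{s_k} V \otimes \bbS^\lambda V$; by Pieri's rule this happens only when $\lambda = (1^{s_{k+1}-s_k})$ is a single column, and then with multiplicity one.

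The last step is to evaluate $c^{\delta_{k+1}}_{\delta_k, (1^{s_{k+1}-s_k})}$. Pieri again makes this equal to $1$ if $\delta_{k+1}/\delta_k$ is a vertical strip (no two boxes in any single row), and $0$ otherwise. But Algorithm \ref{algorithm} constructs $\delta_{k+1}$ from $\delta_k$ by adjoining boxes entirely within the $(k+1)^{\text{th}}$ column, so the skew shape is manifestly a vertical strip and the coefficient is $1$. Combining the two factors yields a one-dimensional invariant space, as claimed. There is no serious obstacle in any individual step; the main point is to coordinate the two Pieri-type conditions, observing that the $\operatorname{SL}(V)$-invariance on the $V$-side forces $\lambda$ to be a single column of exactly the length at which the column-by-column combinatorics of Algorithm \ref{algorithm} makes the $\operatorname{GL}(S)$-Littlewood--Richardson coefficient equal to $1$.
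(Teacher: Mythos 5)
Your proof is correct and follows the same route as the paper: decompose $\mathcal{O}(\mfT) = \Sym(V\otimes S^\vee)$ via the Cauchy formula, extract the $\operatorname{GL}(S)$-invariants to reduce to Littlewood--Richardson coefficients, and then impose $\operatorname{SL}(V)$-invariance. The only cosmetic difference is how you pin down $\lambda$: you use the $V$-side (Pieri for $\Wedge^{s_k}V$) to force $\lambda = (1^{s_{k+1}-s_k})$ and then verify that $c^{\delta_{k+1}}_{\delta_k,(1^{s_{k+1}-s_k})}=1$; the paper instead observes directly that $c^{\delta_{k+1}}_{\delta_k,\lambda}$ vanishes unless $\lambda$ is that column (because $\delta_{k+1}/\delta_k$ is a single-column vertical strip, so any Littlewood--Richardson filling must have column content), making the $V$-side check trivial. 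Both organizations rest on the same combinatorial fact about Algorithm \ref{algorithm} -- each step adds boxes in a single column -- and give the same one-dimensional answer, so there is no real gap here.
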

\begin{proof}
The calculation is very similar to the ones in Lemma \ref{lemma.1dmappingspaces}. Note that the Littlewood-Richardson coefficient $c_{\lambda, \delta_k}^{\delta_{k+1}}$ is zero unless $\lambda$ is the \quotes{column} $(1,1,\ldots,1)$ of height $s_{k+1}-s_k$, in which case it equals 1.
\end{proof}
We do not actually use Lemma \ref{lemma.1dmappingspaces2}, but it is interesting to note. It can also be considered a warm-up for the next lemma, which is more technical and is used in the proof of Lemma \ref{lemma.cotwisttransferproperty1}.
\begin{lem}\label{lemma.1dmappingspaces3}
Let $\delta$ have height $<r$ and width equal to $d-r+1$, and let $\delta_0,\delta_1,\ldots$ be the corresponding sequence of Young diagrams. As above, let $K=d-r+1$, and $\epsilon_K = \comp{d-r+1}{r}{\delta_K}$. Then 
$$\Ext^0_{\mfS}\left( \bbS^{\delta}H^\vee ,\; \bbS^{\epsilon_{K}}S\br{d-r}\otimes \Wedge^{s_{K}} V \right)^{\operatorname{SL}(V)} = \C.$$
\end{lem}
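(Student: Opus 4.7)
The plan is to reduce the computation to an invariant-theoretic calculation on the larger stack $\bar\mfS$, and then to evaluate it explicitly using Cauchy's identity and Pieri's rule.

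First, since $\mfS \subset \bar\mfS$ has complement of codimension $\geq 2$ (as in the proof of Lemma~\ref{lemma.1dmappingspaces}) and all bundles involved are locally free, I may replace $\mfS$ by $\bar\mfS = [\Hom(V,H) \oplus \Hom(H,S) / \operatorname{GL}(H) \times \operatorname{GL}(S)]$. On $\bar\mfS$, the global $\Hom$ between equivariant bundles is a space of invariant polynomial sections. Applying Cauchy
\[
\mathcal{O}(\bar\mfS) = \bigoplus_{\alpha,\beta} \bbS^\alpha V \otimes \bbS^\alpha H^\vee \otimes \bbS^\beta H \otimes \bbS^\beta S^\vee,
\]
the $\Hom$ space splits as a sum over $(\alpha,\beta)$ of a $\operatorname{GL}(H)$-invariant factor, a $\operatorname{GL}(S)$-invariant factor, and a residual $\bbS^\alpha V \otimes \wedge^{s_K} V$ carrying the remaining $\operatorname{SL}(V)$-action.

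Next, the $\operatorname{GL}(S)$-invariants of $\bbS^{\epsilon_K} S \otimes \bbS^\beta S^\vee$ force $\beta = \epsilon_K$ and are then one-dimensional. To impose $\operatorname{SL}(V)$-invariance on $\bbS^\alpha V \otimes \wedge^{s_K} V$, I use Pieri to decompose it into $\bbS^\mu V$ with $\mu$ obtained from $\alpha$ by adding $s_K$ boxes in distinct rows; $\operatorname{SL}(V)$-invariants are nonzero only when $\mu$ is rectangular of height $d$. Combining this with the height constraint $\operatorname{ht}(\alpha) \leq r-1$ (otherwise $\bbS^\alpha H^\vee = 0$) and the identity $s_K = d - h_K$ read off from Lemma~\ref{lemma.staircase_Young_diag_components}, a short case analysis forces the unique possibility $\mu = (1^d)$, $\alpha = (1^{h_K})$, with a one-dimensional contribution.

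With these choices of $\alpha$ and $\beta$ fixed, I am left to show that
\[
\bigl[\bbS^\delta H \otimes \wedge^{h_K} H^\vee \otimes \bbS^{\epsilon_K} H \otimes (\det H^\vee)^{d-r}\bigr]^{\operatorname{GL}(H)}
\]
is one-dimensional. Using the isomorphism $\wedge^{h_K} H^\vee \cong \wedge^{r-1-h_K} H \otimes \det H^\vee$ (valid since $\dim H = r-1$) together with the complement identity $\bbS^{\epsilon_K} H \otimes (\det H^\vee)^{d-r+1} = \bbS^{\epsilon_K'} H^\vee$, where $\epsilon_K' := \comp{d-r+1}{r-1}{\epsilon_K}$ (legitimate by Remark~\ref{remark.combinatoricsdual}(vi*)), the invariant is transformed via Pieri into the multiplicity of $\epsilon_K'$ in $\bbS^\delta H \otimes \wedge^{r-1-h_K} H$.

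The main obstacle is the combinatorial step: unpacking $\epsilon_K' = \comp{d-r+1}{r-1}{\comp{d-r+1}{r}{\delta_K}}$ through Lemma~\ref{lemma.staircase_Young_diag_components} and verifying that $\epsilon_K'$ is obtained from $\delta$ by adjoining exactly one box to each of the rows $h_K+1, h_K+2, \ldots, r-1$. This amounts to tracking the staircase construction carefully through a double complement. Once this identity is established, $\epsilon_K'$ is a valid Pieri summand of $\bbS^\delta H \otimes \wedge^{r-1-h_K} H$ with multiplicity one, and the invariant-theoretic book-keeping assembles the whole Hom space as one-dimensional, concluding the proof.
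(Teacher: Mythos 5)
Your proposal is correct and follows essentially the same strategy as the paper: pass to $\bar{\mfS}$ by the codimension-$\geq 2$ argument, expand $\cO(\bar{\mfS})$ by Cauchy, pin down the $\operatorname{Sym}(H\otimes S^\vee)$-side partition via orthogonality for $\operatorname{GL}(S)$, and then use the explicit description of the $\delta_k$ in Lemma~\ref{lemma.staircase_Young_diag_components} to finish. The one genuine difference is the order of operations: the paper computes $\operatorname{GL}(H)$-invariants next, reducing everything to a single Littlewood--Richardson coefficient $c_{\tilde\epsilon_0,\lambda}^{\tilde\epsilon_K}$, and only at the very end applies $\operatorname{SL}(V)$-invariance to $\Wedge^{h_K}V\otimes\Wedge^{s_K}V$. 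You instead impose $\operatorname{SL}(V)$-invariance early, which forces $\alpha=(1^{h_K})$ via the identity $s_K=d-h_K$ and the height bound $\operatorname{ht}(\alpha)\leq r-1$, and then you are left with a cleaner $\operatorname{GL}(H)$ computation (a single Pieri multiplicity). Both routes hinge on the same combinatorial content; yours arguably avoids carrying the sum over $\lambda$ through the $\operatorname{GL}(H)$ step, at the cost of a small case analysis on $m$.

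The step you flag as the remaining obstacle --- that $\epsilon_K'=\comp{d-r+1}{r-1}{\epsilon_K}$ is obtained from $\delta$ by adding one box to each of rows $h_K+1,\ldots,r-1$ --- does hold. Unpacking: since $\operatorname{width}(\delta)=K$, the first $h_K$ rows of $\delta$ all equal $K$, so by Lemma~\ref{lemma.staircase_Young_diag_components} one gets $\delta_K=(K,\ldots,K,K,\cpt{\delta}{h_K+1}+1,\ldots,\cpt{\delta}{r-1}+1)$ with $h_K+1$ leading $K$'s; complementing twice gives exactly $\epsilon_K'=(K,\ldots,K,\cpt{\delta}{h_K+1}+1,\ldots,\cpt{\delta}{r-1}+1)$ with $h_K$ leading $K$'s, which is $\delta$ with a vertical strip of size $r-1-h_K$ attached in rows $h_K+1,\ldots,r-1$. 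So the Pieri multiplicity is $1$ as you claim, and the proof closes. For a write-up you should spell out this verification rather than leaving it as a gap, but the argument is sound.
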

\begin{proof}
As in Lemma \ref{lemma.1dmappingspaces} we can work on $\bar{\mfS}$, and this is a computation of invariants. After taking $\operatorname{GL}(S)$-invariants, we are left with
\begin{equation}\label{eqn.afterGLSinvariants}\bigoplus_{\lambda} \bbS^{\delta} H\otimes \bbS^{\epsilon_K} H\otimes \bbS^{\lambda} H^\vee\br{d-r}\otimes \bbS^{\lambda}V\otimes \Wedge^{s_K} V.\end{equation}
Now consider the expression
\begin{eqnarray*}\bbS^{\delta} H\otimes \bbS^{\epsilon_K} H\otimes\bbS^\lambda H^\vee\br{d-r} &=& \bbS^{\delta} H\br{d-r+1}\otimes \bbS^{\epsilon_K} H\br{-1}\otimes\bbS^\lambda H^\vee \\ &=& \bbS^{\tilde{\epsilon}_0} H^\vee\otimes \bbS^{\tilde{\epsilon}_K} H \otimes\bbS^\lambda H^\vee
\end{eqnarray*}
where  $\tilde{\epsilon}_0= \comp{d-r+1}{r-1}{\delta}$, and $\tilde{\epsilon}_K$ is the diagram obtained from $\epsilon_K$ by adding on an extra column of height $r-1$, which is well-defined because the height of $\epsilon_K$ is $\leq r-1$ (Remark \ref{remark.combinatoricsdual}(vi*)). Then the $\operatorname{GL}(H)$-invariants in \eqref{eqn.afterGLSinvariants} are
\begin{equation}\label{eqn.afterGLHinvariants}\bigoplus_{\lambda} \bbS^\lambda V\otimes \Wedge^{s_K} V^{\oplus c_{\tilde{\epsilon}_0, \lambda}^{\tilde{\epsilon}_K} }.\end{equation}
Now let $h_K$ be the height of the $K^{\text{th}}$ column of $\delta$, and use Lemma \ref{lemma.staircase_Young_diag_components} to deduce that
$$\tilde{\epsilon}_K = (K-\delta^{r-1}, \ldots, K-\delta^{h_K+1},1,\ldots,1)$$
where the number of rows is $r-1$. But by definition
$$\tilde{\epsilon}_0 = (K-\delta^{r-1},\ldots, K-\delta^{h_K+1})$$
and so the Littlewood-Richardson coefficient $c_{\tilde{\epsilon}_0, \lambda}^{\tilde{\epsilon}_K}$ is equal to 1 if $\lambda$ is the column $(1,\ldots,1)$ of height $h_k$, and equal to 0 otherwise. Hence \eqref{eqn.afterGLHinvariants} is just
$$\Wedge^{h_K}V\otimes \Wedge^{s_K} V $$
and this contains a 1-dimensional space of $\operatorname{SL}(V)$-invariants, since $h_K+s_K=d$.
\end{proof}

\subsection{Locally-free resolutions}\label{section.requiredresolutions}
We now prove Theorem \ref{theorem.freeresolutions}, which turns out to be an extreme case of the twisted Lascoux resolution \cite[Section 6.1]{Weyman:2007wu}. Weyman gives this resolution implicitly: we present a Borel--Weil--Bott calculation that makes it explicit, as required for our purposes. We could not find this given in the literature, but note that \cite{Fonarev:2011tq} uses the same combinatorics to produce exact sequences on Grassmannians. We briefly review Weyman's construction, slightly modifying his language with the aim of providing a bridge between his account and our application.

Let $G$ be a linearly reductive group, $P$ a parabolic subgroup of $G$, and $T$ a vector space with a $G$-action. We also choose a subspace $U$ of $T$ with a compatible $P$-action. Consider then a diagram
\[\begin{tikzcd}[row sep=25pt] U\times_P G \drar{j} \dar &
\\
\Im j \rar[hook] & T \end{tikzcd} \]
where $j$ takes $(u,g) \longmapsto (gu)$. Say we are interested in obtaining resolutions of torsion sheaves on $T$, supported on $\Im j$, which are obtained as direct images under the map $j$: \cite{Weyman:2007wu} uses this setup to calculate syzygies on determinantal varieties, and it turns out to be what we require also. Following {\it loc. cit.}, Sections 5.1 and 5.4, we form a diagram as follows, factoring $j$ into an embedding $i$ and a flat projection $q$:
\begin{equation}\label{equation.Weyman_diagram}\begin{tikzcd}[row sep=25pt] U\times_P G \rar[hook]{i} \drar[swap]{j} & T\times G/P \dar{q} \rar{p} & G/P
\\
 & T \end{tikzcd} \end{equation}
Here $i$ takes $(u,g) \longmapsto (gu,gP)$.

The relevant result from Weyman is then:
\begin{thm}\emph{\cite[Theorem 5.4.1]{Weyman:2007wu}}\label{thm.Weyman} Take a vector bundle $\cE$ on $G/P$ induced from a representation of $P$, and assume that $j_* i^* p^* \cE$ is a sheaf on $T$ (i.e. it has no higher push-downs). Then this sheaf has a $G$-equivariant resolution given by $\mcF_\bullet$, where \[\mcF_k := \bigoplus_{j \geq 0} \RDerived^j q_* \left(\Wedge^{k+j} ((T/U)^\vee) \otimes p^* \cE \right).\] 
\end{thm}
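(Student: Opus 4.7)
\bigskip
\noindent\textbf{Proof plan for Theorem \ref{theorem.freeresolutions}.}

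The plan is to specialize Weyman's Theorem \ref{thm.Weyman} (which has just been recalled) to our geometric data, and then to compute the resulting complex explicitly via Borel--Weil--Bott. In Weyman's language I would take $G = \operatorname{GL}(V) \times \operatorname{GL}(S)$, $T = \Hom(V,S)$, and let $P \subset G$ be the parabolic of $\operatorname{GL}(S)$ stabilising a fixed $(r-1)$-dimensional subspace $H \subset S$ (with $\operatorname{GL}(V)$ left alone), so that $G/P = \Gr(r-1,S) \cong \P^{r-1}$. Then $U = \Hom(V,H) \subset T$ is $P$-stable, $U \times_P G = \mfS$, and the map in Weyman's setup is exactly our $j : \mfS \to \mfT$. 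I would take $\mathcal{E} = \bbS^\delta \tilde{H}^\vee$ on $G/P$, where $\tilde{H}$ is the tautological rank-$(r-1)$ subbundle. The hypothesis that $j_* i^* p^* \mathcal{E} = j_* \bbS^\delta H^\vee$ is an honest sheaf is precisely Lemma \ref{lemma.nohigherj_*}. Weyman's theorem therefore produces a $G$-equivariant resolution $\mathcal{F}_\bullet \to j_* \bbS^\delta H^\vee$ with
\[
\mathcal{F}_k \;=\; \bigoplus_{j \geq 0} \Wedge^{k+j} V \,\otimes\, H^j\!\bigl(G/P,\; L^{-(k+j)} \otimes \bbS^\delta \tilde{H}^\vee\bigr),
\]
where $L = S/\tilde{H}$ is the tautological quotient line bundle (using that $(T/U)^\vee = V \otimes L^{-1}$ and that both factors of the tensor product are pulled back from $G/P$, so $q_*$ reduces to cohomology on $G/P$).

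Next I would carry out the Borel--Weil--Bott calculation on $G/P = \P^{r-1}$. The equivariant bundle $L^{-n} \otimes \bbS^\delta \tilde{H}^\vee$ corresponds to a weight which, after shifting by $\rho$, takes the form
\[
(\delta^1 + r-1,\; \delta^2 + r-2,\; \ldots,\; \delta^{r-1} + 1,\; -n)
\]
(up to the standard sign conventions). For each value of $n = k+j$ Bott's algorithm either returns zero cohomology (if the shifted weight is singular) or a uniquely determined Schur power $\bbS^{?} S^\vee$ in a uniquely determined degree. This means that at most one $j$ contributes to each $\mathcal{F}_k$, so $\mathcal{F}_k$ is a single term of the form $\bbS^{\delta_k} S^\vee \otimes \Wedge^{s_k} V$ for some Young diagram $\delta_k$ and some integer $s_k = k + j_k$.

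The heart of the proof will be to match this output of Bott's algorithm term-by-term with Algorithm \ref{algorithm}. Concretely, I would track how the sorted sequence changes as $n$ increases from $0$: the entry $-n$ drifts leftward through the sequence, and each time it swaps past a neighbouring entry the Bott permutation picks up one transposition, which forces a jump in both the homological degree $j$ and the resulting Young diagram. A direct check shows that each such jump corresponds exactly to adding a column to the current diagram in the manner prescribed by Algorithm \ref{algorithm} (the column height being determined by which neighbour is passed), while the total $s_k = k + j_k$ records the total number of boxes added so far, matching the definition of $s_k$ in the algorithm. The termination at $k = K = d-r+1$ then falls out automatically: once the accumulated box count $s_k$ exceeds $d$ we have $\Wedge^{s_k} V = 0$, and the width and height constraints on $\delta$ ensure this happens precisely after $K$ steps (as recorded in Remark \ref{remark.combinatorics}(i)).

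The main obstacle is the bookkeeping in the last step: translating between the compact ``add a column'' description in Algorithm \ref{algorithm} and the explicit Bott permutation on the weight $(\delta^1+r-1,\ldots,\delta^{r-1}+1,-n)$. A clean way to handle this is to describe the staircase explicitly using Lemma \ref{lemma.staircase_Young_diag_components}, which lets one read off the $k^{\text{th}}$ column height directly from $\delta$ and verify that it matches the position at which $-n$ lands after sorting. Once this correspondence is nailed down the exactness and the identification of the last map with the counit $\mathcal{F}_0 \to j_* \bbS^\delta H^\vee$ are automatic from Weyman's construction, and the resolution is the one claimed.
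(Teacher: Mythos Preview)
Your plan (which is for Theorem~\ref{theorem.freeresolutions}, the application of the cited result Theorem~\ref{thm.Weyman}) is essentially the paper's own argument: specialise Weyman's setup with $G/P \cong \Gr(r-1,S)$ and $\mathcal{E} = \bbS^\delta H^\vee$, verify the sheaf hypothesis via Lemma~\ref{lemma.nohigherj_*}, and identify the terms $\mathcal{F}_k$ by a Borel--Weil--Bott computation that the paper packages as Lemma~\ref{lem.staircase}. The only differences are cosmetic: the paper takes $G=\operatorname{GL}(S)$ rather than $\operatorname{GL}(V)\times\operatorname{GL}(S)$, and in its weight conventions the last entry of $\alpha(i)$ is $+i$ rather than $-n$.
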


\begin{rem} The proof uses the commutativity of \eqref{equation.Weyman_diagram} and the projection formula to rewrite the sheaf in question as \[ j_* i^* p^* \cE = q_* i_* i^* p^* \cE = q_* ( \cO_{\Im i} \otimes p^* \cE) \]
and then evaluates this using the Koszul resolution of $\cO_{\Im i}$. The relevant spectral sequence simplifies because of equivariance considerations \cite[Section 5.2]{Weyman:2007wu}.
\end{rem}

\begin{rem} Note that the bundle $\cE$ on $G/P$ is the `twist' in the twisted Lascoux resolution noted above.\end{rem}

Now we are ready for:

\begin{proof}[Proof of Theorem \ref{theorem.freeresolutions}]
We take $G=\operatorname{GL}(S)$, choose an inclusion $H \into S$, and define $P$ as the parabolic preserving $H$. Then we take $T$ to be underlying vector space of our stack $\mfT$, i.e. $T:=\Hom(V,S)$ with its $G$-action. $T$ acquires a $P$-action, compatible with the natural $P$-action on $U:=\Hom(V,H)$.

We apply the construction explained above to obtain a diagram
\[\begin{tikzcd}[row sep=25pt]Z \rar[hook]{i} \drar[swap]{j} & \Hom(V,S)\times G/P \dar{q} \rar{p} & G/P
\\
 & \Hom(V,S) \end{tikzcd}\]
where $Z$ is $\Hom(V,H) \times_P G$.

We now just need to reinterpret this in our stacky language: in particular we relate $Z$ and our stack $\cS$ from Section \ref{section.righthand_square}. Firstly we have an equivalence of categories of sheaves on the variety $G/P$ and on the open substack of \[[\Hom(H,S) \; / \; GL(H)]\] where the map from $H$ to $S$ is an injection (these are two alternative descriptions of the projective space $\P^\vee S$). It follows that we have a similar equivalence relating $Z$ and the open substack of \[[\Hom(V,H)\oplus\Hom(H,S)\; / \;\operatorname{GL}(H)]\] where the map from $H$ to $S$ is an injection. Hence we see that working $G$-equivariantly with the morphism $j$ in \eqref{equation.Weyman_diagram} is just the same as working with the morphism $j : \cS \to \mfT$ as defined in Section \ref{section.righthand_square}.

Now we consider the representation $\bbS^\delta H^\vee$ of $P$. This induces a sheaf on $G/P$ and also on $\mfS$, as appears in the statement of Theorem \ref{theorem.freeresolutions}. We want then to resolve $j_* i^* p^* \bbS^\delta H^\vee$. Lemma \ref{lemma.nohigherj_*} gives that there are no higher push-down sheaves, so Theorem \ref{thm.Weyman} immediately gives us a $G$-equivariant resolution $\mcF_\bullet$ on $T=\Hom(V,S)$ where \[\mcF_k := \bigoplus_{j \geq 0} \RDerived^j q_* \left(\Wedge^{k+j} (V \otimes (S/H)^\vee) \otimes \bbS^\delta H^\vee\right).\]
This yields the required resolution on $\mfT$. Lemma \ref{lem.staircase} below shows that these pushforwards evaluate to the terms given in the statement of Theorem \ref{theorem.freeresolutions} in the Section \ref{section.righthand_square}.
\end{proof}

\begin{lem}\label{lem.staircase}For $0 \leq k \leq K:=d-r+1$ we have \[\mcF_k = \bbS^{\staircase{k}} S^\vee \otimes \Wedge^{s_{k}} V \] where $\staircase{k}$ and $s_k$ are defined in Algorithm \ref{algorithm}, and $\mcF_k = 0$ otherwise.

\begin{proof} Rearranging to give \[\mcF_k := \bigoplus_{i \geq k} \RDerived^{i-k}q_* \left((S/H)^{\vee i} \otimes \bbS^\delta H^\vee\right)\otimes \wedge^i V\] it suffices to work fibrewise and evaluate \[H^\bullet \left(\operatorname{GL}(S)/P, \:(S/H)^{\vee i} \otimes \bbS^\delta H^\vee\right).\] We explain how to calculate this cohomology group explicitly using the Borel--Weil--Bott theorem \cite[Section 4.1]{Weyman:2007wu}. According to the standard prescription (see for example {\it loc. cit.}, Corollary 4.1.9) the bundle in question corresponds to a $\operatorname{GL}(S)$-weight \[\alpha(i) := (\cpt{\delta}{1}, \ldots, \cpt{\delta}{r-1}, i).\] First we note that if $\alpha=\alpha(i)$ is \emph{dominant} (i.e. given by a sequence of non-increasing integers) then we have $H^0 = \bbS^\alpha S^\vee$ and $H^{>0} = 0$. More generally, the theorem determines the cohomology, which occurs in at most one degree, according to the behaviour of the weight $\alpha$ under the twisted action of the Weyl group $W=S_r$. For $w \in S_r$ this action is given by \[w \bullet \alpha := w(\alpha + \rho)-\rho\] where $\rho:=(r,\ldots,2,1)$. We say that $\alpha$ is \emphasis{regular} if there exists a unique $w \in S_r$ such that $w \bullet \alpha$ is dominant. We then have three mutually exclusive cases, with the theorem giving the cohomology in each:

\begin{tabular}{llll}
(1) & $\alpha$ dominant & $\so$ & $H^0 = \bbS^\alpha S^\vee$ \\
(2) & $\alpha$ regular, non-dominant & $\so$ & $H^l = \bbS^{w \bullet \alpha} S^\vee$, \:$l = \operatorname{length}(w)$ \\
(3) & $\alpha$ non-regular & $\so$ & $H^\bullet = 0$
\end{tabular}

As $i$ varies we classify the weight $\alpha(i)$ as follows:

\begin{itemize}
\item \case{1} $\alpha(i)$ \emphasis{dominant} if $0 \leq i \leq \cpt{\delta}{r-1}$.

This is immediate: dominant $\operatorname{GL}(S)$-weights correspond precisely to non-increasing integer sequences.

\item \case{2} $\alpha(i)$ \emphasis{regular, non-dominant} if there exists a natural number $l \leq r-1$ such that \begin{equation}\label{eqn.case_2_condition}\cpt{\delta}{r-l} < i - l \leq \cpt{\delta}{r-l-1}.\end{equation} (Here for convenience we set $\cpt{\delta}{0}=\infty$ so that when $l=r-1$ the second inequality is redundant.)

In this case the cycle $w=(r-l \: \: \ldots \: \: r)$ gives \[w \bullet \alpha(i) = (\cpt{\delta}1, \ldots, \cpt{\delta}{r-l-1}, i-l, \cpt{\delta}{r-l}+1, \ldots, \cpt{\delta}{r-1}+1)\] which is dominant by the condition \eqref{eqn.case_2_condition}.

Now the crucial point is to observe that in fact $w \bullet \alpha(i) = \staircase{i-l}$, one of the Young diagrams obtained by applying Algorithm \ref{algorithm}. This follows from the description of the $\staircase{k}$ in Lemma \ref{lemma.staircase_Young_diag_components} after noting that the height of the $(i-l)^{\text{th}}$ column of $w \bullet \alpha(i)$ is given by $r-l-1$  by the condition \eqref{eqn.case_2_condition}.

We also observe that $s_{i-l}=i$, because $w \bullet \alpha(i) = \staircase{i-l}$ has the same number of boxes as $\alpha(i)$, which is $i$ more than the number of boxes in $\delta$. 

\item \case{3} $\alpha(i)$ \emphasis{non-regular} if there exists a natural number $l < r-1$ such that \begin{equation}\label{eqn.case_3_condition} i-l = \cpt{\delta}{r-l}.\end{equation}

In this case the transposition exchanging $r-l$ and $r$ stabilises $\alpha(i)$ under the twisted Weyl group action, because \[(\alpha(i)+\rho)^{r-l} := \cpt{\delta}{r-l} + l + 1 = i + 1 =: (\alpha(i)+\rho)^r \] by the condition \eqref{eqn.case_3_condition}.
\end{itemize}

In summary we have that if $\alpha(i)$ is regular (the first two cases), then there exists a (possibly trivial) Weyl group element $w(i)$ with length $l(i)$ such that

\begin{enumerate}
\item $w(i)\bullet \alpha(i) = \staircase{i-l(i)}$,
\item$s_{i-l(i)} = i$.
\end{enumerate}

We see then that $\alpha(i)$ contributes to $\mcF_\bullet$ via homology in degree $l(i)$ and thence to $\mcF_k$ when $i-k=l(i)$. This occurs precisely when $k=i-l(i)$, and for each $k$ this equation has a unique solution for $i$, as non-uniqueness would contradict $s_{i-l(i)} = i$. Hence we deduce that
\[\mcF_k = \RDerived^{l(i)}q_* \left((S/H)^{\vee i} \otimes \bbS^\delta H^\vee\right) \otimes \wedge^i V.\]

The required push-down then comes from
\begin{align*}H^{l(i)} \left(\operatorname{GL}(S)/P, \:(S/H)^{\vee i} \otimes \bbS^\delta H^\vee\right) & = \bbS^{w(i) \bullet \alpha(i)} S^\vee \\
& = \bbS^{\staircase{i-l(i)}}  S^\vee \\
& = \bbS^{\staircase{k}}  S^\vee \end{align*} and noting that $i = s_{i-l(i)} = s_k$ gives the result.\end{proof}
\end{lem}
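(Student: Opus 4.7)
The strategy is to reduce the claim to a Borel--Weil--Bott calculation. Since $q : T \times G/P \to T$ is trivial and $T$ is affine, the derived pushforward $\RDerived^j q_*$ computes cohomology on $G/P$ fibrewise. The crucial simplification is that $S/H$ has dimension one, so $(S/H)^\vee$ is a line bundle on $G/P$ and
\[
\Wedge^{k+j}\bigl(V \otimes (S/H)^\vee\bigr) \iso \Wedge^{k+j} V \otimes (S/H)^{\vee \, (k+j)}.
\]
Setting $i := k + j$, the definition of $\mcF_k$ becomes
\[
\mcF_k = \bigoplus_{i \geq k} H^{i-k}\bigl(G/P,\; (S/H)^{\vee\,i} \otimes \bbS^\delta H^\vee\bigr) \otimes \Wedge^i V,
\]
so the whole lemma reduces to evaluating these individual cohomology groups.

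Next I would apply Borel--Weil--Bott to the $P$-representation $(S/H)^{\vee\,i} \otimes \bbS^\delta H^\vee$, which corresponds to the $\operatorname{GL}(S)$-weight $\alpha(i) := (\cpt{\delta}{1}, \ldots, \cpt{\delta}{r-1}, i)$. Because $\delta$ is a partition, the first $r-1$ entries of $\alpha(i) + \rho$ (with $\rho = (r, r-1, \ldots, 1)$) are automatically strictly decreasing, so the regularity of $\alpha(i)+\rho$ and the sorting permutation depend only on where the last entry $i+1$ falls among them. This yields a trichotomy: $\alpha(i)$ dominant (for $0 \leq i \leq \cpt{\delta}{r-1}$), regular non-dominant (when $\cpt{\delta}{r-l} < i - l \leq \cpt{\delta}{r-l-1}$ for some $l$), or non-regular (when $i-l$ coincides with some $\cpt{\delta}{r-l}$, producing a repeat in $\alpha(i)+\rho$ and killing the cohomology).

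The core computation is then to sort $\alpha(i) + \rho$ by the cyclic shift $w = (r-l\; r-l+1\; \cdots\; r)$ of length $l$. A direct calculation gives
\[
w \bullet \alpha(i) = (\cpt{\delta}{1}, \ldots, \cpt{\delta}{r-l-1}, \, i - l, \, \cpt{\delta}{r-l} + 1, \ldots, \cpt{\delta}{r-1} + 1),
\]
and the identification with $\staircase{i-l}$ then follows from Lemma \ref{lemma.staircase_Young_diag_components}: the regularity condition is exactly equivalent to the statement that the height $h_{i-l}$ of the $(i-l)$-th column of $\delta$ equals $r-l-1$. Box-counting on both sides yields $s_{i-l} = i$. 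The main subtle step will be keeping these two parallel conventions (the Weyl-twisted weight data and the inductive column-by-column build-up of Algorithm \ref{algorithm}) aligned; everything else is routine verification.

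The final bookkeeping is then straightforward. For each $i$ with $\alpha(i)$ regular, the cohomology lives only in degree $l = l(i)$, so it contributes only to $\mcF_k$ with $k := i - l(i)$; the relation $s_k = i$ shows this assignment is injective, so each $\mcF_k$ picks up at most one summand $\bbS^{\staircase{k}} S^\vee \otimes \Wedge^{s_k} V$. To conclude that $\mcF_k$ vanishes for $k > K = d-r+1$, one checks that the column-height argument above forces $k \leq K$ whenever $\alpha(i)$ is regular; for $k$ below this bound every staircase $\staircase{k}$ actually arises, giving the claimed formula.
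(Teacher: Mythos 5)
Your argument follows the paper's proof essentially line-for-line: the rearrangement using the fact that $S/H$ is a line bundle, the Borel--Weil--Bott trichotomy on the weight $\alpha(i)$, the identification of $w\bullet\alpha(i)$ with $\delta_{i-l}$ via Lemma \ref{lemma.staircase_Young_diag_components}, the box-count $s_{i-l}=i$, and the final bookkeeping are all as in the paper. The one place you go wrong is the last step, the vanishing of $\mcF_k$ for $k > K$. You claim that regularity of $\alpha(i)$ forces $k = i - l(i) \leq K$, but this is false: for $i$ sufficiently large (e.g.\ $i+1 > \delta^1 + r$) the weight $\alpha(i)+\rho$ has all distinct entries, so $\alpha(i)$ is regular with $l(i)=r-1$ and $k = i-(r-1)$, which can exceed $K$ without bound. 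The actual mechanism killing $\mcF_k$ for $k > K$ is not in the Weyl-group combinatorics but in the factor $\Wedge^{s_k}V$: by Remark \ref{remark.combinatorics}(i) we have $s_k > d$ once $k > K$, so $\Wedge^{s_k}V = \Wedge^i V = 0$ since $\dim V = d$. The formula you derive therefore still applies for all $k$, and the vanishing for $k > K$ is a consequence of that formula, not of a restriction on which $k$ appear.
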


\begin{eg}We illustrate in Figure \ref{fig.staircase-proof} how the 3 cases in the proof of Lemma \ref{lem.staircase} occur in the example $\delta=(3,1)$ with $r=3$. We give diagrams corresponding to the $\operatorname{GL}(S)$-weights $\alpha(0), \ldots, \alpha(5)$, with the row lengths of the diagrams corresponding to components of the respective weights. Note that in this example $\alpha(6)$, $\alpha(7)$, $\ldots$ are regular with $l=2$.\end{eg}

\begin{figure}[h]
\begin{center}
\begin{tikzpicture}[auto,>=latex',scale=0.5]
\AnnotatedYoungDiag{alpha_0}{0}{0}{5}{3}{{3,1}}{$\delta=\alpha(0)$}{(dominant)}{2}
\AnnotatedYoungDiag{alpha_1}{0}{-4}{5}{3}{{3,1,1}}{$\alpha(1)$}{(dominant)}{2}
\AnnotatedYoungDiag{alpha_2}{0}{-8}{5}{3}{{3,1,2}}{$\alpha(2)$}{(non-regular, $l=1$)}{2}
\AnnotatedYoungDiag{alpha_3}{0}{-12}{5}{3}{{3,1,3}}{$\alpha(3)$}{(regular, $l=1$)}{2}
\AnnotatedYoungDiag{alpha_4}{0}{-16}{5}{3}{{3,1,4}}{$\alpha(4)$}{(regular, $l=1$)}{2}
\AnnotatedYoungDiag{alpha_5}{0}{-20}{5}{3}{{3,1,5}}{$\alpha(5)$}{(non-regular, $l=2$)}{2}
\end{tikzpicture}
\end{center}
\caption{Cases arising for $\alpha(i)$ in proof of Lemma \ref{lem.staircase}.}
\label{fig.staircase-proof}
\end{figure}
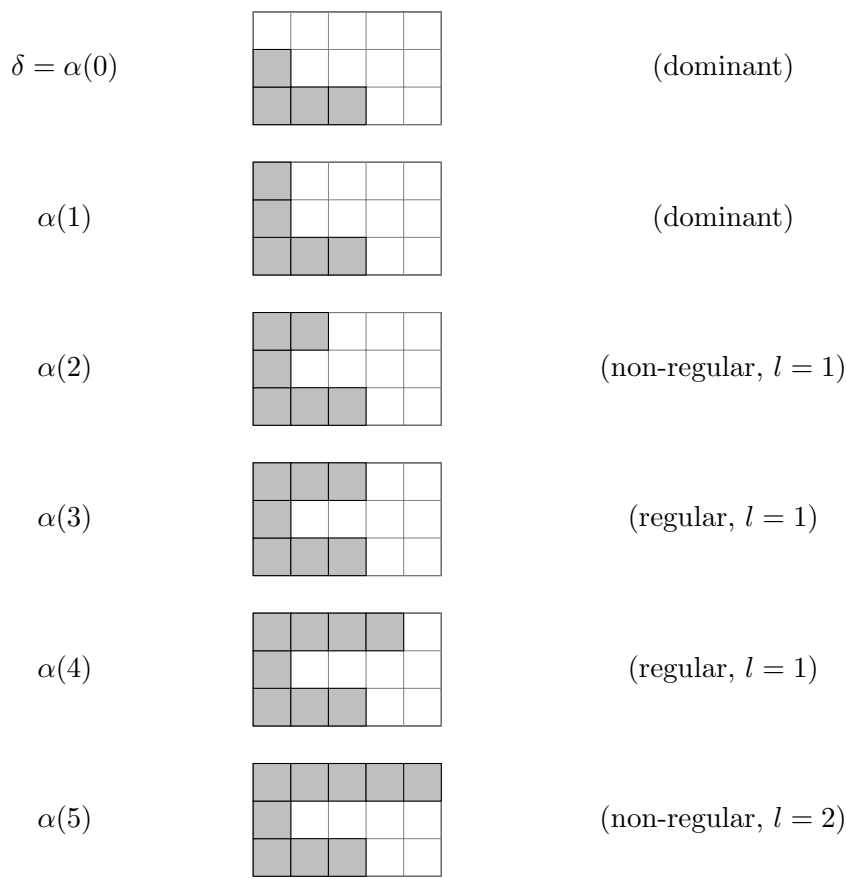




\end{document}